\documentclass{amsproc}

\usepackage{amssymb}
\usepackage{amsbsy}
\usepackage{amscd}
\usepackage{amsmath}
\usepackage{amsthm}
\usepackage[mathscr]{eucal}
\usepackage[all]{xy}
\usepackage[colorlinks,plainpages,backref,urlcolor=blue]{hyperref}

\setlength\textwidth{312pt}
\setlength\textheight{504pt}
\setlength\topmargin{20mm}
\setlength\evensidemargin{20mm}
\setlength{\parindent}{18pt}

\newtheorem{theorem}{Theorem}[section]
\newtheorem{lemma}[theorem]{Lemma}
\newtheorem{corollary}[theorem]{Corollary}
\newtheorem{prop}[theorem]{Proposition}

\theoremstyle{definition}
\newtheorem{definition}[theorem]{Definition}
\newtheorem{example}[theorem]{Example}
\newtheorem{remark}[theorem]{Remark}

\newcommand{\Z}{\mathbb{Z}}
\newcommand{\Q}{\mathbb{Q}}
\newcommand{\R}{\mathbb{R}}
\newcommand{\C}{\mathbb{C}}

\newcommand{\QP}{\mathbb{QP}}
\newcommand{\CP}{\mathbb{CP}}
\newcommand{\RP}{\mathbb{RP}}
\newcommand{\bP}{\mathbb{P}}
\renewcommand{\AA}{\mathbf{A}}

\newcommand{\sV}{\mathsf{V}}
\newcommand{\sW}{\mathsf{W}}
\newcommand{\sE}{\mathsf{E}}
\newcommand{\cS}{\mathsf{p}}

\newcommand{\G}{\Gamma}

\renewcommand{\k}{\Bbbk}
\newcommand{\RR}{\mathcal{R}}
\newcommand{\VV}{\mathcal{V}}
\newcommand{\A}{{\mathcal{A}}}

\renewcommand{\L}{{\mathcal{L}}}
\newcommand{\CC}{{\mathcal{C}}}
\newcommand{\WW}{\mathcal{W}}

\DeclareMathOperator{\rank}{rank}
\DeclareMathOperator{\gr}{gr}
\DeclareMathOperator{\im}{im}

\DeclareMathOperator{\codim}{codim}

\DeclareMathOperator{\id}{id}
\DeclareMathOperator{\ab}{{ab}}

\DeclareMathOperator{\Sym}{Sym}
\DeclareMathOperator{\ch}{char}

\DeclareMathOperator{\Hom}{{Hom}}

\DeclareMathOperator{\ev}{ev}

\DeclareMathOperator{\FF}{{F}}

\DeclareMathOperator{\init}{in}
\DeclareMathOperator{\lk}{lk}

\DeclareMathOperator{\dire}{dir}

\DeclareMathOperator{\Grass}{Gr}

\DeclareMathOperator{\TC}{TC}

\DeclareMathOperator{\linn}{\,lin}

\newcommand{\PL}{\scriptscriptstyle{\rm PL}}

\newcommand{\wG}{\widehat{G}}

\newcommand{\same}{\Longleftrightarrow}
\newcommand{\surj}{\twoheadrightarrow}
\newcommand{\inj}{\hookrightarrow}

\newcommand{\isom}{\xrightarrow{\simeq}}
\newcommand{\compl}{\scriptscriptstyle{\complement}}

\newcommand{\abs}[1]{\left| #1 \right|}

\def\set#1{{\{\, #1 \,\}}}

\newcommand{\bigmid}{\:\big|  \big.\:}
\newcommand{\pt}{\{\text{pt}\}}

\newenvironment{alphenum}
{

\begin{enumerate}}{\end{enumerate}}

\title[Resonance varieties and Dwyer--Fried invariants]%
{Resonance varieties and Dwyer--Fried invariants}

\author[Alexander~I.~Suciu]{Alexander~I.~Suciu}

\address{Department of Mathematics,
Northeastern University,
Boston, MA 02115, USA}
\urladdr{http://www.math.neu.edu/\~{}suciu}
\email{a.suciu@neu.edu}

\thanks{Partially supported by NSA grant H98230-09-1-0021 
and NSF grant DMS--1010298}

\subjclass[2000]{Primary 
20J05,  
55N25; 
Secondary 
14F35, 
32S22,  
55R80,  
57M07.  
}

\keywords{Free abelian cover, characteristic variety, resonance 
variety, tangent cone, Dwyer--Fried set, special 
Schubert variety, toric complex, 
K\"{a}hler manifold, hyperplane arrangement}

\setcounter{tocdepth}{1}

\begin{document}

\begin{abstract}
The Dwyer--Fried invariants of a finite cell complex $X$ 
are the subsets $\Omega^i_r(X)$ of the Grassmannian of 
$r$-planes in $H^1(X,\Q)$ which parametrize the 
regular $\Z^r$-covers of $X$ having finite 
Betti numbers up to degree~$i$. In previous 
work, we showed that each $\Omega$-invariant 
is contained in the complement of a union of Schubert 
varieties associated to a certain subspace arrangement 
in $H^1(X,\Q)$. Here, we identify a class of spaces for 
which this inclusion holds as equality.  For such 
``straight" spaces $X$, all the data required to compute 
the $\Omega$-invariants can be extracted from the 
resonance varieties associated to the cohomology 
ring $H^*(X,\Q)$. In general, though, translated components 
in the characteristic varieties affect the answer. 
\end{abstract}

\maketitle
\tableofcontents

\section{Introduction}
\label{sec:intro}

One of the most fruitful ideas to arise from arrangement 
theory is that of turning the cohomology ring of a space 
into a family of cochain complexes, parametrized by   
the cohomology group in degree $1$, and extracting certain 
varieties from these data, as the loci where 
the cohomology of those cochain complexes jumps.  
What makes these ``resonance" varieties really useful is their 
close connection with a different kind of jumping 
loci: the ``characteristic" varieties, which record the 
jumps in homology with coefficients in rank $1$ local systems. 

In this paper, we use the geometry of the cohomology 
jump loci to study a classical problem in topology: 
determining which infinite covers of a 
space have finite Betti numbers.  Restricting our 
attention to regular, free abelian covers of a fixed rank 
allows us to state the problem in terms of a suitable filtration 
on the rational Grassmannian. Under favorable circumstances, 
the finiteness of the Betti numbers of such covers 
is exclusively controlled by the incidence varieties to the 
resonance varieties of our given space.

\subsection{Cohomology jump loci and straightness}
\label{subsec:intro straight}

Let $X$ be a connected CW-complex with finite skeleta.  
To such a space, we associate two types of jump loci. 
The first are the {\em resonance varieties}\/ $\RR^i(X)$.   
These are homogeneous subvarieties of the affine space 
$H^1(X,\C)=\C^n$, where $n=b_1(X)$, and they are defined in 
terms of the cohomology algebra $A=H^*(X,\C)$, as follows. 
For each $a\in A^1$, left-multiplication by $a$ defines a 
cochain complex $(A,\cdot a)$.  Then 
\begin{equation}
\label{eq:intro res}
\RR^i(X)=\{a \in H^1(X,\C) \mid 
 H^j(A,\cdot a) \ne 0, \text{ for some $j\le i$}\}. 
\end{equation}

The second type of jump loci we consider here are the 
{\em characteristic varieties}\/ $\WW^i(X)$. These are 
Zariski closed subsets of the complex algebraic torus  
$H^1(X,\C^{\times})^0=\Hom(\pi_1(X),\C^{\times})^0=(\C^{\times})^n$, 
defined as follows. Each character $\rho\colon \pi_1(X)\to \C^{\times}$ 
gives rise to a rank~$1$ local system on $X$, call it $\L_{\rho}$.  
Then 
\begin{equation}
\label{eq:intro cv}
\WW^i(X)=\{\rho \in H^1(X,\C^{\times})^0 \mid 
H_j(X, \L_{\rho})\ne 0, \text{ for some $j\le i$}\}. 
\end{equation}

One of our goals in this paper is to isolate a class of spaces 
for which the resonance and characteristic varieties 
have a rather simple nature, and are intimately related 
to each other. 

We say that $X$ is {\em locally $k$-straight}\/ if, for each 
$i\le k$, all components of $\WW^{i}(X)$ passing through 
the origin $1$ are algebraic subtori, and the tangent cone 
at $1$ to $\WW^i(X)$ equals $\RR^{i}(X)$. If, moreover, 
all positive-dimensional components of $\WW^{i}(X)$ 
contain the origin, we say $X$ is {\em $k$-straight}. 
For locally straight spaces, the resonance varieties $\RR^i(X)$
are finite unions  of rationally defined linear subspaces. 

Examples of straight spaces include Riemann surfaces, 
tori, and knot complements. Under some further assumptions, 
the straightness properties behave well with respect 
to finite direct products and wedges.

A related notion is Sullivan's $k$-formality.  Using the 
tangent cone formula from \cite{DPS-duke}, 
it is readily seen that $1$-formal spaces are 
locally $1$-straight.  In general though, $1$-formality  
does not imply $1$-straightness, and the converse 
does not hold, either.

\subsection{Dwyer--Fried invariants}
\label{subsec:intro df}

The second goal of this paper is to analyze the homological 
finiteness properties of all regular, free abelian covers of a 
given space $X$, and relate these properties to the resonance 
varieties of $X$, under a straightness assumption. 

The connected, regular $\Z^r$-covers $\hat{X}\to X$ are 
parametrized by the Grassmannian of $r$-planes in the 
vector space $H^1(X,\Q)$. Moving about this variety, and 
recording when all the Betti numbers 
$b_1(\hat{X}),\dots, b_i(\hat{X})$ are finite defines subsets 
\begin{equation}
\label{eq:intro df}
\Omega^i_r(X)\subset \Grass_r(H^1(X,\Q)),
\end{equation}
which we call the {\em Dwyer-Fried invariants}\/ of $X$. 
These sets depend only on the homotopy type of $X$. 
Consequently, if $G$ is a  finitely generated group, 
the sets $\Omega^i_r(G):=\Omega^i_r(K(G,1))$ 
are well-defined.

In \cite{DF}, Dwyer and Fried showed that the support 
varieties of the Alexander invariants of a finite cell 
complex $X$ completely determine the $\Omega$-sets of $X$.  
In \cite{PS-plms} and \cite{Su}, this foundational 
result was refined and reinterpreted in terms of the 
characteristic varieties of $X$, as follows.  Let $\exp\colon 
H^1(X,\C)\to H^1(X,\C^{\times})$ be the coefficient 
homomorphism induced by the exponential map 
$\exp\colon \C\to \C^{\times}$. Then, 
\begin{equation}
\label{eq:intro grass cv}
\Omega^i_r(X)=\big\{P\in \Grass_r(\Q^n) \bigmid
\dim_{\C} ( \exp(P \otimes \C) \cap \WW^i(X)) = 0 \big\}.
\end{equation}

We pursue this study here, by investigating the relationship 
between the Dwyer--Fried sets and the resonance varieties.  
Given a homogeneous variety $V\subset \k^n$, let 
$\sigma_r(V)\subset \Grass_r(\k^n)$ be the variety of 
$r$-planes incident to $V$.  Our main result reads as follows. 

\begin{theorem}
\label{thm:intro main}
Let $X$ be a connected CW-complex with finite $k$-skeleton. 
\begin{enumerate}
\item \label{main1}
Suppose $X$ is locally $k$-straight. Then, for all $i\le k$ 
and $r\ge 1$, 
\[
\Omega^i_r(X) \subseteq \Grass_r(H^1(X,\Q)) \setminus 
\sigma_r(\RR^i(X,\Q)).
\]
\item \label{main2}
Suppose $X$ is $k$-straight. Then, for all $i\le k$ and $r\ge 1$,
\[
\Omega^i_r(X) = \Grass_r(H^1(X,\Q)) \setminus 
\sigma_r(\RR^i(X,\Q)).
\]
\end{enumerate}
\end{theorem}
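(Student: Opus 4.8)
The plan is to combine the reinterpretation \eqref{eq:intro grass cv} of the $\Omega$-sets in terms of $\WW^i(X)$ with the structural information that straightness provides. Throughout, write $P_\C=P\otimes\C$ for a rational plane $P$. By \eqref{eq:intro grass cv}, a plane $P$ lies \emph{outside} $\Omega^i_r(X)$ exactly when $\dim_\C\bigl(\exp(P_\C)\cap\WW^i(X)\bigr)>0$, so both inclusions amount to comparing this condition with incidence of $P$ to $\RR^i(X)$. First I would record the shape of $\WW^i(X)$ near the origin: by local $k$-straightness, for $i\le k$ the components of $\WW^i(X)$ through $1$ are algebraic subtori $T_1,\dots,T_m$, and writing $T_j=\exp(\mathfrak{t}_j)$ with $\mathfrak{t}_j=\Lie(T_j)$ a rationally defined subspace of $H^1(X,\C)$, the tangent cone equality gives $\RR^i(X,\C)=\mathfrak{t}_1\cup\cdots\cup\mathfrak{t}_m$. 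Thus $P$ is incident to $\RR^i(X,\Q)$ precisely when $P_\C\cap\mathfrak{t}_j\ne\{0\}$ for some $j$, and since $P$ and $\mathfrak{t}_j$ are both defined over $\Q$, such a nonzero intersection already occurs over $\Q$.

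The one genuinely computational ingredient is a dimension formula for intersections of subtori. Because $P$ is rational, $\exp(P_\C)$ is again an algebraic subtorus, with $\Lie(\exp(P_\C))=P_\C$; the identity component of $\exp(P_\C)\cap T_j$ is then a subtorus whose Lie algebra is $P_\C\cap\mathfrak{t}_j$. Hence $\dim_\C\bigl(\exp(P_\C)\cap T_j\bigr)=\dim_\C(P_\C\cap\mathfrak{t}_j)$, and in particular this intersection is positive-dimensional if and only if $P_\C\cap\mathfrak{t}_j\ne\{0\}$.

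With these facts in hand, part \eqref{main1} is the one-directional implication and uses only local straightness. If $P\in\sigma_r(\RR^i(X,\Q))$, then $P_\C$ meets some $\mathfrak{t}_j$ nontrivially, so $\exp(P_\C)\cap T_j$ is positive-dimensional; as $T_j\subseteq\WW^i(X)$, the intersection $\exp(P_\C)\cap\WW^i(X)$ is positive-dimensional, and therefore $P\notin\Omega^i_r(X)$. For part \eqref{main2} I would establish the reverse inclusion, and this is where full $k$-straightness is needed: it forces every positive-dimensional component of $\WW^i(X)$ to pass through $1$, hence to be one of the $T_j$, leaving only finitely many isolated points as the remaining components. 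So if $P\notin\Omega^i_r(X)$, then $\exp(P_\C)\cap\WW^i(X)$ is positive-dimensional; the isolated points contribute only a finite set, so $\exp(P_\C)\cap T_j$ must be positive-dimensional for some $j$, whence $P_\C\cap\mathfrak{t}_j\ne\{0\}$ and $P\in\sigma_r(\RR^i(X,\Q))$.

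The main obstacle, and the reason equality can fail under merely local straightness, is the control of the \emph{translated} components of $\WW^i(X)$: positive-dimensional components that do not pass through the origin. These are invisible to the tangent cone $\RR^i(X)$, yet they can render $\exp(P_\C)\cap\WW^i(X)$ positive-dimensional for planes $P$ disjoint from $\RR^i(X,\Q)$, which would break the reverse inclusion. The hypothesis of $k$-straightness is precisely what eliminates such components, forcing the off-origin components down to points, so that the clean subtorus dimension formula can be applied; verifying that this formula descends correctly to $\Q$ is the other point that requires care.
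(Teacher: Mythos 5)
Your proof is correct and follows essentially the same route as the paper: both arguments rest on the characterization of $\Omega^i_r(X)$ as the set of planes $P$ for which $\exp(P\otimes\C)\cap\WW^i(X)$ is finite (Theorem \ref{thm:df cv}), combined with the structural decomposition of $\WW^i(X)$ under (local) straightness into rationally defined subtori through the origin plus components missing the origin, which is precisely the content of Theorem \ref{thm:alt straight} and Corollary \ref{cor:rat res}. The only difference is organizational: where the paper cites Theorem \ref{thm:sch bound} (proved via the exponential tangent cone $\tau_1$) and Theorem \ref{thm:alt straight}, you inline those arguments directly, using the Lie-algebra dimension formula for intersections of subtori and rationality descent---the same facts that underlie the paper's auxiliary results.
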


As a consequence, if $X$ is $k$-straight, then 
each set $\Omega^i_r(X)$ with $i\le k$ is the 
complement of a finite union of special Schubert 
varieties in the Grassmannian of $r$-planes in $\Q^n$, 
where $n=b_1(X)$. In particular, $\Omega^i_r(X)$ is 
a Zaris\-ki open subset of $\Grass_r(\Q^n)$.

\subsection{Applications}
\label{subsec:intro apps} 

We illustrate our techniques with a broad variety of 
examples, coming from low-dimensional 
topology, toric topology, algebraic geometry, 
and the theory of hyperplane arrangements. 

One class of spaces for which things work out very 
well are the toric complexes.  Every simplicial 
complex $K$ on $n$ vertices determines a subcomplex 
$T_K$ of the $n$-torus, with fundamental group the 
right-angled Artin group associated 
to the $1$-skeleton of $K$.  It turns out that all 
toric complexes are straight (that is, $k$-straight, 
for all $k$).  As shown by Papadima and Suciu in \cite{PS-adv}, 
the resonance varieties of a toric complex are unions 
of coordinate subspaces, which can be read off 
directly from the corresponding simplicial complex.  
This leads to some explicit formulas for the 
$\Omega$-sets of toric complexes and 
right-angled Artin groups. 

The characteristic varieties of (quasi-) K\"{a}hler manifolds 
are fairly well understood, due to work of Beauville, 
Green--Lazarsfeld, Simpson, Campana, and finally 
Arapura \cite{Ar}.  In particular, if $X$ is a smooth, 
quasi-projective variety, then all the components of 
$\WW^1(X)$ are torsion-translated subtori.  Recent work 
of Dimca, Papadima, and Suciu \cite{DPS-duke} sheds 
light on the first resonance variety of such varieties:  
if  $X$ is also $1$-formal (e.g., if $X$ is compact), 
then all the components of $\RR^1(X)$ are rationally 
defined linear subspaces. It follows that $X$ is locally 
$1$-straight, and 
$\Omega^1_r(X) \subseteq \sigma_r(\RR^1(X,\Q))^{\compl}$.  
In general, though, the inclusion can be strict.  For 
instance, we prove in  Theorem \ref{thm:tt} the following: 
if $\WW^1(X)$ has a $1$-dimensional 
component not passing through $1$, and $\RR^1(X)$ has 
no codimension-$1$ components, then 
$\Omega^1_2(X) \ne \sigma_2(\RR^1(X,\Q))^{\compl}$. 

Hyperplane arrangements have been the main driving 
force behind the development of the theory of cohomology 
jump loci, and still provide a rich source of motivational 
examples for this theory.  If $\A$ is an arrangement 
of hyperplanes in $\C^{\ell}$, then its   
complement, $X=\C^{\ell}\setminus \bigcup_{H\in \A} H$,
is a connected, smooth, quasi-projective variety.  
It turns out that $X$ is also  
formal, locally straight, but not always straight. 
The first resonance variety of the arrangement 
is completely understood, owing to work of 
Falk \cite{Fa97}, Cohen, Libgober, Suciu, Yuzvinsky, 
and others, with the state of the art being the recent 
work of Falk, Pereira, and Yuzvinsky \cite{FY, PeY, Yu}. 
This allows for explicit computations of the 
Dwyer--Fried invariants of various classes of arrangements.  
For the deleted $\operatorname{B}_3$ arrangement, 
though, the computation is much more subtle, 
due to the presence of a translated component 
in the first characteristic variety. 

\subsection{Organization of the paper}
\label{subsec:intro org} 

The paper is divided in roughly two parts.  In the first part 
(Sections \ref{sec:aomoto}--\ref{sec:straight}), we recall 
some of the basic theory of cohomology jump loci, and 
develop the notion of straightness. In the second part 
(Sections \ref{sec:df res}--\ref{sec:arrs}), we develop the 
Dwyer--Fried theory in the straight context, and apply it 
in a variety of settings.

In \S\ref{sec:aomoto} we define the Aomoto complex 
of a space $X$, and study it in more detail in the case 
when $X$ admits a minimal cell structure. We use 
the Aomoto complex in \S\ref{sec:res vars} to define 
the resonance varieties, and establish some 
of the basic properties of these varieties.

In \S\ref{sec:tcones vars} we define two types of tangent 
cones to a subvariety of $(\C^{\times})^n$, and 
recall some of their features.  In \S\ref{sec:cvs}, we introduce 
the characteristic varieties, and review some  
pertinent facts about their tangent cones.  Finally, in 
\S\ref{sec:straight} we define and study the various 
notions of straightness, based on the geometry of the 
jump loci.

We start \S\ref{sec:df res} with a review of the Dwyer--Fried 
invariants, and the way they relate to the characteristic varieties, 
after which we prove Theorem \ref{thm:intro main}. 
In \S\ref{sec:formality}, we discuss the relevance of 
formality in this setting. 

Finally, we show how our techniques work for three 
classes of spaces:  toric complexes in \S\ref{sec:toric}, 
K\"{a}hler and quasi-K\"{a}hler manifolds in \S\ref{sec:kahler}, 
and complements of hyperplane arrangements in \S\ref{sec:arrs}.  
In each case, we explain what is known about the 
cohomology jump loci of those spaces, and 
use this knowledge to determine their straightness 
properties, and to compute some of their $\Omega$-invariants.

\section{The Aomoto complex}
\label{sec:aomoto}

We start by recalling the definition of the (universal) 
Aomoto complex associated to the cohomology ring 
of a space $X$. 
When $X$ admits a minimal cell structure, this cochain 
complex can be read off from the equivariant cellular 
chain complex of the universal abelian cover, $X^{\ab}$. 

\subsection{A cochain complex from the cohomology ring}
\label{subsec:aomoto}

Let $X$ be a connected CW-complex with finite $k$-skeleton, 
for some $k\ge 1$. Consider the cohomology algebra 
$A=H^* (X,\C)$, with product operation given by the 
cup product of cohomology classes.   For each $a\in A^1$, 
we have $a^2=0$, by graded-commutativity of the cup product. 
\begin{definition}
\label{def:aom}
The {\em Aomoto complex}\/ of $A$ (with respect to $a\in A^1$) 
is the cochain 
complex of finite-dimensional, complex vector spaces,
\begin{equation}
\label{eq:aomoto}
\xymatrixcolsep{18pt}
\xymatrix{(A , a)\colon  \ 
A^0\ar^(.66){a}[r] & A^1\ar^(.45){a}[r] 
& A^2  \ar^(.45){a}[r]& \cdots },
\end{equation}
with differentials given by left-multiplication by $a$.
\end{definition}

Here is an alternative in\-terpretation. Pick a basis 
$\{ e_1,\dots, e_n \}$ for the complex vector space 
$A^1=H^1(X,\C)$, and let $\{ x_1,\dots, x_n \}$ 
be the Kronecker dual basis for $A_1=H_1(X,\C)$.  
Identify the symmetric algebra $\Sym(A_1)$ 
with the polynomial ring $S=\C[x_1,\dots, x_n]$. 

\begin{definition}
\label{def:univ aom}
The {\em universal Aomoto complex}\/ of $A$ is the 
cochain complex of free $S$-modules, 
\begin{equation}
\label{eq:univ aomoto}
\xymatrixcolsep{18pt}
\AA\colon 
\xymatrix{
\cdots \ar[r] 
&A^{i}\otimes_{\C} S \ar^(.45){d^{i}}[r] 
&A^{i+1} \otimes_{\C} S \ar^(.5){d^{i+1}}[r] 
&A^{i+2} \otimes_{\C} S \ar[r] 
& \cdots},
\end{equation}
where the differentials are defined by 
$d^{i}(u \otimes 1)= \sum_{j=1}^{n} 
e_j u \otimes x_j$ for $u\in A^{i}$, and then 
extended by $S$-linearity. 
\end{definition}
The fact that $\AA$ is a cochain complex is  
verified as follows:
\begin{align*}
d^{i+1}d^i (u\otimes 1) &=\sum_{k=1}^n \sum_{j=1}^{n} 
e_ke_j u \otimes x_j x_k \\ 
&= \sum_{j<k} (e_k e_j +e_j e_k) u \otimes x_j x_k =0.
\end{align*}

The relationship between the two definitions is given 
by the following well-known lemma.
\begin{lemma}
\label{lem:two aom}
The evaluation of the universal Aomoto complex $\AA$ 
at an element $a\in A^1$ 
coincides with the Aomoto complex $(A,a)$. 
\end{lemma}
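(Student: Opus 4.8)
The plan is to make the phrase ``evaluation at $a$'' precise and then reduce the statement to a one-line computation. Writing $a=\sum_{j=1}^n a_j e_j$ with coordinates $a_j\in\C$, I interpret evaluation at $a$ as the base change of $\AA$ along the $\C$-algebra homomorphism $\ev_a\colon S\to\C$ that sends each polynomial to its value at the point $a\in A^1$. Because the $x_j$ are Kronecker dual to the $e_j$, this homomorphism is given on generators by $\ev_a(x_j)=x_j(a)=a_j$. Applying $-\otimes_S\C$ (through $\ev_a$) to $\AA$ then identifies each free module $A^i\otimes_\C S$ with $A^i\otimes_\C\C=A^i$, so the evaluated complex has the same underlying graded vector space as $(A,a)$, concentrated in the same cohomological degrees.

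It remains to identify the differentials. First I would evaluate $d^i$ on a decomposable generator $u\otimes 1$ with $u\in A^i$: by definition $d^i(u\otimes 1)=\sum_{j=1}^n e_j u\otimes x_j$, and passing to $A^i\otimes_\C\C$ via $\ev_a$ replaces each $x_j$ by $a_j$, yielding $\sum_{j=1}^n a_j (e_j u)=\bigl(\sum_{j=1}^n a_j e_j\bigr)u=au$. Since the evaluated map is $\C$-linear and such generators span $A^i$, this shows that the induced differential is precisely left-multiplication by $a$, which is exactly the differential of $(A,a)$.

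There is no serious obstacle here; the lemma is essentially a matter of unwinding definitions. The two points that require care are purely conventional: one must use the duality $\langle x_i,e_j\rangle=\delta_{ij}$ correctly, so that $\ev_a$ records the coordinates of $a$ itself rather than those of a dual vector, and one must note that $\ev_a$ is a \emph{ring} homomorphism, so that the $S$-linear extension of the differentials is compatible with the base change and no products of the $x_j$ survive evaluation in this degree-wise computation. Once these conventions are fixed, the identification of cochain complexes $\AA\otimes_S\C\cong(A,a)$ is immediate.
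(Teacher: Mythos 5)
Your proof is correct and follows essentially the same route as the paper: write $a=\sum_j a_j e_j$, base-change $\AA$ along the ring morphism $\ev_a\colon S\to\C$, and compute on generators that the induced differential $\sum_j e_j u\otimes \ev_a(x_j)=a\cdot u$ is left-multiplication by $a$. The extra remarks about Kronecker duality and $\ev_a$ being a ring homomorphism are fine but only make explicit what the paper's one-line computation already uses.
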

\begin{proof}
Write $a=\sum_{j=1}^{n} a_j e_j \in A^1$, and 
let $\ev_a \colon S\to \C$ be the ring morphism given 
by $g\mapsto g(a_1,\dots, a_n)$.  The resulting 
cochain complex, $\AA (a)= \AA\otimes_S \C$, 
has differentials $d^i(a):=d^i \otimes \id_{\C}$ 
given by
\begin{equation}
\label{eq:deltai}
d^i(a)(u)=\sum_{j=1}^{n} e_j u \otimes \ev_a(x_j) 
= \sum_{j=1}^{n} e_j u\cdot  a_j = a\cdot u. 
\end{equation}
Thus, $\AA (a)=(A,a)$.
\end{proof}

\subsection{Minimality and the Aomoto complex}
\label{subsec:min aomoto}

As shown by Papadima and Suciu in \cite{PS-tams}, 
the universal Aomoto complex of $H^*(X,\C)$ is functorially 
determined by the equivariant chain complex of the universal 
abelian cover $X^{\ab}$, provided $X$ admits a minimal 
cell structure.

More precisely, suppose $X$ is a connected, 
finite-type CW-complex.  
We say the CW-structure on $X$ is {\em minimal}\/ 
if the number of $i$-cells of $X$  equals the 
Betti number $b_i(X)$, for every $i\ge 0$. 
Equivalently, the boundary maps in the cellular 
chain complex $C_{\bullet}(X,\Z)$ are all zero maps. 
In particular, the homology groups $H_i(X,\Z)$ are all 
torsion-free.

\begin{theorem}[\cite{PS-tams}]
\label{thm:linaom}
Let $X$ be a minimal CW-complex. Then the linearization 
of the cochain complex $C^{\bullet}(X^{\ab},\C)$ 
coincides with the universal Aomoto complex of 
$H^*(X,\C)$.
\end{theorem}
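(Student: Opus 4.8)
The plan is to prove Theorem~\ref{thm:linaom} by constructing an explicit $S$-linear chain isomorphism between the linearized cochain complex $\lin{C^{\bullet}(X^{\ab},\C)}$ and the universal Aomoto complex $\AA$, and then checking that it intertwines the differentials. First I would set up the coefficient ring carefully. The universal abelian cover $X^{\ab}$ has deck transformation group $H_1(X,\Z)$, which by minimality is a free abelian group $\Z^n$; consequently the equivariant cellular cochain complex $C^{\bullet}(X^{\ab},\C)$ is naturally a complex of free modules over the group ring $\C[\Z^n]=\C[t_1^{\pm 1},\dots,t_n^{\pm 1}]$. Because $X$ is minimal, the number of $i$-cells equals $b_i(X)=\dim_{\C} A^i$, so in each degree $i$ the module $C^i(X^{\ab},\C)$ is free of rank $\dim_{\C} A^i$, matching the $S$-rank of $A^i\otimes_{\C}S$. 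The linearization operation passes from $\C[\Z^n]$ to its associated graded with respect to the augmentation ideal $I=(t_1-1,\dots,t_n-1)$, and $\gr(\C[\Z^n])\cong S=\C[x_1,\dots,x_n]$ via $t_j-1\mapsto x_j$; this is the identification under which I want to compare the two complexes.

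The key computation is to pin down the coboundary maps of $C^{\bullet}(X^{\ab},\C)$ in terms of the cup-product structure on $A=H^*(X,\C)$, and then take the linear part. The mechanism is Fox calculus / the Magnus expansion: the equivariant coboundary $\partial^i$ is represented, after choosing cellular bases, by a matrix over $\C[\Z^n]$ whose entries, upon writing $t_j=1+x_j+(\text{higher order})$, have a well-defined degree-one part. The content of the theorem, following Papadima--Suciu \cite{PS-tams}, is that this degree-one part is exactly the Aomoto differential $d^i(u\otimes 1)=\sum_{j} e_j u\otimes x_j$, i.e. left cup-multiplication encoded by the $S$-linear map sending $u$ to $\sum_j (e_j\cup u)\otimes x_j$. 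So the plan is: (i) verify that the constant ($x$-degree~$0$) part of $\partial^i$ vanishes, which is precisely the minimality hypothesis (boundary maps in $C_{\bullet}(X,\Z)$ are zero), so that the lowest-order term of $\partial^i$ genuinely lives in degree~$1$; and (ii) identify that degree-one term with the cup product by matching the linear coefficients of $t_j-1$ against the structure constants of the cup product on $H^1\otimes H^i\to H^{i+1}$.

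The main obstacle I expect is step~(ii): rigorously identifying the linear coefficients of the equivariant coboundary with cup products, rather than with some other bilinear pairing. This requires a genuine piece of algebraic topology, namely the standard fact that the first-order terms of the Fox-derivative matrices of the universal abelian cover compute the cup-product pairing $\cup\colon H^1(X,\C)\otimes H^i(X,\C)\to H^{i+1}(X,\C)$; concretely one compares the diagonal approximation on $X$ (which computes cup products) with the equivariant attaching data of $X^{\ab}$. Handling this cleanly is easiest by invoking the functorial description in \cite{PS-tams} rather than recomputing from scratch, and by exploiting the basis duality already fixed in the excerpt: $\{e_1,\dots,e_n\}$ is a basis of $A^1$ and $\{x_1,\dots,x_n\}$ is Kronecker-dual, so the coefficient of $x_j$ in $\partial^i$ should pair against $e_j$ precisely because $x_j$ corresponds to $t_j-1$ and $t_j$ to the generator of $H_1$ dual to $e_j$.

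Once the degree-one part of $\partial^i$ is matched with $d^i$, the proof concludes by observing that both complexes are complexes of free $S$-modules of the same ranks in each degree, with the identified differentials, so the chosen bases furnish the required isomorphism of cochain complexes; the relation $d^{i+1}d^i=0$ is then automatic both from $\partial^2=0$ and from the direct computation already given after Definition~\ref{def:univ aom}. I would keep the exposition short, cite \cite{PS-tams} for the functoriality and the cup-product identification, and present only the bookkeeping needed to align the two complexes under the isomorphism $\gr\C[\Z^n]\cong S$.
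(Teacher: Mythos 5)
You should know at the outset that the paper contains no proof of Theorem \ref{thm:linaom}: the result is quoted wholesale from \cite{PS-tams}, and the paragraph following the statement merely unpacks it --- minimality gives the identifications $C_i(X^{\ab},\C)\cong \Lambda\otimes_{\C}H_i(X,\C)$ and $C^{i}(X^{\ab},\C)\cong A^{i}\otimes_{\C}\Lambda$, the ring map $t_j-1\mapsto x_j$ identifies $\gr(\Lambda)$ with $S$, and the theorem is then the equality $\gr(\delta^i)^{\linn}=d^i$. Your setup reproduces this unpacking faithfully, and your step (i) is correct and correctly attributed to minimality: augmenting the equivariant chain complex recovers $C_{\bullet}(X,\Z)$, whose boundary maps vanish, so the constant term of $\partial^i_{\ab}$ is zero and its lowest-order term is genuinely of degree one.

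The problem is step (ii). The assertion that the degree-one coefficients of the equivariant coboundary are the structure constants of the cup product $H^1(X,\C)\otimes H^i(X,\C)\to H^{i+1}(X,\C)$ is not an auxiliary ``standard fact'' available for citation; it \emph{is} the theorem. Your plan handles it by ``invoking the functorial description in \cite{PS-tams}'', which is citing the very result being proved, so as a self-contained proof the proposal is circular --- it reduces the theorem to itself plus bookkeeping (equal ranks, dual bases, $d^2=0$), and that bookkeeping is the trivial part. Since the paper itself does nothing more than cite \cite{PS-tams}, your treatment is in practice the same as the paper's; but if the goal were an actual proof, the missing mathematics is exactly the comparison you name and then defer: a chain-level argument relating the equivariant attaching maps of $X^{\ab}$ (equivalently, a diagonal approximation on $X$) to cup products, which is what Papadima and Suciu carry out via their analysis of the equivariant spectral sequence. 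For $i=1$ this can be done by hand with Fox calculus and the Magnus expansion on a presentation $2$-complex, but in higher degrees there is no classical shortcut, and that is where all the substance of \cite{PS-tams} lies.
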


Let us explain in more detail how this theorem works. 
Pick an isomorphism $H_1(X,\Z)\cong\Z^n$, and 
identify $\C[\Z^n]$ with  
$\Lambda=\C[t_1^{\pm 1}, \dots , t_n^{\pm 1}]$.  
Next, filter $\Lambda$ by powers of the maximal ideal 
$I=(t_1-1,\dots,t_n-1)$, and identify the associated graded 
ring, $\gr(\Lambda)$, with the polynomial ring 
$S=\C[x_1,\dots,x_n]$, via the ring map $t_i-1\mapsto x_i$.  

The minimality hypothesis allows us to identify 
$C_{i} (X^{\ab}, \C)$ with $\Lambda \otimes_{\C} H_i(X,\C)$   
and $C^{i} (X^{\ab}, \C)$ with $A^{i} \otimes_{\C} \Lambda$.  
Under these identifications, the boundary map 
$\partial_{i+1}^{\ab}\colon C_{i+1} (X^{\ab}, \C)
\to C_{i} (X^{\ab}, \C) $ dualizes to a map $\delta^i\colon 
A^i \otimes_{\C} \Lambda \to A^{i+1} \otimes_{\C} \Lambda$. 
Let $\gr(\delta^i)\colon 
A^i \otimes_{\C} S \to A^{i+1} \otimes_{\C} S$ 
be the associated graded of $\delta^i$, and let 
$\gr(\delta^i)^{\linn}$ be its linear part.   
Theorem \ref{thm:linaom} then provides an identification
\begin{equation}
\label{eq:linaom}
\gr(\delta^i)^{\linn}=d^i\colon 
A^i \otimes_{\C} S \to A^{i+1} \otimes_{\C} S,
\end{equation}
for each $i\ge 0$.

\begin{example}
\label{ex:koszul}
Let $X=T^n$ be the $n$-dimensional torus, with the 
standard product cell structure.  Then $X$ is a minimal 
cell complex, and $C_{\bullet}(X^{\ab},\C)$ is  
the Koszul complex $K(t_1-1,\dots , t_n-1)$ over the 
ring $\Lambda$.  The cohomology ring 
$H^*(T^n,\C)$ is the exterior algebra $E$ on variables 
$e_1,\dots, e_n$, and the universal Aomoto complex 
$E\otimes_{\C} S$ is the Koszul complex $K(x_1,\dots ,x_n)$ 
over the ring $S$.  In this case, Theorem \ref{thm:linaom} 
simply says that the substitution $t_i-1\mapsto x_i$ takes 
one Koszul complex to the other.
\end{example}

\begin{example}
\label{ex:s1s2}
Let $Y=S^1 \vee S^2$, and identify $\pi_1(Y)=\Z$, 
with generator $t$, and $\pi_2(Y)=\Z[t^{\pm 1}]$.    
Given a polynomial $f\in \Z[t]$, let $\varphi\colon S^2 \to Y$ 
be a map representing $f$, and attach a $3$-cell to $Y$ 
along $\varphi$ to obtain a CW-complex $X_f$.  
For instance, if $f(t)=t-1$, then $X_f \simeq S^1\times S^2$.   
More generally, $X_f$ is minimal if and only if $f(1)=0$, in 
which case $H_*(X_f,\Z)\cong H_*(S^1\times S^2,\Z)$.

Now identify $\pi_1(X_f)=\Z$ and $\C[\Z]$ with 
$\Lambda=\C[t^{\pm 1}]$.  The chain complex 
$C_{\bullet}(X^{\ab}_f,\C)$ can then be written as 
\begin{equation}
\label{eq:s1s2ch}
\xymatrix{\Lambda \ar^{f(t)}[r] & \Lambda \ar^{0}[r] &   
\Lambda \ar^{t-1}[r] &  \Lambda}.
\end{equation}

Finally, identify $\gr(\Lambda)$ with $S=\C[x]$, and set 
$g(x)=f(1+x)$.  Suppose $f(1)=0$, so that $X_f$ is minimal.  
Then, the linear term of $g(x)$ is $f'(1)\cdot x$, and 
so the universal Aomoto complex of $H^*(X_f,\C)$ is
\begin{equation}
\label{eq:s1s2aom}
\xymatrixcolsep{26pt}
\xymatrix{S \ar^{x}[r] & S \ar^{0}[r] &   
S \ar^{f'(1)x}[r] &  S}.
\end{equation}
\end{example}

\section{Resonance varieties}
\label{sec:res vars}

In this section, we review the definition and basic properties 
of the resonance varieties of a space $X$, which measure 
the deviation from exactness of the Aomoto complexes 
associated to the cohomology ring $H^*(X,\C)$.  

\subsection{Jump loci for the Aomoto-Betti numbers}
\label{subsec:res vars}

As usual, let $X$ be a connected CW-complex with 
finite $k$-skeleton.  Denote by $A$ the cohomology 
algebra $H^*(X,\C)$.  
Computing the homology of the Aomoto complexes 
$(A,a)$ for various values of the parameter $a\in A^1$, 
and recording the resulting Betti numbers, carves 
out some very interesting subsets of the affine space 
$A^1=\C^n$, where $n=b_1(X)$.

\begin{definition}
\label{def:rvs}
The {\em resonance varieties}\/ of $X$ are the sets 
\begin{equation*}
\label{eq:rvs}
\RR^i_d(X)=\{a \in A^1 \mid 
\dim_{\C} H^i(A,\cdot a) \ge  d\},  
\end{equation*}
defined for all integers $0\le i\le k$ and $d>0$. 
\end{definition}

The degree-$1$, depth-$1$ resonance variety is especially 
easy to describe: $\RR^1_1(X)$ consists of those elements 
$a\in A^1$ for which there exists an element $b \in A^1$, 
not proportional to $a$, and such that $b\cdot a=0$. 

The terminology from Definition \ref{def:rvs} is justified 
by the following well-known lemma.  For completeness, 
we include a proof.

\begin{lemma}
\label{lem:res ideal}
The sets $\RR^i_d(X)$ are homogeneous algebraic 
subvarieties of the affine space $A^1=H^1(X,\C)$.   
\end{lemma}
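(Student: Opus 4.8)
The plan is to reduce everything to a determinantal description of $\RR^i_d(X)$. Fix a basis $e_1,\dots,e_n$ of $A^1$ and write a general element as $a=\sum_{j=1}^n a_j e_j$, so that $(a_1,\dots,a_n)$ are linear coordinates on $A^1\cong\C^n$. Since the cup product is bilinear, the left-multiplication map $\delta^j_a:=(a\cdot)\colon A^j\to A^{j+1}$ depends linearly on $a$: once bases of $A^j$ and $A^{j+1}$ are chosen, the matrix of $\delta^j_a$ has entries that are homogeneous linear forms in the $a_\ell$. For $j<k$ every space in sight is finite-dimensional; for the top index $j=k$, the image of $\delta^k_a$ always lands in the finite-dimensional subspace $W\subseteq A^{k+1}$ spanned by the products $e_\ell\cup v$ with $v$ ranging over a basis of $A^k$, so I would regard $\delta^k_a\colon A^k\to W$ and keep all matrices finite.

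Next I would express the Aomoto-Betti numbers as a rank expression. With $b_j=\dim_\C A^j$ and the convention $\delta^{-1}=0$, the identity $H^i(A,\cdot a)=\ker\delta^i_a/\im\delta^{i-1}_a$ gives
\[
\dim_\C H^i(A,\cdot a)=b_i-\rank\delta^{i-1}_a-\rank\delta^i_a.
\]
Hence the condition $\dim_\C H^i(A,\cdot a)\ge d$ defining $\RR^i_d(X)$ is equivalent to the rank inequality $\rank\delta^{i-1}_a+\rank\delta^i_a\le b_i-d$.

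The key step is to recognize this locus as a finite union of determinantal varieties. I would write
\[
\RR^i_d(X)=\bigcup_{\substack{p+q=b_i-d\\ p,q\ge 0}}\Big(\{a:\rank\delta^{i-1}_a\le p\}\cap\{a:\rank\delta^i_a\le q\}\Big),
\]
the point being that $\rank\delta^{i-1}_a+\rank\delta^i_a\le b_i-d$ holds exactly when the bound can be split as $\rank\delta^{i-1}_a\le p$ and $\rank\delta^i_a\le q$ for some admissible pair. Each set $\{a:\rank\delta^j_a\le p\}$ is cut out by the vanishing of all $(p+1)\times(p+1)$ minors of the matrix of $\delta^j_a$; as those entries are linear forms in $a$, these minors are homogeneous polynomials of degree $p+1$. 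Thus each factor is a homogeneous subvariety, and a finite intersection and finite union of such is again a homogeneous subvariety, proving both assertions at once. (When $b_i-d<0$ the index set is empty and $\RR^i_d(X)=\emptyset$, consistent with $\dim_\C H^i\le b_i$.)

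I do not expect a serious obstacle; the only genuine bookkeeping point is the top index $i=k$, where $A^{k+1}$ may be infinite-dimensional. This is dispatched by the reduction to the finite-dimensional target $W$ above (or by noting that even an infinite family of vanishing minors still defines a Zariski closed set). As a sanity check on homogeneity, one may also argue directly: for $\lambda\in\C^\times$ the rescaling $a\mapsto\lambda a$ induces an isomorphism of cochain complexes $(A,a)\isom(A,\lambda a)$ given by $\lambda^j\cdot\id$ in degree $j$, so $\dim_\C H^i$ is invariant under scaling and $\RR^i_d(X)$ is a cone.
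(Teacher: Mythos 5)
Your proof is correct and takes essentially the same approach as the paper: both reduce membership in $\RR^i_d(X)$ to the rank inequality $\rank\delta^{i-1}(a)+\rank\delta^{i}(a)\le b_i-d$ and then exhibit the locus as a determinantal variety, with your union over splittings $p+q=b_i-d$ being just the De Morgan dual of the paper's intersection of unions (which the paper writes as the zero set of a sum of products of determinantal ideals), and with the scaling argument you offer as a sanity check being exactly the paper's homogeneity argument. Your explicit handling of the possibly infinite-dimensional target $A^{k+1}$ in the top degree $i=k$ is a detail the paper leaves implicit, but it does not change the substance of the argument.
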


\begin{proof}  
By definition, an element $a\in A^1$ belongs to 
$\RR^i_d(X)$ if and only if 
$\rank \delta^{i-1}(a) + \rank \delta^{i}(a) \le c_i -d$, 
where $c_i=c_i(X)$ is the number of $i$-cells of $X$.  
As a set, then, $\RR^i_d(X)$ can be written as the intersection 
\begin{equation*}
\label{eq:big int}
\bigcap_{\stackrel{r+s=c_{i}-d+1}{r,s\ge 0}}
\{ a\in A^1 \mid \rank  \delta^{i-1}(a)  \le r-1 
\text{ or } \rank  \delta^{i}(a)  \le s-1\}.
\end{equation*}
Using this description, we may rewrite $\RR^i_d(X)$ as 
the zero-set of a sum of products of determinantal ideals,
\begin{equation}
\label{eq:ideal res}
\RR^i_d(X) = V\Bigg( \sum_{p+q=c_{i+1}+d-1} 
E_p(\delta^{i-1}) \cdot E_q(\delta^{i}) \Bigg).
\end{equation}

Clearly, $a\in \RR^i_d(X)$ if and only if $\lambda a \in \RR^i_d(X)$, 
for all $\lambda\in \C^{\times}$.  Thus, $\RR^i_d(X)$ is a 
homogeneous variety.
\end{proof}

Sometimes it will be more convenient to consider 
the projectivization $\overline{\RR}^i_d(X)$, 
viewed as a subvariety of $\bP(A^1)=\CP^{n-1}$. 

The resonance varieties $\RR^i_d(X)$ are homotopy-type 
invariants of the space $X$.  The following (folklore) result 
makes this more precise.

\begin{lemma}
\label{lem:res inv}
Suppose $X\simeq X'$.   There is then a linear isomorphism 
$H^1(X',\C)\cong H^1(X,\C)$ which restricts to isomorphisms 
$\RR^i_d(X') \cong \RR^i_d(X)$, for all $i\le k$ and $d>0$.  
\end{lemma}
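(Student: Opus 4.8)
The plan is to deduce everything from the functoriality of cohomology and the purely algebraic nature of Definition \ref{def:rvs}. A homotopy equivalence $f\colon X\to X'$ induces an isomorphism of graded $\C$-algebras $f^*\colon A'\to A$, where $A=H^*(X,\C)$ and $A'=H^*(X',\C)$. Its restriction to degree one is a linear isomorphism $\phi\colon H^1(X',\C)\to H^1(X,\C)$, and the claim is that this single map simultaneously carries every resonance variety of $X'$ onto the corresponding one of $X$.

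The first step is to record how $f^*$ interacts with the Aomoto differentials. Fix $a'\in A'^1$ and put $a=\phi(a')\in A^1$. Since $f^*$ is an algebra map, for every $u'\in A'$ we have $f^*(a'u')=f^*(a')\,f^*(u')=a\cdot f^*(u')$. In other words, $f^*$ intertwines left-multiplication by $a'$ with left-multiplication by $a$, so it is a morphism of cochain complexes from the Aomoto complex $(A',a')$ to $(A,a)$; being bijective in each degree, it is in fact an isomorphism of cochain complexes.

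The second step is routine: an isomorphism of cochain complexes induces isomorphisms on cohomology in every degree, whence $H^i(A',\cdot a')\cong H^i(A,\cdot a)$, and in particular $\dim_{\C} H^i(A',\cdot a')=\dim_{\C} H^i(A,\cdot a)$ for all $i$. (For $i\le k$ these dimensions are finite, since the relevant cohomology is a subquotient of the finite-dimensional space $A^i=H^i(X,\C)$.) Comparing with Definition \ref{def:rvs}, this equality says precisely that $a'\in\RR^i_d(X')$ if and only if $a=\phi(a')\in\RR^i_d(X)$. Therefore $\phi(\RR^i_d(X'))=\RR^i_d(X)$, and since $\phi$ is a linear isomorphism it restricts to the asserted isomorphism of varieties for every $i\le k$ and $d>0$.

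I expect no genuine obstacle here: the argument is entirely formal once one observes that $f^*$ transports Aomoto complexes to Aomoto complexes. The only points meriting a word of care are bookkeeping ones, namely checking that the Aomoto--Betti numbers are finite in the range $i\le k$ so that the jump conditions make sense, and noting that although $\phi$ depends on the chosen homotopy equivalence, the statement asserts only the existence of such a linear isomorphism, so no independence claim is required.
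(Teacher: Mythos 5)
Your proposal is correct and follows exactly the paper's own argument: the homotopy equivalence induces an algebra isomorphism $f^*$ on cohomology, which intertwines the Aomoto differentials and hence gives isomorphisms of Aomoto complexes $(H^*(X',\C),a')\cong(H^*(X,\C),f^*(a'))$, so that $f^*$ restricted to degree one carries each $\RR^i_d(X')$ onto $\RR^i_d(X)$. You simply spell out the intermediate steps (the chain-map check and the equality of Aomoto--Betti numbers) that the paper leaves implicit.
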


\begin{proof}
Let $f\colon X\to X'$ be a homotopy equivalence.  
The induced homomorphism in cohomology,  
$f^* \colon H^*(X',\C)\isom H^*(X,\C)$,  defines 
isomorphisms $(H^*(X',\C),a)\isom (H^*(X,\C),f^*(a))$ 
between the respective Aomoto complexes, for all 
$a\in H^1(X',\C)$. Hence, 
$f^* \colon H^1(X',\C)\isom H^1(X,\C)$ restricts to 
isomorphisms $\RR^i_d(X') \isom \RR^i_d(X)$.  
\end{proof}

\subsection{Discussion}
\label{subsec:discuss}

In each degree $i\ge 0$, the 
resonance varieties provide a descending filtration,
\begin{equation}
\label{eq:res filt}
H^1(X,\C)= \RR^i_0(X)  \supseteq \RR^i_1(X) 
\supseteq \RR^i_2(X) \supseteq \cdots .
\end{equation} 

Note that $0\in \RR^i_d(X)$, for $d\le b_i(X)$, but 
$\RR^i_d(X)=\emptyset$, for $d>b_i(X)$. 
In degree $0$, we have $\RR^0_1(X)= \{ 0\}$,
and $\RR^0_d(X)= \emptyset$, for $d>1$. In degree $1$, 
the varieties $\RR^1_d(X)$ depend only on the group 
$G=\pi_1(X,x_0)$---in fact, only on the cup-product map 
$\cup \colon H^1(G,\C) \wedge H^1(G,\C) \to H^2(G,\C)$.   
Accordingly, we will sometimes write 
$\RR^1_d(G)$ for $\RR^1_d(X)$.  

\begin{example}
\label{ex:res torus}
Let $T^n$ be the $n$-dimensional torus.  Using the 
exactness of the Koszul complex from Example \ref{ex:koszul}, 
we see that $\RR^i_d(T^n)$ equals $\{0\}$ if $d\le \binom{n}{i}$, 
and is empty, otherwise. 
\end{example}

\begin{example}
\label{ex:res surf}
Let $S_g$ be the compact, connected, orientable 
surface of genus $g>1$.  With suitable identifications 
$H^1(S_g,\C)=\C^{2g}$ and $H^2(S_g,\C)=\C$, the 
cup-product map 
$\cup \colon H^1(S_g,\C) \wedge H^1(S_g,\C) \to H^2(S_g,\C)$
is the standard symplectic form.  A computation then 
shows
\begin{equation}
\label{eq:res surf}
\RR^i_d(S_g)=
\begin{cases}
\C^{2g} & \text{if $i=1$, $d< 2g-1$},\\
\{0\}& \text{if $i=1$, $d\in \{2g-1, 2g\}$, or $i\in\{0, 2\}$, $d=1$},\\
\emptyset & \text{otherwise}.
\end{cases}
\end{equation} 
\end{example}

One may extend the definition of resonance varieties 
to arbitrary fields $\k$, provided $H_1(X,\Z)$ is torsion-free, 
if $\ch\k=2$. The resulting varieties, $\RR^i_d(X,\k)$, 
behave well under field extensions: 
if $\k\subseteq \mathbb{K}$, then
$\RR^i_d(X,\k)=\RR^i_d(X,\mathbb{K}) \cap H^1(X,\k)$. 
In particular, $\RR^i_d(X,\Q)$ is just the set of rational 
points on the integrally defined variety 
$\RR^i_d(X)=\RR^i_d(X,\C)$. 

\subsection{Depth one resonance varieties}
\label{subsec:res one} 

Most important for us are the depth-$1$ resonance varieties, 
$\RR^i_1(X)$, and their unions up to a fixed degree, 
$\RR^i(X)=\bigcup_{j=0}^{i} \RR^j_1(X)$. The latter 
varieties can be written as
\begin{equation}
\label{eq:depth1 res}
\RR^i(X)=\{a \in A^1 \mid 
 H^j(A,\cdot a) \ne 0, \text{ for some $j\le i$}\}. 
\end{equation}
These sets provide an ascending filtration of the 
first cohomology group, 
\begin{equation}
\label{eq:filt upres}
\{0\}=\RR^0(X) \subseteq \RR^1(X) \subseteq  \cdots \subseteq 
\RR^k(X) \subseteq H^1(X,\C)=\C^n.
\end{equation}

For low values of $n=b_1(X)$, the  
variety $\RR^1(X)$ is easy to describe. 

\begin{prop}
\label{prop:low betti}
If $n\le 1$, then $\RR^1(X)=\{0\}$.
If $n=2$, then $\RR^1(X)=\C^2$ or $\{0\}$, according 
to whether the cup product vanishes on $H^1(X,\C)$ or not. 
\end{prop}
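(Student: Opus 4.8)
The plan is to reduce the whole statement to the depth-one piece $\RR^1_1(X)$ and then run a short linear-algebra argument on the cup-product form. Since $\RR^1(X)=\RR^0_1(X)\cup\RR^1_1(X)$ and $\RR^0_1(X)=\{0\}$, it suffices to determine $\RR^1_1(X)$. Because $X$ is connected we have $A^0=\C$, so the Aomoto complex $(A,a)$ begins $\C\xrightarrow{\,\cdot a\,}A^1\xrightarrow{\,\cdot a\,}A^2$, and hence
\[
H^1(A,\cdot a)=\ker\!\big(\cdot a\colon A^1\to A^2\big)\big/\C a .
\]
Thus $a\in\RR^1_1(X)$ precisely when $\dim_\C\ker(\cdot a)$ exceeds $\dim_\C\C a$; equivalently (as recorded right after Definition \ref{def:rvs}) when there is a $b\in A^1$ not proportional to $a$ with $a\cdot b=0$. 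I would also note at the outset that $0\in\RR^1_1(X)$ as soon as $n\ge 1$, since then $\ker(\cdot 0)=A^1\neq 0=\C\cdot 0$.

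For $n\le 1$ the statement is immediate. If $n=0$ then $A^1=\{0\}$ and there is nothing to prove. If $n=1$ then any two vectors of $A^1$ are proportional, so no nonzero $a$ can admit an independent $b$ in its kernel; hence $\RR^1_1(X)=\{0\}$ and $\RR^1(X)=\{0\}$.

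The substance is the case $n=2$, and the key observation is that the cup product $\mu\colon A^1\times A^1\to A^2$ is an alternating bilinear map (graded-commutativity gives $\mu(a,b)=-\mu(b,a)$, and $a^2=0$). If $\mu\equiv 0$, then $\cdot a$ is the zero map for every $a$, so $\ker(\cdot a)=A^1$ and $H^1(A,\cdot a)=A^1/\C a\neq 0$ for all $a$; thus $\RR^1_1(X)=\C^2$ and $\RR^1(X)=\C^2$. If $\mu\not\equiv 0$, I would fix a basis $e_1,e_2$ of $A^1$ and set $c=\mu(e_1,e_2)\neq 0$; then $\mu(a,b)=(a_1b_2-a_2b_1)\,c$, so for $a\neq 0$ the kernel of $\cdot a$ is exactly $\{b:a_1b_2-a_2b_1=0\}=\C a$, whence $H^1(A,\cdot a)=0$ and $a\notin\RR^1_1(X)$. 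Combined with $0\in\RR^1_1(X)$, this yields $\RR^1_1(X)=\{0\}$ and $\RR^1(X)=\{0\}$. The only place requiring any real care is this last computation, namely recognizing that a nonzero alternating form on a $2$-dimensional space has, for each nonzero $a$, kernel line exactly $\C a$; everything else is bookkeeping.
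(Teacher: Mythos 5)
Your proof is correct, and it is exactly the argument the paper intends: the proposition is stated there without proof, as an immediate consequence of the remark following Definition~\ref{def:rvs} that $\RR^1_1(X)$ consists of those $a$ admitting an annihilator $b$ not proportional to $a$, which is precisely the criterion you derive and then settle by the alternating-form computation. Nothing further is needed.
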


For $n\ge 3$, the resonance variety $\RR^1(X)$ can 
be much more complicated; in particular, it may have 
irreducible components which are not linear subspaces.  
The following example (a particular case of a more 
general construction described in \cite{DPS-duke}) 
illustrates this phenomenon. 

\begin{example}
\label{ex:conf spaces}
Let $X=F(T^2,3)$ be the configuration space of $3$ 
labeled points on the torus. The cohomology ring of 
$X$ is the exterior algebra on generators 
$a_1,a_2,a_3, b_1,b_2,b_3$ in degree $1$, modulo 
the ideal $\langle (a_1-a_2)(b_1-b_2),\, 
(a_1-a_3)(b_1-b_3),\, (a_2-a_3)(b_2-b_3)\rangle$. 
A calculation reveals that 
\[
\RR^1(X)=\{ (a,b) \in \C^6 \mid 
a_1+a_2+a_3=b_1+b_2+b_3=a_1b_2-a_2b_1=0\}.
\]
Hence, $\RR^1(X)$ is isomorphic to $Q=V(a_1b_2-a_2b_1)$, 
a smooth quadric hypersurface in $\C^4$.  (The projectivization 
of $Q$ is the image of the Segr\'{e} embedding of 
$\CP^1\times \CP^1$ in $\CP^3$.)  
\end{example}

The depth-$1$ resonance varieties of a product or a wedge 
of two spaces can be expressed in terms of the resonance 
varieties of the factors. Start with a product $X=X_1\times X_2$, 
where both $X_1$ and $X_2$ have finite $k$-skeleton, and identify 
$H^1(X,\C)=H^1(X_1,\C)\times H^1(X_2,\C)$. 

\begin{prop}[\cite{PS-plms}]
\label{prop:res prod}
For all $i\le k$,
\[
\RR^i_1(X_1\times X_2)= 
\bigcup_{p+q=i} \RR^{p}_1(X_1) \times \RR^{q}_1(X_2).
\]
\end{prop}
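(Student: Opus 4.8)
The plan is to exploit the multiplicative structure of the Künneth decomposition and reduce the statement to the algebraic Künneth formula for tensor products of cochain complexes over the field $\C$. Write $A_1 = H^*(X_1,\C)$ and $A_2 = H^*(X_2,\C)$. Since we work over a field, the Künneth theorem identifies the cohomology ring $A = H^*(X_1 \times X_2, \C)$ with the graded tensor product $A_1 \otimes A_2$; under the identification $H^1(X,\C) = H^1(X_1,\C) \oplus H^1(X_2,\C)$, an element $a \in A^1$ decomposes uniquely as $a = a_1 \otimes 1 + 1 \otimes a_2$ with $a_i \in A_i^1$.

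The key step, and really the whole content of the argument, is to verify that under this identification the Aomoto complex $(A, a)$ is precisely the tensor product of the two Aomoto complexes $(A_1, a_1)$ and $(A_2, a_2)$. Using the Koszul sign rule governing multiplication in the graded tensor product, left-multiplication by $a$ acts on a homogeneous element $u_1 \otimes u_2$ by
\[
a \cdot (u_1 \otimes u_2) = a_1 u_1 \otimes u_2 + (-1)^{|u_1|} u_1 \otimes a_2 u_2,
\]
which is exactly the differential on the tensor product complex $(A_1, a_1) \otimes (A_2, a_2)$. This sign bookkeeping is the only place where any care is required, and once it is in place the proof runs mechanically; so I regard recognizing the tensor-product structure of the Aomoto complex as the main hurdle.

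With this identification established, the algebraic Künneth formula over $\C$ gives, for each $i \le k$,
\[
H^i(A, a) = \bigoplus_{p+q=i} H^p(A_1, a_1) \otimes_{\C} H^q(A_2, a_2),
\]
with no correction terms, since $\C$ is a field and all the spaces are flat. It remains to translate this into the desired equality of varieties. By definition $a \in \RR^i_1(X)$ if and only if $H^i(A, a) \ne 0$, and the displayed direct sum is nonzero if and only if some summand $H^p(A_1, a_1) \otimes H^q(A_2, a_2)$ is nonzero. Since a tensor product of $\C$-vector spaces vanishes exactly when one of its factors does, this occurs if and only if there exist $p+q=i$ with $H^p(A_1, a_1) \ne 0$ and $H^q(A_2, a_2) \ne 0$, i.e. $a_1 \in \RR^p_1(X_1)$ and $a_2 \in \RR^q_1(X_2)$. (Note that $p, q \le i \le k$, so all resonance varieties appearing are defined.) This is precisely the membership condition for $(a_1, a_2) \in \bigcup_{p+q=i} \RR^p_1(X_1) \times \RR^q_1(X_2)$, establishing both inclusions simultaneously.
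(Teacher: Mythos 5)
Your proof is correct and is essentially the canonical argument: the paper states this proposition without proof, citing \cite{PS-plms}, and the argument there runs exactly as yours does, identifying the Aomoto complex of the product with the tensor product of the Aomoto complexes of the factors (the Koszul sign rule producing precisely the tensor-product differential) and then applying the K\"unneth formula over the field $\C$, where the vanishing of a tensor product forces the vanishing of a factor. The one point you pass over quickly---that the K\"unneth isomorphism $H^*(X_1\times X_2,\C)\cong H^*(X_1,\C)\otimes H^*(X_2,\C)$ holds in the degrees needed---is ensured in degrees $\le k$ by the finite $k$-skeleton hypothesis on $X_1$ and $X_2$, so nothing essential is missing.
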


\begin{corollary}
\label{cor:rri prod}
We have 
$\RR^i(X_1\times X_2)=\bigcup_{p+q=i} \RR^{p} (X_1) 
\times \RR^{q} (X_2)$, for all $i\le k$. 
\end{corollary}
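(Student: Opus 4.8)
The plan is to reduce the statement to Proposition \ref{prop:res prod} by a purely combinatorial reindexing of unions; no new geometry is needed. Starting from the definition $\RR^i(X)=\bigcup_{j=0}^{i}\RR^j_1(X)$ applied to the product $X=X_1\times X_2$, I would first substitute the product formula for each depth-one piece. This gives
\[
\RR^i(X_1\times X_2)=\bigcup_{j=0}^{i}\RR^j_1(X_1\times X_2)
=\bigcup_{j=0}^{i}\ \bigcup_{p+q=j}\RR^p_1(X_1)\times\RR^q_1(X_2)
=\bigcup_{\substack{p,q\ge 0\\ p+q\le i}}\RR^p_1(X_1)\times\RR^q_1(X_2).
\]

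Next I would expand the right-hand side of the claimed identity. Since $\RR^p(X_1)=\bigcup_{a\le p}\RR^a_1(X_1)$ and likewise for $X_2$, distributing the Cartesian product over these unions yields
\[
\bigcup_{p+q=i}\RR^p(X_1)\times\RR^q(X_2)
=\bigcup_{p+q=i}\ \bigcup_{\substack{a\le p\\ b\le q}}\RR^a_1(X_1)\times\RR^b_1(X_2).
\]
The crux is then the elementary index computation comparing the two double unions: a pair $(a,b)$ with $a,b\ge 0$ contributes to the latter union precisely when there exist $p,q\ge 0$ with $p+q=i$, $a\le p$, and $b\le q$. One checks that such $p,q$ exist if and only if $a+b\le i$ (the forward direction is immediate, and for the converse one takes $p=i-b$ and $q=b$). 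Matching this against the first display shows both expressions equal $\bigcup_{a+b\le i}\RR^a_1(X_1)\times\RR^b_1(X_2)$, which completes the argument.

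The only (very minor) obstacle is the bookkeeping of the nested unions, together with keeping track of the boundary term $\RR^0_1(X)=\{0\}$ so that the ranges $0\le p,q$ are handled correctly at the bottom of the filtration \eqref{eq:filt upres}. Everything else follows formally from Proposition \ref{prop:res prod}.
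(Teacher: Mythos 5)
Your proof is correct and matches the paper's intent: the paper states this as an immediate consequence of Proposition \ref{prop:res prod}, leaving the union-reindexing implicit, and your argument simply fills in that routine bookkeeping (the key observation that both sides equal $\bigcup_{a+b\le i}\RR^a_1(X_1)\times\RR^b_1(X_2)$ is exactly right).
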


\begin{example}
\label{ex:prod surf}
For a Riemann surface of genus $g>1$, formula 
\eqref{eq:res surf} yields $\RR^0(S_g)=\{0\}$ and 
$\RR^i(S_g)=\C^{2g}$, for all $i\ge 1$.  For a product 
of two such surfaces, Corollary \ref{cor:rri prod} now gives
\begin{equation}
\label{eq:res prod surf}
\RR^i(S_{g}\times S_{h})=\begin{cases}
\{0\}  & \text{if $i=0$},\\
\C^{2g}\times \{0\} \cup \{0\}\times \C^{2h} & \text{if $i=1$},\\
\C^{2(g+h)} & \text{if $i\ge 2$}.
\end{cases}
\end{equation}
\end{example}

Next, consider a wedge $X=X_1\vee X_2$, and identify 
$H^1(X,\C)=H^1(X_1,\C)\times H^1(X_2,\C)$. 

\begin{prop}[\cite{PS-plms}]
\label{prop:res wedge}
Suppose $X_1$ and $X_2$ have positive 
first Betti numbers.  Then
\[
\RR^i_1(X) = 
\begin{cases}
H^1(X,\C) 
& \text{if $i=1$,}\\[2pt]
\RR^i_1(X_1)\times H^1(X_2,\C) 
\cup  H^1(X_1,\C)  \times \RR^i_1(X_2)
&\text{if $1<i\le k$.}
\end{cases}
\]
\end{prop}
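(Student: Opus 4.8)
The plan is to compute $\RR^i_1(X_1\vee X_2)$ directly from the cohomology algebra $A=H^*(X_1\vee X_2,\C)$, exploiting the very special multiplicative structure of a wedge. The key algebraic fact is that for a wedge of spaces, $H^*(X_1\vee X_2,\C)$ is the ``connected sum'' of the two algebras: writing $A_\ell=H^*(X_\ell,\C)$, we have $A^j = A_1^j\oplus A_2^j$ in each positive degree $j\ge 1$, with $A^0=\C$, and crucially all cross products vanish, i.e. $u_1\cdot u_2=0$ whenever $u_1\in A_1^{\ge 1}$ and $u_2\in A_2^{\ge 1}$. Thus for a class $a=(a_1,a_2)\in A^1=H^1(X_1,\C)\oplus H^1(X_2,\C)$, left-multiplication by $a$ on a positive-degree element $(u_1,u_2)$ is simply $(a_1 u_1,\, a_2 u_2)$; the only coupling between the two factors occurs through the map $A^0=\C\to A^1$, where $1\mapsto (a_1,a_2)$.

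First I would set up the Aomoto complex $(A,a)$ and observe that in degrees $\ge 1$ it splits as a direct sum of the two Aomoto complexes $(A_1,a_1)$ and $(A_2,a_2)$; the only place where the complexes fail to split is at the degree $0\to 1$ boundary, where the single copy of $\C=A^0$ feeds into both $A_1^1$ and $A_2^1$. I would then compute cohomology degree by degree. In degrees $i>1$ the complex is an honest direct sum, so $H^i(A,a)=H^i(A_1,a_1)\oplus H^i(A_2,a_2)$, and hence $a\in\RR^i_1(X)$ iff $a_1\in\RR^i_1(X_1)$ or $a_2\in\RR^i_1(X_2)$ — but one must be careful about the depth-one convention and what happens when one of the $a_\ell$ is zero. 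This is exactly the content of the claimed formula: $\RR^i_1(X)=\RR^i_1(X_1)\times H^1(X_2,\C)\,\cup\,H^1(X_1,\C)\times\RR^i_1(X_2)$.

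The subtle case is $i=1$, and this is where the positive-Betti-number hypothesis enters. Here I would examine the truncated complex $A^0\to A^1\to A^2$, where the first map sends $1\mapsto(a_1,a_2)$ and the second is $(u_1,u_2)\mapsto(a_1u_1,a_2u_2)$. For $H^1$ to vanish one would need the image of the first map to equal the kernel of the second. The point is that as soon as both $b_1(X_1)>0$ and $b_1(X_2)>0$, one can always find a nonzero kernel element of the second map that is \emph{not} a multiple of $(a_1,a_2)$: for instance, if $a_1\ne 0$ pick any $0\ne c_2\in A_2^1$ (possible since $b_1(X_2)>0$) and note $(0,c_2)$ lies in the kernel of multiplication precisely when $a_2 c_2=0$, which one arranges by choosing $c_2$ proportional to $a_2$ (or, if $a_2=0$, any $c_2$ works). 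A short case analysis on whether $a_1,a_2$ are zero or nonzero shows that $H^1(A,a)\ne 0$ for \emph{every} $a\in A^1$, giving $\RR^1_1(X)=H^1(X,\C)$.

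The main obstacle will be bookkeeping the degree-one case cleanly: one must track both the $1$-dimensional $A^0$ contribution and the depth-one definition (which records $\dim H^i\ge 1$, i.e.\ nonvanishing), and verify that the wedge coupling at the bottom of the complex never suffices to kill $H^1$ once both factors have $b_1>0$. I would therefore structure the proof as (i) the multiplicative splitting of $A$ and the resulting decomposition of the Aomoto complex in degrees $\ge 1$; (ii) the straightforward identification of $H^i$ for $i>1$ as a direct sum, yielding the union formula; and (iii) the careful degree-one argument producing $H^1(A,a)\ne 0$ for all $a$. Steps (i) and (ii) are essentially formal given the structure of wedge cohomology; step (iii), and in particular pinning down exactly where $b_1(X_\ell)>0$ is used, is the part that requires genuine care.
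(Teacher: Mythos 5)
Your proposal is correct, and it is essentially the standard argument: the connected-sum structure of $H^*(X_1\vee X_2,\C)$ (vanishing mixed products), the resulting splitting of the Aomoto complex in degrees $\ge 1$ giving the union formula for $i>1$, and the degree-one count showing $\dim\ker\big(A^1\to A^2\big)\ge 2 > \dim\im\big(A^0\to A^1\big)$ once both $b_1(X_\ell)>0$, so that $H^1(A,a)\ne 0$ for every $a$. Note that the paper itself offers no proof of this proposition --- it is quoted from \cite{PS-plms} --- and your argument coincides with the one given there, so there is nothing further to compare.
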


\begin{corollary}
\label{cor:rri wedge}
Let $X=X_1\vee X_2$, where $X_1$ and $X_2$ 
have positive first Betti numbers. Then $\RR^i(X)= 
H^1(X,\C)$, for all $1\le i\le k$. 
\end{corollary}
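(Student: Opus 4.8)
The plan is to read the result off directly from Proposition \ref{prop:res wedge}, combined with the definition of the cumulative resonance variety $\RR^i(X)=\bigcup_{j=0}^{i}\RR^j_1(X)$ recorded in \eqref{eq:depth1 res}. The crucial observation is that, for a wedge of two spaces with positive first Betti numbers, the degree-one, depth-one piece already fills up the whole first cohomology, so no work is needed in higher degrees.

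Concretely, I would argue as follows. Since $i\ge 1$, the union defining $\RR^i(X)$ contains the term $\RR^1_1(X)$, whence $\RR^i(X)\supseteq \RR^1_1(X)$. Now I invoke the $i=1$ case of Proposition \ref{prop:res wedge}: under the standing hypothesis that $b_1(X_1)$ and $b_1(X_2)$ are both positive, that case asserts $\RR^1_1(X)=H^1(X,\C)$. Combining these two facts gives $\RR^i(X)\supseteq H^1(X,\C)$. For the reverse inclusion, each $\RR^j_1(X)$ is by construction a subset of $A^1=H^1(X,\C)$, so their union is as well; hence $\RR^i(X)\subseteq H^1(X,\C)$ automatically. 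Equality therefore holds for every $1\le i\le k$.

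There is no genuine obstacle here: once Proposition \ref{prop:res wedge} is available the corollary is immediate, and the only role of the positivity of both first Betti numbers is to license the $i=1$ formula $\RR^1_1(X)=H^1(X,\C)$. I would note in passing that this hypothesis is genuinely used and cannot be dropped: if, say, $b_1(X_2)=0$, then $H^1(X,\C)=H^1(X_1,\C)$ and $\RR^i(X)$ reduces to $\RR^i(X_1)$, which need not be all of $H^1(X_1,\C)$ (for instance when $X_1=S^1$).
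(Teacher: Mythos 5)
Your proof is correct and matches the paper's treatment exactly: the paper states this corollary without a separate proof, as an immediate consequence of Proposition \ref{prop:res wedge}, and your argument ($\RR^i(X)\supseteq\RR^1_1(X)=H^1(X,\C)$ for $i\ge 1$ by the $i=1$ case of that proposition, with the reverse inclusion holding by definition) is precisely that intended deduction. One small caveat on your closing aside, which does not affect the proof: when $b_1(X_2)=0$ the identification $\RR^i(X)=\RR^i(X_1)$ is only guaranteed in degree $i=1$ (for instance $\RR^2(S^1\vee S^2)=\C$ while $\RR^2(S^1)=\{0\}$, since the extra $2$-dimensional cohomology is not hit by multiplication by $a\in H^1$), but the $i=1$ case alone already shows the positivity hypothesis cannot be dropped.
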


\section{Tangent cones to affine varieties}
\label{sec:tcones vars}

We now discuss two versions of the tangent 
cone to a subvariety of the complex algebraic torus 
$(\C^{\times})^n$. 

\subsection{The tangent cone}
\label{subsec:tcone}

We start by reviewing a well-known notion in algebraic 
geometry (see \cite[pp.~251--256]{Har}).  
Let $W\subset (\C^{\times})^n$ be a Zariski closed subset, 
defined by an ideal $I$ in the Laurent polynomial ring   
$\Lambda=\C[t_1^{\pm 1},\dots , t_n^{\pm 1}]$.   
Picking a finite generating set for $I$, and multiplying 
these generators with suitable monomials if necessary, 
we see that $W$ may also be defined by the ideal $I\cap R$ 
in the polynomial ring $R=\C[t_1,\dots,t_n]$.  

Now consider the polynomial ring $S=\C[z_1,\dots, z_n]$, 
and let $J$ be the ideal generated by the polynomials 
\begin{equation}
\label{eq:fg}
g(z_1,\dots, z_n)=f(z_1+1, \dots , z_n+1),
\end{equation}
for all $f\in I\cap R$.  Finally, let $\init (J)$ be the ideal in $S$ 
generated by the initial forms of all non-zero elements from $J$.  

\begin{definition}
\label{def:tcone}
The {\em tangent cone}\/ of $W$ at $1$ is the algebraic 
subset $\TC_1(W)\subset \C^n$ defined by the initial 
ideal $\init(J)\subset S$.
\end{definition}

The tangent cone $\TC_1(W)$ is a homogeneous 
subvariety of $\C^n$, depending only on the analytic 
germ of $W$ at the identity.  In particular, 
$\TC_1(W)\ne \emptyset$ if and only if $1\in W$.  
Moreover, $\TC_1$ commutes with finite unions, 
but not necessarily with intersections.  Explicit equations for 
the tangent cone to a variety can be found using the 
Gr\"{o}bner basis algorithm described in \cite[Proposition 15.28]{Ei}.

\subsection{The exponential tangent cone}
\label{subsec:exptcone}

A competing notion of tangent cone was introduced by 
Dimca, Papadima and Suciu in \cite{DPS-duke}, and 
further studied in \cite{PS-plms} and \cite{Su}.  

\begin{definition}
\label{def:exp tcone}
The {\em exponential tangent cone}\/ of $W$ at $1$ is 
the homogeneous subvariety $\tau_1(W)$ of $\C^n$, 
defined by 
\begin{equation*}
\label{eq:tau1}
\tau_1(W)= \{ z\in \C^n \mid \exp(\lambda z)\in W,\ 
\text{for all $\lambda\in \C$} \}.
\end{equation*}
\end{definition}

The exponential tangent cone $\tau_1(W)$ depends 
only on the analytic germ of $W$ at the identity.  
In particular, $\tau_1(W)\ne \emptyset$ if and only if $1\in W$.  
Moreover, $\tau_1$ commutes with finite unions, as  
well as arbitrary intersections.  The most important 
property of this construction is given in the following 
result from \cite{DPS-duke} (see \cite{Su} for full details).

\begin{theorem}[\cite{DPS-duke}]
\label{thm:tau1}
The exponential tangent cone $\tau_1(W)$ is a finite union 
of rationally defined linear subspaces of $\C^n$.
\end{theorem}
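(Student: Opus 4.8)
The plan is to reduce to the case of a single defining equation and then exploit the classical linear independence of exponential functions. Since the ideal $I$ defining $W$ is finitely generated, say $I=(f_1,\dots,f_s)$, we have $W=\bigcap_{j=1}^s V(f_j)$, and because $\tau_1$ commutes with (arbitrary, hence finite) intersections, $\tau_1(W)=\bigcap_{j=1}^s \tau_1(V(f_j))$. An intersection of finitely many finite unions of rationally defined linear subspaces is again such a union: one distributes the intersection over the unions, using that the intersection of two rationally defined linear subspaces is again a rationally defined linear subspace. Hence it suffices to prove the theorem for a hypersurface $W=V(f)$ with $f\in\Lambda$.

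So fix a nonzero Laurent polynomial $f=\sum_{a\in \supp(f)} c_a\, t^a$, where $t^a=t_1^{a_1}\cdots t_n^{a_n}$, $c_a\in\C^{\times}$, and $\supp(f)\subset\Z^n$ is finite. For $z\in\C^n$ and $\lambda\in\C$ one computes
\begin{equation*}
f(\exp(\lambda z))=\sum_{a\in \supp(f)} c_a\, e^{\lambda \langle a,z\rangle},
\end{equation*}
where $\langle a,z\rangle=\sum_{i=1}^n a_i z_i$. By definition, $z\in\tau_1(V(f))$ precisely when this expression vanishes for all $\lambda\in\C$. The key step is to rewrite this vanishing condition combinatorially. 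Grouping the exponents according to the value of the linear functional $a\mapsto\langle a,z\rangle$, the sum becomes $\sum_{\mu} \big(\sum_{a:\langle a,z\rangle=\mu} c_a\big)\, e^{\lambda\mu}$, a finite $\C$-linear combination of the functions $\lambda\mapsto e^{\lambda\mu}$ over distinct $\mu$. Since these functions are linearly independent over $\C$, the sum vanishes identically in $\lambda$ if and only if $\sum_{a:\langle a,z\rangle=\mu} c_a=0$ for every $\mu$; that is, the level sets of $a\mapsto\langle a,z\rangle$ partition $\supp(f)$ into blocks on each of which the coefficients $c_a$ sum to zero.

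To finish, I would organize this over the finitely many partitions of the finite set $\supp(f)$. Call a partition $\mathcal P=\{B_1,\dots,B_m\}$ admissible if $\sum_{a\in B_j} c_a=0$ for all $j$, and to each such $\mathcal P$ associate the linear subspace
\begin{equation*}
L_{\mathcal P}=\{z\in\C^n \mid \langle a-a',z\rangle=0 \text{ whenever } a,a' \text{ lie in a common block of }\mathcal P\}.
\end{equation*}
On $L_{\mathcal P}$ every level set of $a\mapsto\langle a,z\rangle$ is a union of blocks, hence has vanishing coefficient sum, so $L_{\mathcal P}\subseteq\tau_1(V(f))$; conversely, any $z\in\tau_1(V(f))$ lies in $L_{\mathcal P}$ for $\mathcal P$ its own level-set partition, which is admissible by the previous paragraph. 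Thus $\tau_1(V(f))=\bigcup_{\mathcal P\ \mathrm{admissible}} L_{\mathcal P}$ is a finite union of linear subspaces, and each $L_{\mathcal P}$ is cut out by the integral equations $\langle a-a',z\rangle=0$ with $a-a'\in\Z^n$, hence is rationally defined. Combined with the reduction of the first paragraph, this yields the theorem.

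The one point requiring genuine care is the linear independence of the exponentials $\lambda\mapsto e^{\lambda\mu}$, which I would justify by a Vandermonde or Wronskian argument (equivalently, by Artin's independence of characters); everything else is bookkeeping, and in particular rationality is preserved throughout because the defining equations have integer coefficients and the final intersection of rational subspaces stays rational.
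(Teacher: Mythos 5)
Your proposal is correct and takes essentially the same approach as the paper: you reduce to a single hypersurface $W=V(f)$ using that $\tau_1$ commutes with intersections, then identify $\tau_1(V(f))$ with the union of the rationally defined subspaces $L(\cS)$ indexed by admissible partitions $\cS$ of the support of $f$, which is exactly the paper's formula \eqref{eq:tau1w}. The linear independence of the exponentials $\lambda\mapsto e^{\lambda\mu}$ that you flag as the key analytic point is precisely the ingredient underlying that formula in the cited sources, so nothing is missing.
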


These subspaces can be described explicitly.  
By the above remarks, we may assume $W=V(f)$, where  
$f$ is a non-zero Laurent polynomial with $f(1)=0$. 
Write $f= \sum_{a\in S} c_a t_1^{a_1}\cdots t_n^{a_n}$, 
where $S$ is a finite subset of $\Z^n$, and $c_a\in \C\setminus \{0\}$ 
for each $a=(a_1,\dots,a_n)\in S$. We say a partition 
$\cS=(\cS_1 \, | \cdots | \, \cS_q)$ of the support $S$ is 
{\em admissible}\/ if $\sum_{a\in \cS_i} c_a =0$, for all 
$1\le i\le q$. For each such partition, let $L(\cS)$ 
be the rational linear subspace consisting of all points 
$x\in \Q^n$ for which the dot product $(a-b)\cdot x$ 
vanishes, for all $a,b\in \cS_i$ and all $1\le i\le q$. 
Then
\begin{equation}
\label{eq:tau1w}
\tau_1(W)= \bigcup_{\cS} L(\cS)\otimes \C,
\end{equation}
where the union is taken over all (maximal) admissible 
partitions of $S$.

\subsection{Relating the two tangent cones}
\label{subsec:tc exptc}

The next lemma (first noted in \cite{DPS-duke}) records 
a general relationship between the two kinds of tangent 
cones discussed here.  For completeness, we include 
a proof, along the lines of the proof given in \cite{PS-plms}, 
though slightly modified to fit the definition of $\TC_1$ 
given here.

\begin{lemma}[\cite{DPS-duke, PS-plms}] 
\label{lem:ttcone}
For every Zariski closed subset $W\subset (\C^{\times})^n$, 
we have $\tau_1(W)\subseteq \TC_1(W)$. 
\end{lemma}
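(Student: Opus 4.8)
The plan is to show that any element $z \in \tau_1(W)$ lies in $\TC_1(W)$ by extracting a linear condition from the vanishing of every defining polynomial of $W$ along the exponential curve $\lambda \mapsto \exp(\lambda z)$. Concretely, by the discussion in \S\ref{subsec:tcone}, the variety $W$ is cut out inside the polynomial ring $R$ by the ideal $I \cap R$, and $\TC_1(W)$ is defined by the initial forms of the shifted polynomials $g(z_1,\dots,z_n) = f(z_1+1,\dots,z_n+1)$ for $f \in I \cap R$. So it suffices to fix such an $f$, with associated shifted polynomial $g$ and lowest-degree (initial) form $\init(g)$, and prove that $\init(g)$ vanishes on $z$ whenever $z \in \tau_1(W)$.

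First I would take $z \in \tau_1(W)$, so that by Definition \ref{def:exp tcone} we have $\exp(\lambda z) \in W$ for all $\lambda \in \C$, and hence $f(\exp(\lambda z_1),\dots,\exp(\lambda z_n)) = 0$ identically in $\lambda$. Setting $t_j = \exp(\lambda z_j)$, this says $g(\exp(\lambda z_1)-1,\dots,\exp(\lambda z_n)-1) = 0$ for all $\lambda$, since $g$ is exactly $f$ after the unit shift $t_j \mapsto z_j + 1$. The key step is then to expand this identity in powers of $\lambda$ near $\lambda = 0$: writing $\exp(\lambda z_j) - 1 = \lambda z_j + O(\lambda^2)$, the substitution of these power series into $g$ produces a power series in $\lambda$ that vanishes identically. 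If $m$ is the order of $\init(g)$ (the degree of the lowest-degree homogeneous part of $g$), then the lowest-order term in $\lambda$ of $g(\lambda z + O(\lambda^2))$ is $\lambda^m \cdot \init(g)(z)$, because the leading contribution comes from substituting the linear parts $\lambda z_j$ into the initial form, and all lower-order contributions from $g$ vanish as $g$ has no terms of degree below $m$. Forcing the coefficient of $\lambda^m$ to be zero yields $\init(g)(z) = 0$.

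Since this argument applies to every $f \in I \cap R$ and hence to every generator of the initial ideal $\init(J)$, we conclude that $z$ satisfies all the defining equations of $\TC_1(W)$, giving $z \in \TC_1(W)$ and therefore $\tau_1(W) \subseteq \TC_1(W)$.

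The step I expect to be the main obstacle is the bookkeeping in the power-series expansion, specifically confirming that no cancellation or lower-order contamination disturbs the identification of the $\lambda^m$-coefficient with $\init(g)(z)$. The point to verify carefully is that the correction terms $O(\lambda^2)$ in each $\exp(\lambda z_j) - 1$ can only raise, never lower, the order in $\lambda$ of any monomial of $g$; combined with the fact that $g$ has no homogeneous component of degree less than $m$, this guarantees that the coefficient of $\lambda^m$ is precisely $\init(g)$ evaluated at the linear data $z$. A secondary subtlety worth noting is the reduction to principal ideals: strictly one works with a finite generating set of $I \cap R$, but since $\init$ of the ideal $J$ is generated by initial forms of \emph{all} nonzero elements of $J$, I would phrase the argument so that it applies to an arbitrary $f \in I \cap R$ (equivalently, the slightly modified definition of $\TC_1$ given here), which sidesteps any delicate claim that initial forms of generators generate the initial ideal.
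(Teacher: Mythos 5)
Your proposal is correct and follows essentially the same route as the paper's proof: both substitute the power series $\exp(\lambda z_j) = 1 + \lambda z_j + O(\lambda^2)$ into a defining polynomial $f \in I \cap R$, pass to the shifted polynomial $g$, and read off that the coefficient of $\lambda^m$ in the resulting identically vanishing series is $\init(g)(z)$, forcing $\init(g)(z)=0$. Your two flagged subtleties — that the $O(\lambda^2)$ corrections can only raise the order in $\lambda$, and that the argument should be run for an arbitrary $f \in I \cap R$ rather than just generators — are exactly the points the paper handles (the latter implicitly, since the shift $f \mapsto g$ is an automorphism carrying $I \cap R$ onto $J$), so no gap remains.
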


\begin{proof}
Without loss of generality, we may assume $1\in W$. 
Let $f\in R$ be a non-zero polynomial in $I=I(W)$, 
let $g\in S$ be the polynomial defined by \eqref{eq:fg}, 
and let $g_0=\init(g)$. We then have
\begin{align*}
f(e^{\lambda z_1},\dots, e^{\lambda z_n})
&=f(1+\lambda z_1+O(\lambda^2) ,\dots, 1+\lambda z_n+O(\lambda^2)  )\\
&=g(\lambda z_1+O(\lambda^2)  ,\dots,\lambda z_n +O(\lambda^2) )\\
&=g_0(\lambda z_1 ,\dots,\lambda z_n  )+\text{higher order terms}.
\end{align*}

Now suppose $z\in \tau_1(W)$, that it to say, 
$f(e^{\lambda z_1},\dots, e^{\lambda z_n})=0$, 
for all $\lambda\in \C$. The above calculation  
shows that $g_0(\lambda z)=0$, for all $\lambda$;  
in particular, $g_0(z)=0$.  We conclude that $z\in \TC_1(W)$. 
\end{proof}

If $W$ is an algebraic subtorus of $(\C^{\times})^n$, then 
$\tau_1(W)=\TC_1(W)$, and both coincide with the tangent 
space at the origin, $T_1(W)$.    In general, though, 
the inclusion from Lemma \ref{lem:ttcone} can be strict, 
even when $1$ is a smooth point of $W$.  Here is an 
example illustrating this phenomenon. 

\begin{example}
\label{ex:chain link} 
Let $W$ be the  hypersurface in $(\C^{\times})^3$ 
with equation $ t_1+t_2+t_3-t_1t_2-t_1t_3-t_2t_3=0$.  Then 
$\tau_1(W)$ is a union of $3$ lines in $\C^3$, given by 
the equations $ z_1=z_2+z_3=0$,  $z_2=z_1+z_3=0$,  
and $z_3=z_1+z_2=0$. On the other hand, 
$\TC_1(W)$ is a plane in $\C^3$, 
with equation $z_1+z_2+z_3=0$.  Hence, $\TC_1(W)$ 
strictly contains $\tau_1(W)$. 
\end{example}

\section{Characteristic varieties}
\label{sec:cvs}

In this section, we briefly review the characteristic 
varieties of a space, and their relation to the 
resonance varieties. 

\subsection{Jump loci for twisted homology}
\label{subsec:jumps} 
Let $X$ be a connected CW-complex with finite $k$-skeleton, 
$k\ge 1$.  Without loss of generality, we may assume $X$ 
has a single $0$-cell, call it $x_0$.  Let $G=\pi_1(X,x_0)$ 
be the fundamental group of $X$, and let $\wG=\Hom(G,\C^{\times})$ 
be the group of complex characters of $G$.  Clearly, 
$\wG=\wG_{\ab}$, where $G_{\ab}=H_1(X,\Z)$ is the 
abelianization of $G$.  Thus, the universal coefficient 
theorem allows us to identify 
\begin{equation}
\label{eq:wg}
\wG = H^1(X,\C^{\times}).
\end{equation}

Each character $\rho\colon G\to \C^{\times}$ determines 
a rank~$1$ local system $\L_{\rho}$ on our space $X$.  
Computing the homology groups of $X$ with coefficients 
in such local systems leads to a natural filtration of the 
character group. 

\begin{definition}
\label{def:cvs}
The {\em characteristic varieties}\/ of $X$ are the sets 
\begin{equation*}
\label{eq:cvs}
\VV^i_d(X)=\set{\rho \in H^1(X,\C^{\times})  
\mid \dim_{\C} H_i(X,\L_{\rho})\ge d},
\end{equation*}
defined for all $0\le i\le k$ and all $d>0$. 
\end{definition}

Clearly, $1\in \VV^i_d(X)$ if and only if $d\le b_i(X)$. 
In degree $0$, we have $\VV^0_1(X)= \{ 1\}$ 
and $\VV^0_d (X)=\emptyset$, for $d>1$. 
In degree $1$, the sets $\VV^i_d(X)$ depend only 
on the group $G=\pi_1(X,x_0)$---in fact, only on its 
maximal metabelian quotient, $G/G''$.

The jump loci $\VV^i_d(X)$ are Zariski closed subsets 
of the algebraic group $H^1(X,\C^{\times})$; moreover, these 
varieties are homotopy-type invariants of $X$.  For details 
and further references, see \cite{Su}.

\subsection{Depth one characteristic varieties}
\label{subsec:v1} 
Most important for us are the depth one characteristic 
varieties, $\VV^i_1(X)$, and their unions up to a fixed 
degree, $\VV^i(X)=\bigcup_{j=0}^{i} \VV^j_1(X)$. 
These varieties yield an ascending filtration of 
the character group, 
\begin{equation}
\label{eq:filt asc}
\{1\}=\VV^0(X) \subseteq \VV^1(X) \subseteq  \cdots 
\subseteq \VV^k(X) \subseteq H^1(X,\C^{\times}).
\end{equation}

Let $\wG^0=H^1(X,\C^{\times})^0$ be the identity 
component of the character group $\wG$. Writing 
$n=b_1(X)$, we may identify $\wG^0$ with the 
complex algebraic torus $(\C^{\times})^n$. 
Set $\WW^i_d(X)=\VV^i_d(X)\cap \wG^0$, and 
$\WW^i(X)=\bigcup_{j=0}^{i} \WW^j_1(X)$.  We 
then have 
\begin{equation}
\label{eq:wi}
\WW^i(X)=\VV^i(X)\cap \wG^0.  
\end{equation}

\begin{example}
\label{ex:cv link}
Let $L$ be an $n$-component link in $S^3$, with complement 
$X$.   Using a basis for $H_1(X,\Z)=\Z^n$ given by 
(oriented) meridians, we may identify 
$H^1(X,\C^{\times})=(\C^{\times})^n$.  Then 
\begin{equation}
\label{eq:cv link}
\WW^1(X)= 
\{z\in (\C^{\times})^n \mid \Delta_L(z)=0\} \cup \{1\},
\end{equation}
where $\Delta_L=\Delta_L(t_1,\dots ,t_n)$ is 
the Alexander polynomial of the link. 
For details and references, see \cite{S09}.
\end{example}

As shown in \cite{PS-plms} (see also \cite{Su}), the 
characteristic varieties satisfy product and wedge 
formulas similar to those satisfied by the resonance 
varieties. 

\begin{prop}
\label{prop:cv pw}
Let $X_1$ and $X_2$ be connected CW-complexes 
with finite $k$-skeleton, and fix an integer  $1\le i\le k$. 
Then
\begin{enumerate}
\item \label{cp1}
$\WW^i(X_1\times X_2)= 
\bigcup_{p+q=i} \WW^{p}(X_1) \times \WW^{q}(X_2)$.
\item \label{cp2}
Suppose, moreover, that $b_1(X_1)>0$ and $b_1(X_2)>0$. 
Then $\WW^i(X_1\vee X_2)= H^1(X_1\vee X_2,\C^{\times})^0$.
\end{enumerate}
\end{prop}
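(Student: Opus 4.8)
The plan is to treat the two parts separately, in each case reducing the twisted homology of the product (resp. wedge) to that of the factors via a standard tool: the Künneth theorem for part~\eqref{cp1}, and the Mayer--Vietoris sequence for part~\eqref{cp2}. Throughout I work over the field $\C$, so there are no Tor terms to worry about, and I exploit that $\pi_1(X_1\times X_2)=\pi_1(X_1)\times\pi_1(X_2)$ and $\pi_1(X_1\vee X_2)=\pi_1(X_1)*\pi_1(X_2)$, so that any $\rho$ in the identity component $H^1(-,\C^{\times})^0=H^1(X_1,\C^{\times})^0\times H^1(X_2,\C^{\times})^0$ splits as a pair $(\rho_1,\rho_2)$ of characters of the factors.

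For part~\eqref{cp1}, I would first observe that the rank-$1$ local system attached to $\rho=(\rho_1,\rho_2)$ is the external tensor product $\L_\rho=\L_{\rho_1}\boxtimes\L_{\rho_2}$, whose restrictions to the factors are $\L_{\rho_1}$ and $\L_{\rho_2}$. The Künneth theorem then gives
\[
H_i(X_1\times X_2,\L_\rho)=\bigoplus_{p+q=i} H_p(X_1,\L_{\rho_1})\otimes_{\C} H_q(X_2,\L_{\rho_2}).
\]
Hence $\rho\in\WW^i(X_1\times X_2)$, i.e. $H_j(X_1\times X_2,\L_\rho)\ne 0$ for some $j\le i$, if and only if there exist $p,q\ge 0$ with $p+q\le i$ such that both $H_p(X_1,\L_{\rho_1})\ne 0$ and $H_q(X_2,\L_{\rho_2})\ne 0$. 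Unwinding the inner unions in the right-hand side $\bigcup_{p+q=i}\WW^p(X_1)\times\WW^q(X_2)$ yields exactly the same condition on $(\rho_1,\rho_2)$, and matching the two descriptions proves the equality.

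For part~\eqref{cp2}, since $\WW^1(X)\subseteq\WW^i(X)\subseteq H^1(X,\C^{\times})^0$ for $1\le i\le k$, it suffices to show $\WW^1(X_1\vee X_2)$ fills the whole identity component. Writing $X_1\vee X_2=X_1\cup X_2$ with $X_1\cap X_2=\pt$, and noting that $\L_\rho$ restricts to $\L_{\rho_j}$ on $X_j$ and to the trivial system $\C$ on the wedge point, I would feed these into the Mayer--Vietoris sequence in $\L_\rho$-twisted homology. As $H_1(\pt)=0$, the low-degree portion reads
\[
0\to H_1(X_1,\L_{\rho_1})\oplus H_1(X_2,\L_{\rho_2})\to H_1(X_1\vee X_2,\L_\rho)\to \C \xrightarrow{\alpha} H_0(X_1,\L_{\rho_1})\oplus H_0(X_2,\L_{\rho_2}),
\]
so $\dim H_1(X_1\vee X_2,\L_\rho)=\dim H_1(X_1,\L_{\rho_1})+\dim H_1(X_2,\L_{\rho_2})+\dim\ker\alpha$. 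A short case analysis, using $H_0(X_j,\L_{\rho_j})=\C$ when $\rho_j=1$ and $=0$ when $\rho_j\ne 1$, finishes the job: if both $\rho_j\ne 1$ then $\alpha=0$ and $\dim\ker\alpha=1$, so $H_1\ne 0$; if exactly one is trivial, say $\rho_2=1$, then $\alpha$ is injective but $H_1(X_2,\C)$ contributes $b_1(X_2)>0$, so $H_1\ne 0$; and for $\rho=1$ we are done already since $1\in\WW^0\subseteq\WW^1$. Thus $H_1(X_1\vee X_2,\L_\rho)\ne 0$ for every $\rho$ in the identity component, giving $\WW^1(X_1\vee X_2)=H^1(X_1\vee X_2,\C^{\times})^0$, and hence the same for all $\WW^i$ with $1\le i\le k$.

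The main obstacle, and the only step requiring genuine care, is the bookkeeping of the degree-zero terms in the Mayer--Vietoris sequence for part~\eqref{cp2}: one must correctly identify $H_0(X_j,\L_{\rho_j})$ and the connecting map $\alpha$ according to the (non)triviality of each $\rho_j$, because it is precisely the hypothesis $b_1(X_1),b_1(X_2)>0$ together with the extra dimension coming from $\ker\alpha$ that forces $H_1$ to be nonzero across the entire torus. By contrast, part~\eqref{cp1} is routine once the identification $\L_\rho=\L_{\rho_1}\boxtimes\L_{\rho_2}$ is checked to be compatible with the splitting of the identity component.
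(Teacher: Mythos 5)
Your proof is correct, both in its statements and in the places where it locates the real work. One caveat about the comparison you were asked to make: the paper itself contains no proof of this proposition --- it is quoted from \cite{PS-plms} (see also \cite{Su}) --- so your write-up supplies an argument where the paper has only a citation. The route you take is the standard one, and is in substance what the cited source does: for the product, the chain-level identification of the twisted chain complex of $X_1\times X_2$ with the tensor product of the twisted chain complexes of the factors, followed by the K\"{u}nneth formula over the field $\C$ (no Tor terms), and then the bookkeeping $\bigcup_{p+q=i}\WW^p(X_1)\times\WW^q(X_2)=\bigcup_{p'+q'\le i}\WW^{p'}_1(X_1)\times\WW^{q'}_1(X_2)$, which you unwind correctly; for the wedge, the Mayer--Vietoris sequence with coefficients in $\L_\rho$, using $H_0(X_j,\L_{\rho_j})=\C$ or $0$ according to whether $\rho_j$ is trivial. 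Two minor remarks. First, in the mixed case $\rho_1\ne 1$, $\rho_2=1$ of part (2), the injectivity of $\alpha$ is true but plays no role: the nonvanishing of $H_1(X_1\vee X_2,\L_\rho)$ comes solely from the summand $H_1(X_2,\C)\cong\C^{b_1(X_2)}$, which is exactly where the hypothesis $b_1(X_j)>0$ enters --- you say this, but the phrase ``$\alpha$ is injective but'' suggests a dependence that is not there. Second, it is worth noting explicitly that restriction of characters carries the identity component $H^1(X_1\times X_2,\C^{\times})^0$ (resp.\ of the wedge) onto $H^1(X_1,\C^{\times})^0\times H^1(X_2,\C^{\times})^0$, since the proposition is stated for the varieties $\WW^i$, which by definition live in the identity component; this is immediate because the restriction map is a morphism of algebraic groups, but it is the one compatibility your reduction to pairs $(\rho_1,\rho_2)$ silently uses.
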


\subsection{Characteristic subspace arrangements}
\label{subsec:df subarr}

As before, let $X$ be a connected CW-complex with 
finite $k$-skeleton.  Set $n=b_1(X)$, and identify 
$H^1(X, \C)=\C^n$ and $H^1(X, \C^{\times})^0=(\C^{\times})^n$.   
Applying Theorem  \ref{thm:tau1} to the characteristic 
varieties $\WW^i(X)\subseteq (\C^{\times})^n$
leads to the following definition.  

\begin{definition}
\label{def:df subarr}
For each $i\le k$, the {\em $i$-th characteristic arrangement}\/ 
of $X$, denoted $\CC_i(X)$, is the subspace arrangement in 
$H^1(X,\Q)$ whose complexified union is the exponential 
tangent cone to $\WW^i(X)$:
\begin{equation*}
\label{eq:dfxi}
\tau_1( \WW^i(X)) =  
\bigcup_{L\in \CC_i(X)} L\otimes \C.
\end{equation*}
\end{definition}

We thus have a sequence $\CC_0(X)$, $\CC_1(X), \dots , 
\CC_k(X)$ of rational subspace arrangements, all lying in 
the same affine space $H^1(X,\Q)=\Q^n$.  As noted in 
\cite[Lemma 7.7]{Su}, these subspace arrangements 
depend only on the homotopy type of $X$.  

\subsection{Tangent cone and resonance}
\label{subsec:tcone res}

Of great importance in the theory of cohomology jumping 
loci is the relationship between characteristic and 
resonance varieties, based on the tangent cone 
construction. A foundational result in this direction 
is the following theorem of Libgober \cite{Li02}.

\begin{theorem}[\cite{Li02}]
\label{thm:lib}
The tangent cone at $1$ to $\WW_d^i(X)$ is included 
in $\RR_d^i(X)$, for all $i\le k$ and $d>0$.  
\end{theorem}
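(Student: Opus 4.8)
The plan is to exhibit both $\WW^i_d(X)$ and $\RR^i_d(X)$ as determinantal loci attached to the same sequence of matrices — the equivariant (Alexander) differentials and their linear parts — and to show that taking $\TC_1$ of the first simply passes to the second, which by Theorem \ref{thm:linaom} is the Aomoto differential. First I would fix a minimal cell structure (in the non-minimal case one works with an algebraic minimal model over the completion $\widehat{\Lambda}$; see the last paragraph), so that the equivariant cochain complex of $X^{\ab}$ has differentials $\delta^{\bullet}\colon A^{\bullet}\otimes_{\C}\Lambda\to A^{\bullet+1}\otimes_{\C}\Lambda$ over $\Lambda=\C[t_1^{\pm1},\dots,t_n^{\pm1}]$ with $\delta^{\bullet}(1)=0$ and linear part $\gr(\delta^{\bullet})^{\linn}=d^{\bullet}$, the universal Aomoto differential. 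Evaluating at $\rho$ computes $H^{\bullet}(X,\L_{\rho})$; since $\WW^i_d(X)$ is equally described by the jumps of twisted cohomology (the two being interchanged by $\rho\mapsto\rho^{-1}$, which fixes $1$ and so does not affect $\TC_1$), we get that $\rho\in\WW^i_d(X)$ exactly when $\rank\delta^{i-1}(\rho)+\rank\delta^{i}(\rho)\le c_i-d$, where $c_i$ is the number of $i$-cells, while by Lemma \ref{lem:res ideal} $a\in\RR^i_d(X)$ exactly when $\rank d^{i-1}(a)+\rank d^{i}(a)\le c_i-d$.

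The technical core is a single-matrix statement: for a matrix $M$ over $\Lambda$ with $M(1)=0$ and linear part $M^{\linn}$, setting $D_r(M)=\{\rho:\rank M(\rho)\le r\}$, one has $\TC_1(D_r(M))\subseteq\{z:\rank M^{\linn}(z)\le r\}$. To prove it I would substitute $t_j=1+z_j$; since $M(1)=0$, every entry becomes a polynomial in $z_1,\dots,z_n$ with zero constant term whose degree-$1$ part is the corresponding entry of $M^{\linn}$. Consequently each $(r{+}1)\times(r{+}1)$ minor $\mu$ of $M$, viewed in $\C[z_1,\dots,z_n]$, has order $\ge r{+}1$, and its degree-$(r{+}1)$ homogeneous component is precisely the corresponding minor of $M^{\linn}$. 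Hence whenever that minor of $M^{\linn}$ is a nonzero polynomial it equals $\init(\mu)$ and lies in the initial ideal $\init(J)$ defining $\TC_1(D_r(M))$, and when it vanishes identically there is nothing to check. Either way every $(r{+}1)$-minor of $M^{\linn}$ vanishes on $\TC_1(D_r(M))$, which is the claim.

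With the lemma in hand I would assemble the two rank conditions exactly as in the proof of Lemma \ref{lem:res ideal}, writing
\[
\WW^i_d(X)=\bigcap_{\substack{r+s=c_i-d+1\\ r,s\ge 0}}
\big(D_{r-1}(\delta^{i})\cup D_{s-1}(\delta^{i-1})\big).
\]
Because $\TC_1$ is monotone it sub-commutes with intersections, and it commutes with finite unions, so $\TC_1(\WW^i_d(X))$ is contained in the intersection over $r+s=c_i-d+1$ of $\TC_1(D_{r-1}(\delta^{i}))\cup\TC_1(D_{s-1}(\delta^{i-1}))$. Applying the single-matrix lemma to $M=\delta^{i}$ and $M=\delta^{i-1}$, and using $(\delta^{i})^{\linn}=d^{i}$ and $(\delta^{i-1})^{\linn}=d^{i-1}$, replaces each $\TC_1(D_{r-1}(\delta^{\bullet}))$ by $D_{r-1}(d^{\bullet})$. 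The resulting intersection of unions is precisely the determinantal description of $\RR^i_d(X)$, giving $\TC_1(\WW^i_d(X))\subseteq\RR^i_d(X)$.

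The main obstacle is the hypothesis $M(1)=0$, i.e. the reduction to a minimal model: only then is the linear part of $\delta^{\bullet}$ the Aomoto differential $d^{\bullet}$. For a minimal CW-complex this is exactly Theorem \ref{thm:linaom}; for arbitrary finite-type $X$ one must first split off the acyclic constant part of $C_{\bullet}(X^{\ab},\C)\otimes_{\Lambda}\widehat{\Lambda}$ over the local ring $\widehat{\Lambda}$ to obtain an algebraic minimal model, whose linearization is again the universal Aomoto complex; since both the germ of $\WW^i_d(X)$ at $1$ and the operation $\TC_1$ depend only on this completed complex, the computation above is unaffected. A secondary point worth flagging is that $\init(J)$ is generated by the initial forms of \emph{all} elements of $J$, not merely of the minors; but since I only need the one-way inclusion $\TC_1(\WW^i_d(X))\subseteq\RR^i_d(X)$, it suffices that the minors of $M^{\linn}$ themselves lie in $\init(J)$, so the minors alone are enough.
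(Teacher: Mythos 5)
The paper does not actually prove this statement: Theorem \ref{thm:lib} is quoted from Libgober \cite{Li02}, whose argument is deformation-theoretic (one considers analytic one-parameter families of local systems through the identity and degenerates their twisted cohomology to Aomoto cohomology), and in particular makes no use of minimal cell structures. Your route is genuinely different and more algebraic: both jump loci are written as lattices of determinantal conditions, and the key observation is that, after the substitution $t_j=1+z_j$, each $(r{+}1)$-minor of an equivariant differential $\delta$ has order $\ge r{+}1$ with lowest homogeneous piece equal to the corresponding minor of $\delta^{\linn}$. That lemma is correct, the direction of the inclusion is handled correctly (you only need the minors of the linear part to lie in $\init(J)$, not to generate it), and the assembly via monotonicity of $\TC_1$ under intersections and its compatibility with finite unions is sound. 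One small bookkeeping point: entries of $\delta$ are Laurent polynomials, so one must clear monomial denominators before taking initial forms; this is harmless since a monomial becomes a unit with value $1$ at $z=0$, leaving initial forms unchanged.

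The genuine issue is the reduction on which everything rests. Theorem \ref{thm:lib} is stated for an arbitrary connected CW-complex with finite $k$-skeleton, whereas your identification of the linear part of $\delta^{\bullet}$ with the Aomoto differential $d^{\bullet}$ is Theorem \ref{thm:linaom}, which requires a \emph{minimal} CW structure. Such a structure need not exist (minimality forces $H_*(X,\Z)$ torsion-free in the relevant range, and is a constraint on the homotopy type, not something one can always arrange), and without it the ranks do not even match: $C_i$ has rank $c_i\neq b_i$, so the linear part of the unreduced complex cannot be the Aomoto complex. Your proposed fix --- complete at the augmentation ideal, split off a contractible summand over the local ring $\widehat{\Lambda}$, and assert that the linearization of the resulting minimal complex is again the universal Aomoto complex --- is the right idea, but that assertion is exactly the nontrivial content in the non-minimal case. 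It does not follow from Theorem \ref{thm:linaom}; proving it amounts to identifying the linear part of the $\widehat{\Lambda}$-minimal model with the $d^1$ differential of the $I$-adic spectral sequence of $C_{\bullet}(X^{\ab},\C)$, and then identifying that $d^1$ with the cup-product (Aomoto) differential, which is the general form of the Papadima--Suciu linearization theorem in \cite{PS-tams}. With that result invoked explicitly, your proof is complete; as written, the crucial step is asserted rather than proved. (Libgober's argument sidesteps this entirely, which is one reason the statement holds in the stated generality.)
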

 
Since tangent cones commute with finite unions, 
we get: 
\begin{equation}
\label{eq:lib}
\TC_1(\WW^i(X))\subseteq \RR^i(X), \quad
\text{for all $i\le k$}.
\end{equation}

For many spaces of interest, inclusion \eqref{eq:lib} 
holds as an equality.  In general, though, the inclusion is strict. 

\begin{example}
\label{ex:tc heis}
Let $M$ be the $3$-dimensional Heisenberg nilmanifold.   
It is easily seen that $\WW^1(M)=\{1\}$; hence, 
$\TC_1(\WW^1(M))=\{0\}$.  On the other hand, 
$\RR^1(M)=\C^2$, since the cup product vanishes on 
$H^1(M,\C)$. 
\end{example}

\section{Straight spaces}
\label{sec:straight}

We now delineate a class of spaces for which 
the characteristic and resonance varieties have a 
simple nature, and are intimately related to each other 
via the tangent cone constructions discussed 
in \S\ref{sec:tcones vars}.

\subsection{Straightness}
\label{subsec:straight}

As before, let $X$ be a connected CW-complex with finite 
$k$-skeleton.  For each $i\le k$, consider the following 
conditions on the varieties $\WW^i(X)$ and $\RR^i(X)$:

\begin{alphenum}
\item \label{v1}
All components of $\WW^{i}(X)$ passing through the origin 
are algebraic subtori.
\item \label{v2}
$\TC_1(\WW^{i}(X))=\RR^{i}(X)$. 
\item \label{v3}
All components of $\WW^{i}(X)$ not passing through the 
origin are $0$-dimensional.
\end{alphenum}

\begin{definition}
\label{def:loc straight}
We say $X$ is {\em locally $k$-straight}\/ if  
conditions \eqref{v1} and \eqref{v2} hold, for each $i\le k$.  
If these conditions hold for all $k\ge 1$, we say $X$ is a 
{\em locally straight space}.
\end{definition}

\begin{definition}
\label{def:straight}
We say $X$ is {\em $k$-straight}\/ if conditions 
\eqref{v1}, \eqref{v2}, and \eqref{v3} hold, for each $i\le k$.  
If these conditions hold for all $k\ge 1$, we say $X$ is a 
{\em straight space}.
\end{definition}

Clearly, the $k$-straightness property depends only 
on the homotopy type of a given space.  In view of this observation, 
we may declare a group $G$ to be $k$-straight if there is 
a classifying space $K(G,1)$ which is $k$-straight; 
in particular, such a group $G$ must be of type $\FF_k$, 
i.e., have a $K(G,1)$ with finite $k$-skeleton. 
Note that a space is $1$-straight if and only if 
its fundamental group is $1$-straight.  Similar 
considerations apply to local straightness.

The straightness conditions are quite stringent, and thus 
easily violated. Here are a few examples when this happens, 
all for $k=1$. 

\begin{example}
\label{ex:straight a}
The closed three-link chain $L$ (the link $6^3_1$ 
from Rolfsen's tables) has Alexander polynomial
$\Delta_L=t_1+t_2+t_3-t_1t_2-t_1t_3-t_2t_3$.  
Let $X=S^3\setminus L$ be the link complement. 
By \eqref{eq:cv link}, the characteristic variety 
$W=\WW^1(X)$ has equation $\Delta_L=0$. 
On the other hand, we know 
from Example \ref{ex:chain link}  that 
$\tau_1(W)\ne \TC_1(W)$. Thus, $W$ is not 
an algebraic torus, and so $X$ is not locally 
$1$-straight:  condition \eqref{v1} fails.  
\end{example}

\begin{example} 
\label{ex:straight b}
The Heisenberg nilmanifold $M$ 
from Example \ref{ex:tc heis} is not locally $1$-straight:  
condition \eqref{v1} is met, but not condition \eqref{v2}.
\end{example}

\begin{example}
\label{ex:straight c}
Let $G=\langle x_1, x_2 \mid x_1^2 x_2 = x_2 x_1^2 \rangle$ 
be the group from \cite[Example 6.4]{PS-plms}. 
Then $\WW^1(G)=\{1\}\cup \{(t_1,t_2)\in (\C^{\times})^2 \mid t_1=-1\}$, 
and $\RR^1(G)=\{0\}$.  Thus, $G$ is locally $1$-straight, 
but not $1$-straight:  conditions \eqref{v1} and \eqref{v2} 
are met, but not condition \eqref{v3}.
\end{example}

Nevertheless, as we will see below, there is an abundance 
of interesting spaces which are, to a degree on another, straight. 

\subsection{Examples and discussion}
\label{subsec:straight examples}
  
To start with, let us consider the case when the first 
characteristic variety $\WW^1(X)$ is as big as it can be. 

\begin{lemma}
\label{lem:straight}
Let $X$ be a finite-type CW-complex.
Suppose that $\WW^1(X)=H^1(X,\C^{\times})^{0}$.  
Then $X$ is straight. 
\end{lemma}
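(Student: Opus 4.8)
The plan is to verify the three straightness conditions \eqref{v1}, \eqref{v2}, \eqref{v3} for each $i\le k$, exploiting the fact that once $\WW^1(X)$ fills up the whole identity component $\wG^0=(\C^\times)^n$, all the higher characteristic varieties $\WW^i(X)$ (for $i\ge 1$) are forced to equal $\wG^0$ as well, and the corresponding resonance varieties fill up $H^1(X,\C)=\C^n$. The key observation driving everything is the nesting filtration \eqref{eq:filt asc}: since $\WW^1(X)\subseteq\WW^i(X)\subseteq\wG^0$ and the hypothesis gives $\WW^1(X)=\wG^0$, we immediately deduce $\WW^i(X)=\wG^0$ for all $i\ge 1$.

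First I would establish the resonance side. From $\WW^i(X)=\wG^0=(\C^\times)^n$ I compute the tangent cone: since $\wG^0$ is itself an algebraic torus, $\TC_1(\wG^0)=T_1(\wG^0)=\C^n$, using the remark following Lemma \ref{lem:ttcone} that $\tau_1=\TC_1=T_1$ for subtori. Then Theorem \ref{thm:lib} (Libgober's inclusion \eqref{eq:lib}) forces $\C^n=\TC_1(\WW^i(X))\subseteq\RR^i(X)\subseteq H^1(X,\C)=\C^n$, so $\RR^i(X)=\C^n$ for all $1\le i\le k$. This simultaneously verifies condition \eqref{v2}, since both $\TC_1(\WW^i(X))$ and $\RR^i(X)$ equal $\C^n$.

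Next I would dispatch conditions \eqref{v1} and \eqref{v3}, which concern the components of $\WW^i(X)$. Because $\WW^i(X)=(\C^\times)^n$ is itself a single algebraic subtorus (the whole identity component), it has exactly one irreducible component, namely $\wG^0$ itself, which passes through the origin $1$ and is an algebraic subtorus; this settles \eqref{v1}. Condition \eqref{v3} asks that every component \emph{not} passing through the origin be $0$-dimensional, and this holds vacuously since there are no such components. The only case needing separate attention is $i=0$, where $\WW^0(X)=\{1\}$ by the remark following Definition \ref{def:cvs} and $\RR^0(X)=\{0\}$ by \eqref{eq:filt upres}, so all three conditions hold trivially there as well.

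There is essentially no hard obstacle here: the statement is a clean consequence of the filtration structure together with Libgober's theorem. The one point requiring minor care is confirming that the hypothesis $\WW^1(X)=\wG^0$ genuinely propagates upward to all degrees $i\le k$ rather than just bounding them, but this is immediate from the monotonicity of the filtration \eqref{eq:filt asc} sandwiched inside $\wG^0$. I would present the argument by first recording $\WW^i(X)=\wG^0$ for all $i$, then reading off \eqref{v1} and \eqref{v3} directly, and finally invoking Theorem \ref{thm:lib} to obtain $\RR^i(X)=\C^n$ and hence \eqref{v2}.
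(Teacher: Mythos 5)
Your proof is correct and follows the same route as the paper's: the ascending filtration forces $\WW^i(X)=H^1(X,\C^{\times})^0$ for all $i\ge 1$, which makes conditions \eqref{v1} and \eqref{v3} automatic, and condition \eqref{v2} follows from Libgober's inclusion \eqref{eq:lib} exactly as you argue. Your explicit computation of $\TC_1(\wG^0)=\C^n$ and the separate check of the case $i=0$ are just details the paper leaves implicit.
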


\begin{proof}
Our assumption forces $\WW^i(X)=H^1(X,\C^{\times})^{0}$, 
for all $i\ge 1$.  Straightness conditions \eqref{v1} 
and \eqref{v3} are automatically met, while 
condition \eqref{v2}  follows from \eqref{eq:lib}. 
\end{proof}

Examples of spaces satisfying the hypothesis of the above 
lemma include the Riemann surfaces $S_g$ with $g\ge 2$.

\begin{lemma}
\label{lem:straight crit}
Let $X$ be a CW-complex with finite $k$-skeleton, $k\ge 1$.
Suppose that $\WW^k(X)$ is finite, and $\RR^k(X)=\{0\}$.  
Then $X$ is $k$-straight. 
\end{lemma}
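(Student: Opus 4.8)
The plan is to verify the three defining conditions \eqref{v1}, \eqref{v2}, and \eqref{v3} of Definition \ref{def:straight} for every index $i\le k$, in the same spirit as the proof of Lemma \ref{lem:straight}. The first step is to push both hypotheses down the respective filtrations. Since the characteristic varieties form an ascending chain $\WW^0(X)\subseteq \cdots \subseteq \WW^k(X)$ as in \eqref{eq:filt asc}, and $\WW^k(X)$ is finite by assumption, each $\WW^i(X)$ with $i\le k$ is itself a finite subset of $\wG^0=(\C^{\times})^n$. Likewise, the resonance filtration \eqref{eq:filt upres} sandwiches $\{0\}=\RR^0(X)\subseteq \RR^i(X)\subseteq \RR^k(X)=\{0\}$, so $\RR^i(X)=\{0\}$ for all $i\le k$.

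Conditions \eqref{v3} and \eqref{v1} then follow immediately. Because $\WW^i(X)$ is finite, each of its irreducible components is a single point, hence $0$-dimensional; this is exactly \eqref{v3}. Moreover, since $\WW^0(X)=\{1\}$ sits inside every $\WW^i(X)$, the origin $1$ does lie in $\WW^i(X)$, and the only component passing through it is the point $\{1\}$, which is the trivial, $0$-dimensional algebraic subtorus of $(\C^{\times})^n$. This establishes \eqref{v1}.

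For condition \eqref{v2} I would invoke the inclusion \eqref{eq:lib}, which reads $\TC_1(\WW^i(X))\subseteq \RR^i(X)=\{0\}$. Since $1\in \WW^i(X)$, the tangent cone is nonempty and, being a homogeneous subvariety of $\C^n$, contains the origin; a homogeneous variety that is contained in $\{0\}$ and contains $0$ must equal $\{0\}$. Hence $\TC_1(\WW^i(X))=\{0\}=\RR^i(X)$, which is precisely \eqref{v2}. As all three conditions hold for each $i\le k$, the space $X$ is $k$-straight.

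The argument is essentially formal once the hypotheses are propagated along the two filtrations, so there is no real analytic difficulty to surmount. The one point meriting care is extracting an \emph{equality} rather than a mere inclusion in \eqref{v2}: this is why I record explicitly that $1\in \WW^i(X)$, which guarantees that $\TC_1(\WW^i(X))$ is nonempty and thus forced to be all of $\{0\}$ instead of empty.
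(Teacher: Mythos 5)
Your proof is correct and follows essentially the same route as the paper's: finiteness of $\WW^i(X)$ for $i\le k$ (via the ascending filtration) gives conditions \eqref{v1} and \eqref{v3} trivially, and the inclusion \eqref{eq:lib} combined with $1\in\WW^i(X)$ forces $\TC_1(\WW^i(X))=\{0\}=\RR^i(X)$, which is \eqref{v2}. The only difference is that you spell out the propagation of the hypotheses down the two filtrations and the nonemptiness argument for the tangent cone, which the paper leaves implicit.
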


\begin{proof}
Since $\WW^k(X)$ is finite, conditions \eqref{v1} and \eqref{v3} 
are trivially satisfied. By formula \eqref{eq:lib}, we have 
$\TC_1(\WW^i(X))\subseteq \RR^i(X)=\{0\}$, for 
all $i\le k$.  Since $1\in \WW^i(X)$, equality must 
hold, i.e., condition \eqref{v2} is satisfied. 
\end{proof}

Examples of spaces satisfying the hypothesis of the above 
lemma include the tori $T^n$; these spaces are, in fact, straight.

Evidently, all spaces $X$ with $b_1(X)= 0$ are straight. 
Next, we deal with the case $b_1(X)=1$. The following 
example shows that not all finite CW-complexes with first 
Betti number $1$ are (locally) straight.

\begin{example}
\label{ex:nonstraight}
Let $f$ be a polynomial in $\Z[t]$ with $f(1)=0$, and let 
$X_f=(S^1\vee S^2)\cup_{\varphi} e^3$ be the corresponding 
minimal CW-complex constructed in Example \ref{ex:s1s2}.  
A calculation with the equivariant 
chain complex \eqref{eq:s1s2ch} shows that 
$\WW^1(X_f)=\{1\}$ and $\WW^2(X_f)=V(f)$; clearly, both 
these are sets finite subsets of $H^1(X,\C^{\times})=\C^{\times}$. 

An analogous calculation with the universal Aomoto cochain 
complex \eqref{eq:s1s2aom} shows that $\RR^1(X_f)=\{0\}$, 
yet $\RR^2(X_f)=\{0\}$ only if $f'(1)\ne 0$, but 
$\RR^2(X_f)=\C$, otherwise.
Therefore, $X_f$ 
is always $1$-straight, but 
\begin{equation}
\label{eq:2str}
\text{$X_f$ is locally $2$-straight $\same$ 
$f'(1)\ne 0$.}
\end{equation}
\end{example}

Applying a similar reasoning to cell complexes of the 
form $X_f=(S^1\vee S^k)\cup_{\varphi} e^{k+1}$, with 
attaching maps corresponding to polynomials 
$f\in \Z[t]$ with $f(1)=f'(1)=0$, we obtain the 
following result.

\begin{prop}
\label{prop:sns}
For each $k\ge 2$, there is a minimal CW-complex  
which has the integral homology of $S^1\times S^k$ 
and which is $(k-1)$-straight, but not locally $k$-straight.
\end{prop}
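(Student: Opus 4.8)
The plan is to construct, for each $k\ge 2$, a minimal CW-complex $X_f=(S^1\vee S^k)\cup_{\varphi} e^{k+1}$ of the kind described just before the statement, and to verify the three straightness conditions by direct computation with the equivariant chain complex, exactly as in Example \ref{ex:s1s2} and Example \ref{ex:nonstraight}. The key point is the choice of polynomial: I would take $f\in \Z[t]$ satisfying $f(1)=f'(1)=0$ but $f''(1)\ne 0$, the simplest candidate being $f(t)=(t-1)^2$. The condition $f(1)=0$ guarantees that $X_f$ is minimal, hence by the minimality hypothesis its integral homology agrees with that of $S^1\times S^k$. Thus the homological claim is immediate from the construction.

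Next I would set up the equivariant chain complex $C_\bullet(X_f^{\ab},\C)$ over $\Lambda=\C[t^{\pm 1}]$, in analogy with \eqref{eq:s1s2ch}: it has the form $\Lambda\xrightarrow{f(t)}\Lambda\xrightarrow{0}\cdots\xrightarrow{0}\Lambda\xrightarrow{t-1}\Lambda$, where the single nonzero higher differential is multiplication by $f(t)$ between degrees $k+1$ and $k$. Computing the characteristic varieties $\WW^i(X_f)\subset H^1(X_f,\C^{\times})=\C^{\times}$ from this complex, the degree-$1$ variety is supported only at $t=1$, so $\WW^i(X_f)=\{1\}$ for $i\le k-1$, whereas $\WW^k(X_f)=V(f)=\{1\}$ (since $f(t)=(t-1)^2$ vanishes only at $t=1$). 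In particular $\WW^i(X_f)$ is finite in every degree through $k$, so conditions \eqref{v1} and \eqref{v3} hold trivially in all those degrees.

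The resonance varieties are where the two ranges diverge, and I would compute them from the universal Aomoto complex \eqref{eq:s1s2aom}, whose relevant higher differential is multiplication by $f'(1)\,x=0$. For $i\le k-1$ one finds $\RR^i(X_f)=\{0\}$, and combined with the finiteness of $\WW^i(X_f)$, Lemma \ref{lem:straight crit} yields $(k-1)$-straightness. In degree $k$, however, the vanishing of the linearized differential (because $f'(1)=0$) forces $\RR^k(X_f)=\C=H^1(X_f,\C)$, while $\TC_1(\WW^k(X_f))=\TC_1(\{1\})=\{0\}$; thus condition \eqref{v2} fails in degree $k$, so $X_f$ is not locally $k$-straight. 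The main obstacle, and the step requiring genuine care, is the degree-$k$ resonance computation: I must confirm that $H^k$ of the Aomoto complex $(A,\cdot a)$ jumps for every $a\in A^1$, i.e.\ that the relevant cochain differential is identically zero once $f'(1)=0$, which is precisely the content of \eqref{eq:2str} generalized from $k=1$ to arbitrary $k$. This is exactly the phenomenon isolated in Example \ref{ex:nonstraight}, so the work is to check that the equivalence \eqref{eq:2str} persists verbatim in higher degree under the stronger hypothesis $f'(1)=0$.
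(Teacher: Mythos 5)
Your proposal is correct and follows essentially the paper's own route: the paper proves this proposition by applying the reasoning of Example \ref{ex:nonstraight} to the complexes $X_f=(S^1\vee S^k)\cup_{\varphi} e^{k+1}$ with $f(1)=f'(1)=0$, which is precisely your construction (your $f(t)=(t-1)^2$ being one admissible choice). The verification you outline---$\WW^i(X_f)$ finite for all $i\le k$, $\RR^i(X_f)=\{0\}$ for $i\le k-1$ combined with Lemma \ref{lem:straight crit} to get $(k-1)$-straightness, and $\RR^k(X_f)=\C$ versus $\TC_1(\WW^k(X_f))=\{0\}$ to defeat condition \eqref{v2} in degree $k$---is exactly the computation the paper intends.
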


Nevertheless, we have the following positive result, 
guaranteeing straightness in a certain range for 
spaces with first Betti number $1$.

\begin{prop}
\label{prop:b1 straight}
Let $X$ be a CW-complex with finite 
$k$-skeleton.  Assume $b_1(X)=1$.  Then,
\begin{enumerate}
\item \label{b1-1} $X$ is $1$-straight.
\item \label{b1-2} If, moreover, $b_i(X)=0$ for $1<i\le k$, 
then $X$ is $k$-straight. 
\end{enumerate}
\end{prop}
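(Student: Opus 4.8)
The plan is to reduce both parts to Lemma \ref{lem:straight crit}, exploiting the fact that when $n=b_1(X)=1$ the identity component $\wG^0 = H^1(X,\C^{\times})^0$ is the one-dimensional torus $\C^{\times}$. The crucial geometric input is a dimension dichotomy: any Zariski-closed subset of the irreducible curve $\C^{\times}$ is either all of $\C^{\times}$ or a finite set of points. So the whole problem reduces to deciding, in each degree, which alternative occurs.

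First I would record the resonance side. By Proposition \ref{prop:low betti}, $\RR^1(X)=\{0\}$, which is all that is needed for part \eqref{b1-1}. For part \eqref{b1-2}, I would observe that the hypothesis $b_j(X)=0$ for $1<j\le k$ forces $A^j=H^j(X,\C)=0$ in that range; hence the Aomoto complex $(A,\cdot a)$ has a zero term in each such degree, so $H^j(A,\cdot a)=0$ and therefore $\RR^j_1(X)=\emptyset$ for $2\le j\le k$. Combining this with $\RR^0(X)=\RR^1(X)=\{0\}$ yields $\RR^k(X)=\{0\}$.

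Next I would handle the characteristic side using Libgober's inclusion \eqref{eq:lib}. Suppose $\WW^k(X)$ were not finite; by the dichotomy above it would then equal all of $\C^{\times}$, whence $\TC_1(\WW^k(X))=\C$. But \eqref{eq:lib} gives $\TC_1(\WW^k(X))\subseteq \RR^k(X)$, and the computation above shows $\RR^k(X)=\{0\}$ (respectively $\RR^1(X)=\{0\}$ for part \eqref{b1-1}). This contradiction forces $\WW^k(X)$ to be finite. This step, the ruling-out of $\WW^k(X)=\C^{\times}$, is the only real subtlety, and it is precisely where Libgober's theorem and the resonance computation combine.

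With $\WW^k(X)$ finite and $\RR^k(X)=\{0\}$ established, Lemma \ref{lem:straight crit} immediately yields that $X$ is $k$-straight, proving part \eqref{b1-2}. Specializing to $k=1$, where the assumption on higher Betti numbers is vacuous and only $\RR^1(X)=\{0\}$ from Proposition \ref{prop:low betti} is invoked, proves part \eqref{b1-1}. Everything apart from the dichotomy-plus-Libgober argument is routine bookkeeping.
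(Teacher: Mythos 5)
Your proof is correct and follows essentially the same route as the paper's: deduce $\RR^k(X)=\{0\}$ from Proposition \ref{prop:low betti} (and the vanishing of higher Betti numbers), use Libgober's inclusion \eqref{eq:lib} together with the fact that a proper Zariski-closed subset of $\C^{\times}$ is finite to conclude $\WW^k(X)$ is finite, and then invoke Lemma \ref{lem:straight crit}. The only difference is presentational: you phrase the finiteness step as a contradiction, whereas the paper argues directly that $\TC_1(\WW^1(X))=\{0\}$ forces $\WW^1(X)$ to be a proper, hence finite, subvariety.
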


\begin{proof}
As noted in Proposition \ref{prop:low betti}, the fact 
that $b_1(X)=1$ implies $\RR^1(X)=\{0\}$. 
Identify $H^1(X,\C^{\times})=\C^{\times}$. 
By formula \eqref{eq:lib}, we have 
$\TC_1(\WW^1(X))=\{0\}$.  Hence, 
$\WW^1(X)$ is a proper subvariety of $\C^{\times}$; 
consequently, it is a finite set.  Part \eqref{b1-1} now 
follows from Lemma \ref{lem:straight crit}.

For Part \eqref{b1-2}, note that $\RR^i(X)=\{0\}$, for all $i\le k$. 
As above, we conclude that $\WW^k(X)$ is finite, 
and hence $X$ is $k$-straight.
\end{proof}

\begin{corollary}
\label{cor:knots}
Let $K$ be a smoothly embedded $d$-sphere in $S^{d+2}$,  
and let $X=S^{d+2}\setminus K$ be its complement.  Then 
\begin{enumerate}
\item \label{knot1} $\RR^k(X)=\{0\}$ and $\WW^k(X)$ is finite, 
for all $k\ge 1$.
\item \label{knot2} $X$ is straight. 
\end{enumerate}
\end{corollary}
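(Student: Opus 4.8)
The plan is to reduce the whole statement to a single homology computation, after which both parts follow formally from results already established. First I would record the homotopy-theoretic input: since $K\cong S^d$ is smoothly, hence locally flatly, embedded, it has a tubular neighborhood, and deleting its interior exhibits $X$ as a deformation retract of a compact manifold-with-boundary; in particular $X$ has the homotopy type of a finite CW-complex, so all the jump-loci machinery applies. Alexander duality then gives $\tilde H_i(X;\Z)\cong \tilde H^{d+1-i}(S^d;\Z)$, which is $\Z$ when $i=1$ and $0$ otherwise. Thus $H_*(X;\Z)$ is torsion-free, $b_1(X)=1$, and $b_i(X)=0$ for all $i\ge 2$; equivalently, the cohomology ring $A=H^*(X,\C)$ is concentrated in degrees $0$ and $1$, with $A^0=A^1=\C$.

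For part \eqref{knot1} I would first read off the resonance varieties directly from this ring. For any $a\in A^1\setminus\{0\}$ the Aomoto complex $(A,\cdot a)$ collapses to $A^0\xrightarrow{\cdot a}A^1$ with all higher terms zero; this map is an isomorphism, so $H^j(A,\cdot a)=0$ for every $j$. Hence no nonzero $a$ lies in any $\RR^j_1(X)$, and since the $j=0$ term always contributes $\{0\}$ we get $\RR^k(X)=\bigcup_{j=0}^k\RR^j_1(X)=\{0\}$ for every $k\ge 1$. (Alternatively, $\RR^1(X)=\{0\}$ is immediate from Proposition \ref{prop:low betti}, and there is nothing in higher degrees.) To obtain finiteness of $\WW^k(X)$, identify $H^1(X,\C^{\times})^0=\C^{\times}$ and apply \eqref{eq:lib}: $\TC_1(\WW^k(X))\subseteq \RR^k(X)=\{0\}$. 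Since $1\in\WW^k(X)$ and $\WW^k(X)$ is Zariski closed in the one-dimensional torus $\C^{\times}$ with trivial tangent cone at $1$, it cannot be all of $\C^{\times}$; being a proper closed subvariety of a curve, it is finite.

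Part \eqref{knot2} is then immediate. With $\WW^k(X)$ finite and $\RR^k(X)=\{0\}$ for every $k\ge 1$, Lemma \ref{lem:straight crit} shows $X$ is $k$-straight for all $k$, that is, $X$ is straight. One could equally well bypass part \eqref{knot1} and appeal directly to Proposition \ref{prop:b1 straight}\eqref{b1-2}, whose hypotheses---namely $b_1(X)=1$ and $b_i(X)=0$ for $i>1$---we have just verified.

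The only step that asks for genuine care---the main, if modest, obstacle---is the finite-type claim for the open complement $X$: the resonance and characteristic varieties are defined for complexes with finite skeleta, so I must first justify that $X$ has the homotopy type of such a complex. This is exactly where local flatness of the embedding and the existence of a tubular neighborhood enter. Everything downstream of the homology computation is purely formal, powered by the vanishing of $H^i(X,\C)$ above degree $1$.
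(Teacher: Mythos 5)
Your proof is correct and follows essentially the same route as the paper: the paper simply notes that the knot complement has the homotopy type of a finite $(d+1)$-dimensional CW-complex with the integral homology of $S^1$, and then invokes Proposition \ref{prop:b1 straight} and its proof, which is exactly your chain of reasoning ($b_1=1$ and $b_i=0$ for $i\ge 2$ give $\RR^k(X)=\{0\}$, inclusion \eqref{eq:lib} forces $\WW^k(X)$ to be a proper, hence finite, subvariety of $\C^{\times}$, and Lemma \ref{lem:straight crit} yields straightness). Your added details---the tubular neighborhood argument for finiteness of the CW structure and the Alexander duality computation---are exactly what the paper leaves implicit in its opening sentence.
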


\begin{proof}
The knot complement has the homotopy type of a 
$(d+1)$-dimensional CW-complex, with the integral 
homology of $S^1$.  The desired conclusions 
follow from Proposition \ref{prop:b1 straight} and its proof.
\end{proof} 

For a knot $K$ in $S^3$, the variety $\WW^1(X)\subset \C^{\times}$ 
consists of $1$, together with all the roots of the Alexander 
polynomial, $\Delta_K$ (these roots are always different from $1$).

\subsection{Products and wedges}
\label{subsec:straight pw}
We now look at how the straightness properties 
behave with respect to (finite) products and wedges 
of spaces.

\begin{prop}
\label{prop:straight wedge}
Let $X_1$ and $X_2$ be CW-complexes with finite 
$k$-skeleta and positive first Betti numbers. Then 
the space $X=X_1\vee X_2$ is $k$-straight.
\end{prop}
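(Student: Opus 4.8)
The plan is to reduce the entire statement to the wedge formula for characteristic varieties, so that the straightness conditions become essentially automatic. First I would invoke Proposition~\ref{prop:cv pw}\eqref{cp2}: since both $X_1$ and $X_2$ have positive first Betti number, that result gives $\WW^i(X) = H^1(X,\C^{\times})^0$ for every $i$ with $1\le i\le k$, where $X = X_1\vee X_2$. Identifying $H^1(X,\C^{\times})^0$ with the torus $(\C^{\times})^n$, $n = b_1(X_1)+b_1(X_2)$, this says that each relevant characteristic variety is the full identity component of the character group.

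With this in hand I would verify conditions \eqref{v1}--\eqref{v3} degree by degree for $i\le k$, following the pattern of the proof of Lemma~\ref{lem:straight}. Condition \eqref{v1} holds because the ambient torus $(\C^{\times})^n$ is itself an algebraic subtorus passing through the origin, so its unique component is a subtorus. Condition \eqref{v3} is vacuous: the only component of $\WW^i(X)$ passes through $1$, so there are no off-origin components to constrain. The one step needing a short computation is \eqref{v2}: since the whole torus is a subtorus, $\TC_1(\WW^i(X)) = \TC_1((\C^{\times})^n) = \C^n$, and then Libgober's inclusion \eqref{eq:lib} sandwiches $\C^n = \TC_1(\WW^i(X)) \subseteq \RR^i(X) \subseteq H^1(X,\C) = \C^n$, forcing $\RR^i(X) = \C^n = \TC_1(\WW^i(X))$. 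This establishes $k$-straightness.

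I do not expect a genuine obstacle, since all the substantive content is already packaged into Proposition~\ref{prop:cv pw}. The only point meriting care is that I should not cite Lemma~\ref{lem:straight} verbatim: that lemma assumes $X$ is of finite type, whereas here only the $k$-skeleton is finite, so I would re-run its argument in the bounded range $i\le k$ rather than appeal to it directly. The degree-$0$ case is trivially fine, as $\WW^0(X)=\{1\}$ and $\RR^0(X)=\{0\}$ satisfy all three conditions.
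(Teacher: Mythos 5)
Your proposal is correct and follows essentially the same route as the paper, which simply cites Proposition~\ref{prop:cv pw}\eqref{cp2} together with Lemma~\ref{lem:straight}. Your extra care in re-running the lemma's argument in the range $i\le k$ (rather than citing it verbatim, since it is stated for finite-type complexes) is a reasonable refinement, but the substance --- the full-torus identification, the vacuity of \eqref{v1} and \eqref{v3}, and the Libgober sandwich for \eqref{v2} --- is exactly the paper's argument.
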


\begin{proof}
Follows from Proposition \ref{prop:cv pw}\eqref{cp2}  
and Lemma \ref{lem:straight}.
\end{proof}

In particular, a wedge of knot complements is straight. 

\begin{prop}
\label{prop:loc straight prod}
Let $X_1$ and $X_2$ be two CW-complexes. 
\begin{enumerate}
\item \label{ls1}
If $X_1$ and $X_2$ are locally $k$-straight, then so is 
$X_1\times X_2$.
\item \label{ls2} 
If $X_1$ and $X_2$ are $1$-straight, then so is 
$X_1\times X_2$.
\end{enumerate}
\end{prop}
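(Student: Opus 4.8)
The plan is to reduce both parts to the product formulas for the jump loci---Proposition \ref{prop:cv pw}\eqref{cp1} for the characteristic varieties and Corollary \ref{cor:rri prod} for the resonance varieties---together with the behaviour of tangent cones under direct products. The one auxiliary fact I would isolate first is that, for Zariski closed subsets $W_j\subseteq(\C^{\times})^{n_j}$ each containing the identity, one has $\TC_1(W_1\times W_2)=\TC_1(W_1)\times\TC_1(W_2)$ (and likewise $\tau_1(W_1\times W_2)=\tau_1(W_1)\times\tau_1(W_2)$, which is immediate from the definition of $\tau_1$). The tangent-cone version is the key technical input, and I expect it to be the main obstacle: because the tangent cone depends only on the analytic germ at $1$ and is $\Spec$ of the associated graded of the local ring, and the associated graded of a product of local rings is the tensor product of the associated gradeds, the identity follows. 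The point to be careful about is that $\TC_1$ does \emph{not} commute with intersections in general, so one must make sure it is the product structure (disjoint sets of variables), and not an intersection, that is being exploited.

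For part \eqref{ls1}, I would verify conditions \eqref{v1} and \eqref{v2} for $X=X_1\times X_2$ at each degree $i\le k$. Writing $\WW^p(X_1)=\bigcup_\alpha C_\alpha$ and $\WW^q(X_2)=\bigcup_\beta D_\beta$ as unions of irreducible components, Proposition \ref{prop:cv pw}\eqref{cp1} exhibits $\WW^i(X)$ as the union of the irreducible sets $C_\alpha\times D_\beta$ over all $p+q=i$; hence every irreducible component of $\WW^i(X)$ is of this form. Such a component passes through $1=(1,1)$ exactly when both $C_\alpha$ and $D_\beta$ do, and then \eqref{v1} for the factors makes each of them a subtorus, so the product is a subtorus---this yields \eqref{v1}. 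For \eqref{v2} I would apply $\TC_1$ to the union, use that it commutes with finite unions together with the product formula above, invoke \eqref{v2} for the factors to replace each $\TC_1(\WW^p(X_j))$ by $\RR^p(X_j)$, and finally recognize $\bigcup_{p+q=i}\RR^p(X_1)\times\RR^q(X_2)$ as $\RR^i(X)$ via Corollary \ref{cor:rri prod}.

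For part \eqref{ls2}, since $1$-straightness implies local $1$-straightness, part \eqref{ls1} already supplies \eqref{v1} and \eqref{v2} at $i\le 1$, so only \eqref{v3} at $i=1$ remains. Here the product formula degenerates usefully: because $\WW^0(X_j)=\{1\}$, we get $\WW^1(X)=\{1\}\times\WW^1(X_2)\,\cup\,\WW^1(X_1)\times\{1\}$, so every component is $\{1\}\times D$ or $C\times\{1\}$. Such a component misses the origin precisely when the factor $D$ (respectively $C$) is an off-origin component of $\WW^1(X_2)$ (respectively $\WW^1(X_1)$), which by \eqref{v3} for that factor is $0$-dimensional; hence the component itself is $0$-dimensional, establishing \eqref{v3}.

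Finally, I would add a remark explaining why part \eqref{ls2} is confined to $k=1$, since this is exactly where the method breaks down. For $i\ge 2$ the product formula produces ``mixed'' components $C\times D$ in which $C$ is a positive-dimensional subtorus through the origin of some $\WW^p(X_1)$ and $D$ is a translated, $0$-dimensional component of some $\WW^q(X_2)$. Such a $C\times D$ is positive-dimensional yet avoids the origin, so condition \eqref{v3} fails for the product even though it holds for both factors. The collapse $\WW^0=\{1\}$ is precisely what prevents these mixed components from arising when $i=1$, which is why $k$-straightness propagates through products only in that degree, whereas local $k$-straightness---which imposes no constraint on off-origin components---propagates in all degrees.
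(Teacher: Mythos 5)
Your proof is correct and takes essentially the same route as the paper: the paper's entire proof is a one-line appeal to the product formulas for the characteristic and resonance varieties (Proposition \ref{prop:cv pw}\eqref{cp1} and Corollary \ref{cor:rri prod}), which is precisely the skeleton you flesh out. The auxiliary identity $\TC_1(W_1\times W_2)=\TC_1(W_1)\times\TC_1(W_2)$, the component-by-component verification of \eqref{v1}--\eqref{v3}, and your closing remark on why part \eqref{ls2} fails for $k\ge 2$ (which matches the paper's Example \ref{ex:nonstraight product}) are exactly the details the paper leaves implicit.
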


\begin{proof}
Both assertions follow from the product formulas for resonance 
and characteristic varieties (Proposition \ref{prop:cv pw}\eqref{cp1} and 
Corollary \ref{cor:rri prod}, respectively).
\end{proof}

\begin{prop}
\label{prop:straight prod}
Let  $X_1$ and $X_2$ be CW-complexes with finite $k$-skeleton. 
Suppose $\RR^k(X_j)=\{0\}$ and $\WW^k(X_j)$ is finite, 
for $j=1,2$.  Then $X_1\times X_2$ is $k$-straight.
\end{prop}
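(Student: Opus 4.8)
The plan is to reduce the claim to Lemma~\ref{lem:straight crit}, applied to the product $X=X_1\times X_2$. That lemma produces $k$-straightness from just two inputs: that $\RR^k(X)=\{0\}$ and that $\WW^k(X)$ is finite. So the whole proof amounts to verifying these two conditions for the product, using the respective product formulas for resonance and characteristic varieties.

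First I would dispose of the resonance side. The hypothesis $\RR^k(X_j)=\{0\}$, combined with the monotonicity of the filtration \eqref{eq:filt upres}, forces $\RR^p(X_j)=\{0\}$ for every $p\le k$ and $j=1,2$. Feeding this into the product formula of Corollary~\ref{cor:rri prod} gives
\[
\RR^k(X_1\times X_2)=\bigcup_{p+q=k}\RR^p(X_1)\times\RR^q(X_2)=\{0\},
\]
since each summand collapses to the origin.

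Next I would treat the characteristic side in the same spirit. By the ascending filtration \eqref{eq:filt asc}, each $\WW^p(X_j)$ with $p\le k$ sits inside the finite set $\WW^k(X_j)$, and is therefore itself finite. The product formula in Proposition~\ref{prop:cv pw}\eqref{cp1} then presents $\WW^k(X_1\times X_2)$ as a finite union of Cartesian products of finite sets, hence as a finite set. With both hypotheses of Lemma~\ref{lem:straight crit} now in hand, that lemma delivers the $k$-straightness of $X_1\times X_2$.

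There is no serious obstacle here: the argument is a bookkeeping exercise combining two cited product formulas with the finiteness criterion of Lemma~\ref{lem:straight crit}. The only points demanding a moment's care are that all the relevant varieties be understood inside the identity component $H^1(X,\C^{\times})^0$ of the character group---so that the $\WW$-product formula applies verbatim---and that the filtrations be used in the correct (increasing) direction when propagating the hypotheses from degree $k$ down to all lower degrees.
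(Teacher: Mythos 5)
Your proof is correct and follows essentially the same route as the paper's own argument: both verify the hypotheses of Lemma~\ref{lem:straight crit} for the product via Corollary~\ref{cor:rri prod} and Proposition~\ref{prop:cv pw}\eqref{cp1}. Your version merely makes explicit the filtration bookkeeping (propagating the degree-$k$ hypotheses down to all degrees $p\le k$) that the paper leaves implicit, and omits the paper's unused preliminary remark that $X_1$ and $X_2$ are themselves $k$-straight.
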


\begin{proof}
From Lemma \ref{lem:straight crit}, we know that both 
$X_1$ and $X_2$ are $k$-straight.  Now, 
Corollary \ref{cor:rri prod} gives that $\RR^k(X_1\times X_2)=\{0\}$, 
while Proposition \ref{prop:cv pw}\eqref{cp1} gives that 
$\WW^k(X_1\times X_2)$ is finite. Hence, again by 
 Lemma \ref{lem:straight crit}, $X_1\times X_2$ is $k$-straight. 
\end{proof}

In particular, a product of knot complements is straight.
In general, though, a product of straight spaces need 
not be straight.  

\begin{example}
\label{ex:nonstraight product}
Let $K$ be a knot in $S^3$, with Alexander polynomial 
$\Delta_K$ not equal to $1$ (for instance, the trefoil knot), and 
let $X$ be its complement. Let $Y=\bigvee^n S^1$ 
be a wedge of $n$ circles, with $n\ge 2$. 
By Corollary \ref{cor:knots} and 
Proposition \ref{prop:straight wedge}, both $X$ and $Y$ 
are straight. 

By Proposition \ref{prop:loc straight prod}\eqref{ls2}, the 
product $X\times Y$ is $1$-straight.  Nevertheless, 
$X\times Y$  is not $2$-straight. Indeed, the variety 
$\WW^2(X\times Y)$ has irreducible components of the 
form $\{\rho\} \times (\C^{\times})^n$, where $\rho$ runs 
through the roots of $\Delta_K$.  As these components 
do not pass through the origin, straightness 
condition \eqref{v3} fails for $X\times Y$ in degree $i=2$. 
\end{example} 

\subsection{Rationality properties}
\label{subsec:straight rat}

Before proceeding, let us give an alternative characterization 
of straightness.  As usual, let $X$ be a connected CW-complex 
with finite $k$-skeleton, and let 
$\exp\colon H^1(X,\C) \to H^1(X,\C^{\times})^0$ 
be the exponential map (or rather, its corestriction 
to its image).

\begin{theorem}
\label{thm:alt straight}
A space $X$ as above is locally $k$-straight if and only 
if the following equalities hold, for all $i\le k$:
\begin{align*}
\tag{$\alpha$}\label{w1}
\WW^{i}(X)&=\Bigg(\bigcup_{L\in \CC_i(X)} \exp(L\otimes \C)\Bigg) 
\cup Z_i\\
\tag{$\beta$}\label{w2}
\RR^{i}(X)&=\bigcup_{L\in \CC_i(X)} L\otimes \C,
\end{align*}
for some algebraic subsets $Z_i\subset H^1(X,\C^{\times})^0$ 
not containing the origin.  

The space $X$ is $k$-straight if and only if, in addition, the sets 
$Z_i$ are finite.
\end{theorem}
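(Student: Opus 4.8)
The plan is to prove Theorem~\ref{thm:alt straight} by unpacking the definitions of the three straightness conditions \eqref{v1}--\eqref{v3} and translating them into the structural statements \eqref{w1} and \eqref{w2}. The key observation is that the characteristic arrangement $\CC_i(X)$ is, by Definition~\ref{def:df subarr}, precisely the collection of rational linear subspaces whose complexified union is $\tau_1(\WW^i(X))$. So I would organize everything around the two tangent cones $\tau_1$ and $\TC_1$ and their interaction with the local-to-origin structure of $\WW^i(X)$.

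First I would establish the forward direction (local $k$-straightness $\Rightarrow$ \eqref{w1}, \eqref{w2}). Write $\WW^i(X)$ as the union of its irreducible components, splitting them into those $C$ passing through the origin $1$ and those not passing through $1$; call $Z_i$ the union of the latter, so $Z_i$ is an algebraic set not containing $1$, as required. By condition \eqref{v1}, every component through $1$ is an algebraic subtorus $T$. For such a subtorus we have $\tau_1(T)=\TC_1(T)=T_1(T)$, a rational linear subspace $L$ with $T=\exp(L\otimes\C)$ (this is the remark following Lemma~\ref{lem:ttcone}). Hence the components through the origin are exactly the $\exp(L\otimes\C)$ for $L\in\CC_i(X)$, which gives \eqref{w1}. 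For \eqref{w2}, I would combine condition \eqref{v2}, namely $\TC_1(\WW^i(X))=\RR^i(X)$, with the fact that on the subtorus components $\TC_1$ agrees with $\tau_1$, while the components in $Z_i$ do not contribute to the tangent cone at $1$ (they are bounded away from $1$); since $\TC_1$ commutes with finite unions, $\TC_1(\WW^i(X))=\bigcup_{L\in\CC_i(X)} L\otimes\C=\tau_1(\WW^i(X))$, which is $\RR^i(X)$ by \eqref{v2} and equals the right-hand side of \eqref{w2} by Definition~\ref{def:df subarr}.

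For the converse, assume \eqref{w1} and \eqref{w2} hold. The decomposition \eqref{w1} exhibits every positive-dimensional component through $1$ as an $\exp(L\otimes\C)$, i.e.\ an algebraic subtorus, so \eqref{v1} holds. Taking $\TC_1$ of \eqref{w1} and using that $Z_i$ does not meet a neighborhood of $1$ (so contributes nothing) together with $\TC_1(\exp(L\otimes\C))=L\otimes\C$, I get $\TC_1(\WW^i(X))=\bigcup_{L} L\otimes\C$, which by \eqref{w2} equals $\RR^i(X)$; this is \eqref{v2}. Finally, for the $k$-straight refinement, condition \eqref{v3} says the components not through $1$ are $0$-dimensional, which by the decomposition \eqref{w1} is exactly the statement that $Z_i$ is finite; so the addendum about $Z_i$ being finite is equivalent to \eqref{v3}, completing the equivalence.

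The main obstacle I anticipate is the bookkeeping at the origin: one must argue carefully that the components contained in $Z_i$ (those avoiding $1$) genuinely contribute nothing to $\TC_1(\WW^i(X))$, and dually that \eqref{w1} cannot hide an extra subtorus-through-origin outside the listed $\exp(L\otimes\C)$. The clean way to handle this is to note that $\TC_1$ and $\tau_1$ depend only on the analytic germ of $\WW^i(X)$ at $1$ (as recorded after Definitions~\ref{def:tcone} and~\ref{def:exp tcone}), so any component disjoint from a neighborhood of $1$ is invisible to both tangent cones; the only subtlety is ensuring the subspaces indexed by $\CC_i(X)$ match up on the nose with the tangent cone of the germ, which is guaranteed precisely because $\CC_i(X)$ was \emph{defined} via $\tau_1(\WW^i(X))$ in Definition~\ref{def:df subarr}.
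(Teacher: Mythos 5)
Your proposal is correct and follows essentially the same route as the paper's proof: decompose $\WW^i(X)$ into subtori through the origin plus the algebraic set $Z_i$ of components missing $1$, compute $\tau_1$ of this decomposition, and identify the resulting tangent subspaces with $\CC_i(X)$ (the paper phrases your "match up on the nose" step explicitly as uniqueness of decomposition into irreducible components), then obtain \eqref{w2} from condition \eqref{v2} via $\tau_1(\WW^i(X))=\TC_1(\WW^i(X))$. The converse and the equivalence of \eqref{v3} with finiteness of the $Z_i$ are handled exactly as in the paper.
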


\begin{proof}
Suppose $X$ is locally $k$-straight, and fix an index $i\le k$.  
By hypothesis \eqref{v1} from Definition \ref{def:straight}, 
the variety $\WW^i(X)$ admits a decomposition into 
irreducible components of the form  
\begin{equation}
\label{eq:wstraight}
\WW^i(X) = \bigcup_{\alpha\in \Lambda} T_{\alpha}\cup 
Z_i,
\end{equation} 
for some algebraic subtori $T_{\alpha}=\exp(P_{\alpha}\otimes \C)$ 
and some algebraic sets $Z_i$ with $1\notin Z_i$.  Hence,
\begin{equation}
\label{eq:tauwstraight}
\tau_1(\WW^{i}(X))=\bigcup_{\alpha \in \Lambda} P_{\alpha}\otimes \C.
\end{equation}

On the other hand, by the definition of characteristic 
subspace arrangements, 
we also have  
$\tau_1(\WW^{i}(X))=\bigcup_{L\in \CC_i(X)} L\otimes \C$. 
By uniqueness of decomposition into irreducible components,  
the arrangement $\{P_{\alpha}\}_{\alpha\in \Lambda}$ must 
coincide with $\CC_i(X)$. Consequently, equation \eqref{eq:wstraight} 
yields \eqref{w1}. 

Equation \eqref{eq:wstraight} also implies 
$\tau_1(\WW^{i}(X))=\TC_1(\WW^{i}(X))$.  
Hypothesis \eqref{v2} then gives  
$\tau_1(\WW^{i}(X))=\RR^{i}(X)$, 
which is precisely condition \eqref{w2}.

Conversely, condition \eqref{w1} implies \eqref{v1} and 
conditions  \eqref{w1} and \eqref{w2} together imply 
\eqref{v2}.  

Finally, hypothesis \eqref{v3} is satisfied if and only 
if the sets $Z_1, \dots ,Z_k$ are all finite.
\end{proof}


\begin{corollary}
\label{cor:rat res}
Let $X$ be a locally $k$-straight space.  Then, for all $i\le k$,
\begin{enumerate}
\item \label{rs1}
$\tau_1(\WW^i(X))=\TC_1(\WW^i(X))=\RR^i(X)$.
\item \label{rs2}
$\RR^i(X,\Q)=
\bigcup_{L\in \CC_i(X)} L$. 
\end{enumerate}
In particular, the resonance varieties $\RR^i(X)$ 
are unions of rationally defined subspaces. 
\end{corollary}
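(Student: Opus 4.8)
The plan is to read everything off Theorem \ref{thm:alt straight} together with Definition \ref{def:df subarr} and the rationality statement of Theorem \ref{thm:tau1}, so that the corollary becomes a formal consequence of facts already in hand. For part \eqref{rs1}, I would fix $i\le k$ and invoke straightness condition \eqref{v1} to write $\WW^i(X)$ as in \eqref{eq:wstraight}, namely as a finite union $\bigcup_\alpha T_\alpha \cup Z_i$, where each $T_\alpha=\exp(P_\alpha\otimes\C)$ is an algebraic subtorus through $1$ and $Z_i$ is an algebraic set with $1\notin Z_i$. Since both $\tau_1$ and $\TC_1$ depend only on the germ at $1$, commute with finite unions, and are empty on any set not containing $1$, I can compute each cone component by component: on the $Z_i$ part both vanish, and on each subtorus $T_\alpha$ the two cones agree with the tangent space $T_1(T_\alpha)=P_\alpha\otimes\C$ (this is exactly the remark following Lemma \ref{lem:ttcone}). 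Hence $\tau_1(\WW^i(X))=\TC_1(\WW^i(X))=\bigcup_\alpha P_\alpha\otimes\C$, and the remaining equality $\TC_1(\WW^i(X))=\RR^i(X)$ is precisely hypothesis \eqref{v2}. This is the same computation already carried out inside the proof of Theorem \ref{thm:alt straight}, so in practice I would just quote those two lines.

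For part \eqref{rs2}, I would recall that the $i$-th characteristic arrangement is defined by $\tau_1(\WW^i(X))=\bigcup_{L\in\CC_i(X)}L\otimes\C$. Combining this with part \eqref{rs1} gives $\RR^i(X)=\bigcup_{L\in\CC_i(X)}L\otimes\C$. By Theorem \ref{thm:tau1} each $L$ is a rationally defined linear subspace of $H^1(X,\Q)$, so the right-hand side exhibits $\RR^i(X)$ as a union of complexifications of rational subspaces; this is already the final ``in particular'' assertion. To descend to rational points I would intersect both sides with $H^1(X,\Q)$: since $\RR^i(X,\Q)=\RR^i(X)\cap H^1(X,\Q)$ by the field-extension property of resonance, and since the rational points of $L\otimes\C$ are exactly $L$ when $L$ is rationally defined, I obtain $\RR^i(X,\Q)=\bigcup_{L\in\CC_i(X)}L$.

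I expect the only step with any content to be the identity $\tau_1(\WW^i(X))=\TC_1(\WW^i(X))$, and the whole role of local straightness is to force it. In general these two tangent cones differ, as Example \ref{ex:chain link} shows, so one cannot dispense with condition \eqref{v1}: it is precisely the subtorus structure of the components through $1$ that collapses the gap, after which the equality on each subtorus is the elementary observation that the ordinary cone, the exponential cone, and the tangent space all coincide there. Everything else---passing from $\TC_1$ to $\RR^i(X)$ via \eqref{v2}, identifying the arrangement via Definition \ref{def:df subarr}, and extracting rational points via Theorem \ref{thm:tau1}---is bookkeeping that introduces no new obstacle.
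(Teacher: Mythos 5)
Your proposal is correct and follows essentially the same route as the paper: the paper presents this corollary as an immediate consequence of Theorem \ref{thm:alt straight}, whose proof contains precisely the computation you describe (decompose $\WW^i(X)$ via condition \eqref{v1} into subtori through $1$ plus a set $Z_i$ avoiding $1$, note that both tangent cones vanish on $Z_i$ and coincide with $P_\alpha\otimes\C$ on each subtorus, then apply condition \eqref{v2} and Definition \ref{def:df subarr}). Your handling of the rational points via the field-extension property $\RR^i(X,\Q)=\RR^i(X)\cap H^1(X,\Q)$ and the rationality of the subspaces $L$ from Theorem \ref{thm:tau1} is exactly the intended bookkeeping.
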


The next example (adapted from \cite{DPS-duke}) 
illustrates how this rationality property may be used to 
detect non-straightness. 

\begin{example}
\label{ex:sqr2}
Consider the group $G$ with generators $x_1, x_2, x_3, x_4$ 
and relators $r_1=[x_1, x_2]$, $r_2=[x_1, x_4] [x_2^{-2}, x_3]$, 
$r_3= [x_1^{-1}, x_3] [x_2, x_4]$.  
Direct computation shows that
\[
\RR^1(G)=\{z \in \C^4 \mid z_1^2-2z_2^2=0\}. 
\] 
Evidently, this variety splits into two linear subspaces defined 
over $\R$, but not over $\Q$.  Thus, $G$ is not (locally) 
$1$-straight. 
\end{example}

\section{The Dwyer--Fried invariants}
\label{sec:df res}

In this section, we recall the definition of the Dwyer--Fried 
sets, and the way these sets relate to the characteristic varieties 
of a space.

\subsection{Betti numbers of free abelian covers}
\label{subsec:df}

As before, let $X$ be a connected CW-complex 
with finite $k$-skeleton, and let $G=\pi_1(X,x_0)$.  
Denote by $n=b_1(X)$ the first Betti number of 
$X$.  (We may as well assume $n>0$, otherwise the 
whole theory is empty of content.) Fix an integer $r$ 
between $1$ and $n$, and consider the regular covers 
of $X$, with group of deck-transformations $\Z^r$.  

Each such cover, $X^{\nu}\to X$, is determined 
by an epimorphism $\nu \colon G \surj \Z^r$.  The 
induced homomorphism in cohomology, 
$\nu^*\colon H^1(\Z^r,\Q) \inj H^1(G,\Q)$, defines  
an $r$-dimensional subspace, $P_{\nu}=\im(\nu^*)$,  
in the rational vector space $H^1(G,\Q)=\Q^n$.  
Conversely, each $r$-dimensional subspace 
$P\subset \Q^n$ can be written as $P=P_{\nu}$, 
for some epimorphism $\nu \colon G \surj \Z^r$, 
and thus defines a regular $\Z^r$-cover of $X$. 

To recap, the regular $\Z^r$-covers of $X$ are 
parametrized by the Grassmannian of $r$-planes in 
$H^1(X,\Q)$, via the correspondence 
\[
\big\{ \text{$\Z^r$-covers $X^{\nu}  \to X$} \big\} \longleftrightarrow 
\big\{ \text{$r$-planes $P_{\nu}:=\im(\nu^*)$ in $H^1(X,\Q)$}\big\}.
\]

Moving about the rational Grassmannian 
and recording how the Betti numbers of the corresponding 
covers vary leads to the following definition.

\begin{definition}
\label{def:df}
The {\em Dwyer--Fried invariants}\/ of $X$ are 
the subsets 
\begin{equation}
\label{eq:grass}
\Omega^i_r(X)=\big\{P_{\nu} \in \Grass_r(H^1(X,\Q)) \bigmid 
\text{$b_{j} (X^{\nu}) <\infty$ for $j\le i$}  \big\}. 
\end{equation}
\end{definition}

Set $n=b_1(X)$.  For a fixed $r$ between $1$ and $n$, 
these sets form a descending filtration of the Grassmannian 
of $r$-planes in $H^1(X,\Q)=\Q^n$, 
\begin{equation}
\label{eq:df filt}
\Grass_r(\Q^n) = \Omega^0_r(X) \supseteq \Omega^1_r(X)  
\supseteq \Omega^2_r(X)  \supseteq \cdots.
\end{equation}
If $r>n$, we adopt the convention that $\Grass_r(\Q^n)=\emptyset$ 
and define $\Omega^i_r(X)=\emptyset$ in this range.

As noted in \cite{Su}, the $\Omega$-sets are homotopy-type 
invariants.  More precisely, if  $f\colon X\to Y$ is a homotopy 
equivalence, the induced isomorphism in cohomology, 
$f^*\colon H^1(Y,\Q) \to H^1(X,\Q)$, defines isomorphisms 
$f^*_r\colon \Grass_r(H^1(Y,\Q)) \to \Grass_r(H^1(X,\Q))$, 
which send each subset $\Omega^i_r(Y)$ bijectively 
onto $\Omega^i_r(X)$.  

\begin{example}
\label{ex:torus}
Let $T^n$ be the $n$-dimensional torus. Since every 
connected cover of $T^n$ is homotopy equivalent to a 
$k$-torus, for some $0\le k\le n$, we conclude that 
$\Omega^i_r(T^n)=\Grass_r(\Q^n)$, for all $i\ge 0$  
and $r\ge 1$.
\end{example}

\subsection{Dwyer--Fried invariants and characteristic varieties}
\label{subsec:df cv}

The next theorem reduces the computation of the $\Omega$-sets 
to a more standard computation in algebraic geometry.  
The theorem was proved by Dwyer and Fried in \cite{DF}, 
using the support loci for the Alexander invariants, 
and was recast in a slightly more general context by 
Papadima and Suciu in \cite{PS-plms}, using the 
characteristic varieties.  We state this result in the  
form most convenient for our purposes, namely, the 
one established in \cite{Su}.

\begin{theorem}[\cite{DF, PS-plms, Su}]
\label{thm:df cv}
For all $i\le k$ and $1\le r\le n$, 
\begin{equation*}
\label{eq:grass cv}
\Omega^i_r(X)=\big\{P\in \Grass_r(\Q^n) \bigmid
\# \big(\!\exp(P \otimes \C) \cap \WW^i(X) 
\big)<\infty \big\}.
\end{equation*}
\end{theorem}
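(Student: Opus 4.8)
The plan is to translate the homological finiteness of the cover into the geometry of $\WW^i(X)$ through the equivariant cellular chain complex of $X^{\nu}$, regarded as a complex of modules over the Laurent polynomial ring $R=\C[\Z^r]=\C[t_1^{\pm 1},\dots,t_r^{\pm 1}]$. Recall that the cover $X^{\nu}\to X$ attached to $P=P_{\nu}=\im(\nu^*)$ is classified by an epimorphism $\nu\colon G\surj \Z^r$, and that the induced monomorphism $\nu^*\colon (\C^{\times})^r=\Hom(\Z^r,\C^{\times})\inj \wG$ has image the algebraic subtorus $T_{\nu}=\exp(P\otimes\C)$, with $\nu^*(\rho)=\rho\circ\nu$ for each $\rho$. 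Since $\nu^*$ is an isomorphism onto $T_{\nu}$, finiteness of $\exp(P\otimes\C)\cap \WW^i(X)=T_{\nu}\cap\WW^i(X)$ is equivalent to finiteness of its preimage $B=\{\rho\mid \rho\circ\nu\in\WW^i(X)\}$. The goal is thus to show that $b_j(X^{\nu})<\infty$ for all $j\le i$ if and only if $B$ is finite.

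First I would make the module-theoretic reduction. Writing $C_{\bullet}^{\nu}=C_{\bullet}(\wX,\C)\otimes_{\C[G]}R$ for the $R$-linear chain complex of $X^{\nu}$, the finite $k$-skeleton hypothesis makes $C_j^{\nu}$ a finitely generated free $R$-module for $j\le k$; as $R$ is Noetherian and each boundary has image inside such a module, the homology $H_j(C_{\bullet}^{\nu})$ is finitely generated for all $j\le k$. Because $b_j(X^{\nu})=\dim_{\C}H_j(C_{\bullet}^{\nu})$ and every maximal ideal of $R$ has residue field $\C$, a finitely generated $R$-module is finite-dimensional over $\C$ precisely when its support is a finite subset of $(\C^{\times})^r$ (finite support plus finite generation forces finite length, hence finite $\C$-dimension; the converse is immediate). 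Consequently,
\[
b_j(X^{\nu})<\infty \ \text{for all}\ j\le i \quad\Longleftrightarrow\quad A:=\bigcup_{j\le i}\supp_R H_j(C_{\bullet}^{\nu}) \ \text{is finite}.
\]

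Next I would pass to fibers and compare $B$ with $A$. For $\rho\in(\C^{\times})^r$ with maximal ideal $\mathfrak{m}_{\rho}\subset R$, base change gives an isomorphism of complexes $C_{\bullet}^{\nu}\otimes_R R/\mathfrak{m}_{\rho}\cong C_{\bullet}(X,\L_{\rho\circ\nu})$, so $H_j(C_{\bullet}^{\nu}\otimes_R\C_{\rho})\cong H_j(X,\L_{\nu^*\rho})$ and hence $B=\{\rho\mid H_j(C_{\bullet}^{\nu}\otimes_R\C_{\rho})\ne 0\ \text{for some}\ j\le i\}$. The universal-coefficients spectral sequence $\Tor^R_p(H_q(C_{\bullet}^{\nu}),\C_{\rho})\Rightarrow H_{p+q}(C_{\bullet}^{\nu}\otimes_R\C_{\rho})$ immediately yields $B\subseteq A$: a nonzero fiber homology in degree $\le i$ forces some $\Tor^R_p(H_q,\C_{\rho})\ne 0$ with $q\le i$, whence $\rho\in\supp H_q\subseteq A$; thus $A$ finite implies $B$ finite. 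For the reverse I would argue contrapositively: if $A$ is infinite, choose the least $j_0\le i$ with $\supp H_{j_0}(C_{\bullet}^{\nu})$ positive-dimensional, and take $\rho$ a generic point of such a component, avoiding the finitely many points of $\bigcup_{j<j_0}\supp H_j$. Then all $E^2_{p,q}$ with $q<j_0$ vanish at $\rho$, the corner term $E^2_{0,j_0}=H_{j_0}(C_{\bullet}^{\nu})\otimes_R\C_{\rho}$ is nonzero and supports no nontrivial incoming or outgoing differentials, so it survives to $E^{\infty}$ and $H_{j_0}(C_{\bullet}^{\nu}\otimes_R\C_{\rho})\ne 0$; letting $\rho$ range over a dense open subset of the component shows $B$ is infinite. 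This gives $A$ finite $\Leftrightarrow B$ finite, and hence the theorem.

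The step I expect to be the main obstacle is exactly this last survival argument. In moving from the support of the homology modules to the fiberwise twisted homology, a class in $\supp H_{j_0}$ could a priori be killed by a spectral-sequence differential, placing $\rho$ in $A$ but not in $B$. Choosing $j_0$ minimal and $\rho$ generic, so that every lower $\Tor$ contribution vanishes and $E^2_{0,j_0}=E^{\infty}_{0,j_0}$, is what excludes this cancellation; making the genericity precise and verifying that it produces infinitely many points of $B$ is the technical heart of the proof.
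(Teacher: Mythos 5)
The paper itself gives no proof of this theorem---it is imported from \cite{DF}, \cite{PS-plms}, and \cite{Su}---but your argument is a correct, self-contained reconstruction of precisely the route taken in those references: Dwyer--Fried's reduction of the finiteness of $b_j(X^{\nu})$, $j\le i$, to the finiteness of the support locus $A=\bigcup_{j\le i}\supp_R H_j(X^{\nu},\C)$, followed by the identification of that locus with $\hat\nu^{-1}(\WW^i(X))$, i.e.\ with $\exp(P\otimes\C)\cap\WW^i(X)$, via the universal-coefficients spectral sequence. The only refinement worth noting is that your final generic-point, contrapositive step is not needed: running the same corner argument at an \emph{arbitrary} $\rho\in A$, with $j_0$ chosen minimal so that $\rho\in\supp_R H_{j_0}(X^{\nu},\C)$, Nakayama (using finite generation, valid since $j_0\le k$) gives $E^2_{0,j_0}=H_{j_0}(X^{\nu},\C)\otimes_R\C_{\rho}\ne 0$ with all incoming and outgoing differentials vanishing, so in fact $A=B$ pointwise, not merely ``$A$ finite if and only if $B$ finite.''
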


In other words, an $r$-plane $P\subset \Q^n$ belongs 
to $\Omega^i_r(X)$ if and only if the algebraic torus 
$T=\exp(P \otimes \C)$ intersects the characteristic 
variety $W=\WW^i(X)$ only in finitely many points.  When 
this happens, the exponential tangent cone $\tau_1(T\cap W)$ 
equals $\{0\}$, forcing $P\cap L=\{0\}$, for 
every subspace $L\subset \Q^n$ in the characteristic 
subspace arrangement $\CC_i(X)$.   As in 
\cite[Theorem 8.1]{Su}, we obtain the following 
``upper bound" for the Dwyer--Fried invariants of $X$:
\begin{equation}
\label{eq:ubound}
\Omega^i_r(X) \subseteq \bigg(
\bigcup_{L\in \CC_i(X)} \big\{P \in \Grass_r(H^1(X,\Q)) \bigmid 
P\cap L \ne \{0\} \big\} \bigg)^{\compl}.
\end{equation}

\subsection{The incidence correspondence}
\label{subsec:incidence}

The right side of \eqref{eq:ubound} may be reinterpreted 
in terms of the classical incidence correspondence from 
algebraic geometry. 

Let $V$ be a homogeneous variety in $\k^n$.  
Consider  the locus of $r$-planes in $\k^n$ meeting $V$, 
\begin{equation}
\label{eq:incident}
\sigma_r(V) = \big\{ P \in \Grass_r(\k^n) 
\bigmid P \cap  V \ne \{0\} \big\}.
\end{equation}
This set is a Zariski closed subset of the Grassmannian 
$\Grass_r(\k^n)$, called the {\em variety of incident 
$r$-planes}\/ to $V$. 

Particularly manageable is the case when $V$ is a 
non-zero linear subspace $L\subset \k^n$.  The corresponding 
incidence variety, $\sigma_r(L)$, is known as the  
{\em special Schubert variety}\/ defined by $L$. 
If $L$ has codimension $d$ in $\k^n$, then 
$\sigma_r(L)$ has codimension $d-r+1$ in $\Grass_r(\k^n)$. 

\begin{theorem}[\cite{Su}]
\label{thm:sch bound}
Let $X$ be a CW-complex with finite $k$-skeleton. 
Then
\begin{equation*}
\label{eq:omega schubert}
\Omega^i_r(X) \subseteq  \Grass_r(H^1(X,\Q)) \setminus 
\bigcup_{L\in \CC_i(X)} \sigma_r(L ),
\end{equation*}
for all $i\le k$ and $r\ge 1$. 
\end{theorem}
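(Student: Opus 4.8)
The plan is to reduce Theorem \ref{thm:sch bound} to the already-established upper bound \eqref{eq:ubound} by recognizing that the right-hand side of that inclusion is literally a union of special Schubert varieties. First I would recall the definition of the incidence variety $\sigma_r(L)$ from \eqref{eq:incident}: for a linear subspace $L\subset H^1(X,\Q)$, we have
\[
\sigma_r(L)=\big\{P\in \Grass_r(H^1(X,\Q)) \bigmid P\cap L\ne\{0\}\big\}.
\]
This is exactly the set appearing inside the union on the right side of \eqref{eq:ubound}, where $L$ ranges over the characteristic subspace arrangement $\CC_i(X)$. Taking complements, the inclusion \eqref{eq:ubound} reads precisely as
\[
\Omega^i_r(X)\subseteq \bigg(\bigcup_{L\in \CC_i(X)} \sigma_r(L)\bigg)^{\compl}
= \Grass_r(H^1(X,\Q))\setminus \bigcup_{L\in \CC_i(X)} \sigma_r(L),
\]
which is the assertion of the theorem. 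So the substantive content has already been packaged into \eqref{eq:ubound}, and what remains is to verify that the set-theoretic description matches the incidence-variety formalism.

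The one point that genuinely needs checking, rather than mere notational matching, is that each $\sigma_r(L)$ is a \emph{special Schubert variety}, in particular a Zariski closed subset of the Grassmannian, so that the complement is a well-defined Zariski open set and the statement is geometrically meaningful. I would invoke the standard fact from the incidence correspondence (already summarized in the paragraph preceding the theorem): since each $L\in\CC_i(X)$ is a nonzero rational linear subspace, $\sigma_r(L)$ is the special Schubert variety determined by $L$, of codimension $\operatorname{codim}(L)-r+1$ when that quantity is positive. The subspaces $L$ are nonzero because the exponential tangent cone $\tau_1(\WW^i(X))$, and hence $\CC_i(X)$, is trivial precisely when $1$ is isolated in $\WW^i(X)$; degenerate cases where $\CC_i(X)=\{0\}$ simply give $\sigma_r(\{0\})=\emptyset$, so the union is empty and the bound is vacuous.

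I do not anticipate a hard step here: the theorem is essentially a restatement of the previously obtained bound \eqref{eq:ubound} using the incidence-variety vocabulary introduced in \S\ref{subsec:incidence}. The only care required is bookkeeping—confirming that the passage to complements is handled correctly (a union of the ``bad'' incidence loci becomes, after complementation, the intersection of the good loci, which is the set of $r$-planes avoiding every $L\in\CC_i(X)$), and that the finiteness of the arrangement $\CC_i(X)$, guaranteed by Theorem \ref{thm:tau1}, ensures the union $\bigcup_{L}\sigma_r(L)$ is a finite union of Zariski closed sets and hence Zariski closed. Thus the mild obstacle, if any, is purely expository: making the identification between the two formulations transparent to the reader rather than overcoming any new mathematical difficulty.
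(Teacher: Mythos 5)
Your proposal is correct and follows essentially the same route as the paper: the paper itself obtains the bound \eqref{eq:ubound} from Theorem \ref{thm:df cv} (via the vanishing of the exponential tangent cone $\tau_1(\exp(P\otimes\C)\cap\WW^i(X))$), and then Theorem \ref{thm:sch bound} is exactly the restatement of \eqref{eq:ubound} in the incidence-variety language of \S\ref{subsec:incidence}, since $\sigma_r(L)=\{P \mid P\cap L\ne\{0\}\}$ by definition \eqref{eq:incident}. Your additional bookkeeping (Zariski-closedness of each $\sigma_r(L)$, finiteness of $\CC_i(X)$ via Theorem \ref{thm:tau1}, and the degenerate case $\tau_1(\WW^i(X))=\{0\}$) is consistent with the paper's treatment.
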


In other words, each Dwyer--Fried set $\Omega^i_r(X)$ 
is contained in the complement to the variety 
of incident $r$-planes to the $i$-th characteristic 
arrangement of $X$. 

\subsection{Straightness and the Dwyer--Fried invariants}
\label{subsec:df res vars}

Under a (local) straightness assumption, the bound 
from Theorem \ref{thm:sch bound} can be expressed 
in terms of simpler, purely cohomological data.  The 
next result proves Theorem \ref{thm:intro main}, part 
\eqref{main1} from the Introduction.

\begin{corollary}
\label{cor:res bound}
Suppose $X$ is locally $k$-straight. Then, for all $i\le k$ 
and $r\ge 1$, 
\[
\Omega^i_r(X) \subseteq \Grass_r(H^1(X,\Q)) \setminus 
\sigma_r(\RR^i(X,\Q)).
\]
\end{corollary}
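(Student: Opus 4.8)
The plan is to deduce Corollary~\ref{cor:res bound} from Theorem~\ref{thm:sch bound} by showing that, under local $k$-straightness, the variety of incident $r$-planes to the characteristic arrangement $\CC_i(X)$ coincides exactly with $\sigma_r(\RR^i(X,\Q))$. The starting point is the inclusion
\[
\Omega^i_r(X) \subseteq \Grass_r(H^1(X,\Q)) \setminus
\bigcup_{L\in \CC_i(X)} \sigma_r(L),
\]
which holds for \emph{any} finite-skeleton complex by Theorem~\ref{thm:sch bound}, so no straightness is needed for this half. The entire content of the corollary is therefore the identification of the right-hand side with the complement of $\sigma_r(\RR^i(X,\Q))$, and this is where the straightness hypothesis enters.

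The key observation is the following set-theoretic fact about incidence varieties: for any \emph{finite} collection of homogeneous subvarieties $V_1,\dots,V_m$ of $\k^n$, an $r$-plane $P$ meets the union $\bigcup_\alpha V_\alpha$ nontrivially if and only if it meets at least one $V_\alpha$ nontrivially; hence
\[
\sigma_r\Bigl(\,\bigcup_\alpha V_\alpha\Bigr) = \bigcup_\alpha \sigma_r(V_\alpha).
\]
First I would record this elementary compatibility of $\sigma_r$ with finite unions, since $\RR^i(X,\Q)$ is a \emph{finite} union of subspaces precisely under our hypothesis. Indeed, by Corollary~\ref{cor:rat res}\eqref{rs2}, local $k$-straightness gives
\[
\RR^i(X,\Q)=\bigcup_{L\in \CC_i(X)} L
\]
for all $i\le k$, expressing the rational resonance variety as the union of exactly the subspaces making up the characteristic arrangement $\CC_i(X)$. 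Applying the union formula for $\sigma_r$ to this decomposition yields
\[
\sigma_r(\RR^i(X,\Q)) = \bigcup_{L\in \CC_i(X)} \sigma_r(L),
\]
which is precisely the incidence locus appearing in the bound of Theorem~\ref{thm:sch bound}. Substituting this identity into that theorem immediately produces the asserted inclusion.

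The main obstacle, such as it is, is conceptual rather than technical: one must be careful that $\sigma_r$ commutes with the \emph{finite} union in $\CC_i(X)$, which is why local straightness is essential—it is exactly the hypothesis guaranteeing (via Theorem~\ref{thm:tau1} and Corollary~\ref{cor:rat res}) that $\RR^i(X,\Q)$ is a finite union of rationally defined linear subspaces, rather than a more complicated variety for which the componentwise reduction could fail. I expect the proof itself to be short, essentially a one-line citation of Corollary~\ref{cor:rat res}\eqref{rs2} followed by the substitution into Theorem~\ref{thm:sch bound}; the real work has already been carried out in establishing the rationality of the resonance decomposition.
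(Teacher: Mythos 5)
Your proposal is correct and follows essentially the same route as the paper: invoke Corollary~\ref{cor:rat res} (local $k$-straightness) to identify $\RR^i(X,\Q)$ with $\bigcup_{L\in \CC_i(X)} L$, then substitute into the bound of Theorem~\ref{thm:sch bound}, the paper leaving the compatibility $\sigma_r(\bigcup_L L)=\bigcup_L \sigma_r(L)$ implicit where you spell it out. One small quibble: that identity is a set-theoretic triviality valid for arbitrary unions, so finiteness is not needed for it (it matters only for Zariski-closedness of the incidence locus); the real role of straightness is solely the identification of $\RR^i(X,\Q)$ with the characteristic arrangement, just as you say.
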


\begin{proof}
By Corollary \ref{cor:rat res}, we have 
$\tau_1(\WW^i(X))=\RR^i(X)$. 
Hence, $\bigcup_{L\in \CC_i(X)} L= \RR^i(X,\Q)$, 
and the desired conclusion follows from 
Theorem \ref{thm:sch bound}.
\end{proof}

Under a more stringent straightness assumption, 
the above inclusion holds as an equality.   The 
next result proves Theorem \ref{thm:intro main}, 
part \eqref{main2} from the Introduction.

\begin{theorem}
\label{thm:straight omega}
Suppose $X$ is $k$-straight. Then, for all $i\le k$ 
and $r\ge 1$,
\[
\Omega^i_r(X) = \Grass_r(H^1(X,\Q)) \setminus 
\sigma_r(\RR^i(X,\Q)).
\]
\end{theorem}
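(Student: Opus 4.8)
The inclusion $\subseteq$ is already in hand: a $k$-straight space is in particular locally $k$-straight, so Corollary \ref{cor:res bound} yields $\Omega^i_r(X) \subseteq \Grass_r(H^1(X,\Q)) \setminus \sigma_r(\RR^i(X,\Q))$ for all $i\le k$ and $r\ge 1$. The plan is therefore to establish the reverse inclusion $\supseteq$, and the point of the argument is that the extra hypothesis \eqref{v3} — equivalently, finiteness of the sets $Z_i$ supplied by Theorem \ref{thm:alt straight} — is exactly what makes it work.

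So I would start with an $r$-plane $P$ lying in the complement of $\sigma_r(\RR^i(X,\Q))$, that is, with $P\cap \RR^i(X,\Q)=\{0\}$. By Corollary \ref{cor:rat res}\eqref{rs2} we have $\RR^i(X,\Q)=\bigcup_{L\in\CC_i(X)}L$, so this condition unwinds to $P\cap L=\{0\}$ for every subspace $L$ in the (finite) characteristic arrangement $\CC_i(X)$. In view of Theorem \ref{thm:df cv}, membership $P\in\Omega^i_r(X)$ is equivalent to the statement that the algebraic torus $T=\exp(P\otimes\C)$ meets $W=\WW^i(X)$ in only finitely many points, so that is what I aim to prove.

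Here I invoke the $k$-straight form of Theorem \ref{thm:alt straight} to write $W=\big(\bigcup_{L\in\CC_i(X)}\exp(L\otimes\C)\big)\cup Z_i$ with $Z_i$ finite, and intersect with $T$:
\[
T\cap W=\bigcup_{L\in\CC_i(X)}\big(T\cap\exp(L\otimes\C)\big)\ \cup\ (T\cap Z_i).
\]
The piece $T\cap Z_i$ is finite because $Z_i$ is. The substance lies in the finiteness of each $T\cap\exp(L\otimes\C)$. Being the intersection of two connected algebraic subtori, this set is an algebraic subgroup of $(\C^\times)^n$, hence finite as soon as its identity component is trivial. I would verify triviality of the identity component either by cocharacter lattices — the identity component is $\exp\big((P\cap L)\otimes\C\big)$, which collapses to $\{1\}$ since $P\cap L=\{0\}$ — or, staying inside the paper's machinery, via the exponential tangent cone: since $\tau_1$ commutes with intersections and agrees with the tangent space on a subtorus, $\tau_1\big(T\cap\exp(L\otimes\C)\big)=(P\otimes\C)\cap(L\otimes\C)=(P\cap L)\otimes\C=\{0\}$, the last step using that $P$ and $L$ are both rationally defined, so trivial intersection over $\Q$ persists after complexifying. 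For an algebraic subgroup $H$, the germ of $H$ at $1$ is that of its identity component, whence $\tau_1(H)=\{0\}$ forces $H$ to be finite.

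Assembling the pieces, $T\cap W$ is a finite union of finite sets, hence finite, and Theorem \ref{thm:df cv} places $P$ in $\Omega^i_r(X)$, completing the reverse inclusion. The step to treat with care — and the natural main obstacle — is precisely the passage from $P\cap L=\{0\}$ to finiteness of the torus intersection: one must use that $L\in\CC_i(X)\subset H^1(X,\Q)$ and $P$ are rational, so that the intersected subgroup carries no one-parameter subgroup. The decomposition of $W$ into subtori together with a \emph{finite} set is exactly what \eqref{v3} provides beyond local straightness, and it is this feature whose failure produces the strict inclusion in Example \ref{ex:nonstraight product}.
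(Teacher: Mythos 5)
Your proof is correct and follows essentially the same route as the paper: both arguments rest on Theorem \ref{thm:df cv} combined with the decomposition of $\WW^i(X)$ supplied by Theorem \ref{thm:alt straight}, and your extra work only makes explicit the finiteness of each $\exp(P\otimes\C)\cap\exp(L\otimes\C)$ (via rationality of $P$ and $L$), which the paper leaves implicit in passing from its equation for $\Omega^i_r(X)$ in terms of finiteness to the one in terms of $P\cap L=\{0\}$. One small slip: the example the paper invokes to show the necessity of condition \eqref{v3} for this theorem is Example \ref{ex:straight c again}, not Example \ref{ex:nonstraight product}, though both do exhibit failures of \eqref{v3}.
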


\begin{proof}
By Theorem \ref{thm:df cv}, we have
\begin{equation}
\label{eq:omega ww}
\Omega^i_r(X)  = 
\big\{P \bigmid
\exp(P \otimes \C) \cap \WW^i(X)\  
\text{is finite} \big\}.
\end{equation}
Since $X$ is $k$-straight, Theorem \ref{thm:alt straight}, 
part \eqref{w1} yields
\begin{equation}
\label{eq:omega cc}
\Omega^i_r(X)  
=\big\{P \bigmid
P\cap L =\{0\},\, \text{for all $L\in \CC_i(X)$} \big\}. 
\end{equation}
By Theorem \ref{thm:alt straight}, 
part \eqref{w2}, the right side of \eqref{eq:omega cc} 
is the complement to $\sigma_r(\RR^i(X,\Q))$, and we 
are done.
\end{proof}

Particularly interesting is the case when all the components 
of $\RR^i(X)$ have the same codimension, say, $r$. 
In this situation, $\Omega^i_r(X)$ is the complement 
of the rational Chow divisor of $\RR^i(X,\Q)$. 

The previous theorem yields a noteworthy qualitative result about 
the Dwyer--Fried sets of straight spaces, in arbitrary ranks $r\ge 1$. 

\begin{corollary}
\label{cor:omega deep}
Let $X$ be a $k$-straight space.  Then each set $\Omega^i_r(X)$ 
is the complement of a finite union of special Schubert 
varieties in the Grassmannian of $r$-planes in $H^1(X,\Q)$.  
In particular, $\Omega^i_r(X)$ is a Zaris\-ki open set in 
$\Grass_r(H^1(X,\Q))$.
\end{corollary}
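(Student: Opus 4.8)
The plan is to derive Corollary \ref{cor:omega deep} directly from Theorem \ref{thm:straight omega}, treating it as an essentially formal consequence once the equality for the $\Omega$-sets is in hand. First I would invoke Theorem \ref{thm:straight omega}: since $X$ is $k$-straight, for each $i\le k$ and $r\ge 1$ we have
\[
\Omega^i_r(X) = \Grass_r(H^1(X,\Q)) \setminus \sigma_r(\RR^i(X,\Q)).
\]
The remaining work is to unpack the right-hand side and verify the two assertions: that the complement of $\Omega^i_r(X)$ is a \emph{finite} union of special Schubert varieties, and that $\Omega^i_r(X)$ is therefore Zariski open.

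Next I would use the structural description of the resonance variety under straightness. By Corollary \ref{cor:rat res}\eqref{rs2}, the rational resonance variety decomposes as $\RR^i(X,\Q)=\bigcup_{L\in \CC_i(X)} L$, a \emph{finite} union of rationally defined linear subspaces (finiteness coming from the fact that the characteristic arrangement $\CC_i(X)$ has finitely many members, since $\tau_1(\WW^i(X))$ is a finite union of linear subspaces by Theorem \ref{thm:tau1}). The key algebraic observation is that the incidence variety $\sigma_r$ commutes with finite unions: an $r$-plane $P$ meets $\bigcup_L L$ nontrivially if and only if it meets some individual $L$ nontrivially, so
\[
\sigma_r(\RR^i(X,\Q)) = \bigcup_{L\in \CC_i(X)} \sigma_r(L).
\]
Each $L$ is a non-zero linear subspace (I would note $0\in \RR^i$ always, but the relevant incidence is governed by the positive-dimensional $L$'s; for $L=\{0\}$ the set $\sigma_r(\{0\})$ is empty and drops out), so by the discussion in \S\ref{subsec:incidence} each $\sigma_r(L)$ is precisely the special Schubert variety defined by $L$. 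Thus the complement of $\Omega^i_r(X)$ is a finite union of special Schubert varieties, which is the first claim.

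Finally, Zariski openness follows immediately: each special Schubert variety $\sigma_r(L)$ is Zariski closed in $\Grass_r(H^1(X,\Q))$ (as recalled in \S\ref{subsec:incidence}), a finite union of closed sets is closed, and hence $\Omega^i_r(X)$, being the complement of such a finite union, is Zariski open. I do not expect a serious obstacle here, since the corollary is a bookkeeping consequence of results already proved; the only point requiring a moment of care is confirming that $\CC_i(X)$ is finite and that the commutation $\sigma_r(\bigcup L)=\bigcup \sigma_r(L)$ is valid, both of which are straightforward. If anything is mildly delicate, it is being explicit about the rationality: one must work over $\Q$ throughout, using $\RR^i(X,\Q)$ and the rational subspaces $L$, so that the Schubert varieties are genuinely defined over $\Q$ and $\Omega^i_r(X)$ is a Zariski open subset of the \emph{rational} Grassmannian.
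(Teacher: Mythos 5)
Your proof is correct and follows exactly the paper's route: the paper states this corollary as an immediate consequence of Theorem \ref{thm:straight omega}, with the implicit justification being precisely what you spell out—the decomposition $\RR^i(X,\Q)=\bigcup_{L\in\CC_i(X)}L$ into finitely many rational linear subspaces (Corollary \ref{cor:rat res}), the compatibility $\sigma_r\bigl(\bigcup_L L\bigr)=\bigcup_L\sigma_r(L)$, and the Zariski-closedness of each special Schubert variety $\sigma_r(L)$. Your extra care about the degenerate case $L=\{0\}$ and about working over $\Q$ is sound but not needed beyond what the paper already records in \S\ref{subsec:incidence}.
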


The straightness hypothesis is crucial for 
Theorem \ref{thm:straight omega} to hold. 
The next example shows the necessity of 
condition \eqref{v3} from Definition \ref{def:straight}.  

\begin{example}
\label{ex:straight c again}
Let $G$ be the group from Example \ref{ex:straight c}. 
Recall we have 
$\WW^1(G)=\{1\}\cup \{t \in (\C^{\times})^2 \mid t_1=-1\}$,  
but $\RR^1(G)=\{0\}$.  Thus, 
$\Omega^1_2(G)=\emptyset$, yet 
$\sigma_2(\RR^1(G,\Q))^{\compl}=\pt$. 
\end{example}

\section{The influence of formality}
\label{sec:formality}

An important property that bridges the gap between the tangent 
cone to a characteristic variety and the corresponding 
resonance variety is that of formality.  

\subsection{Formality}
\label{subsec:formal}

As before, let $X$ be connected CW-complex with finite 
$1$-skeleton.  In \cite{Su77}, Sullivan constructs 
an algebra $A_{\PL}(X, \Q)$ of polynomial differential forms 
on $X$ with coefficients in $\Q$, and provides it with a 
natural  commutative differential graded algebra (cdga) structure.  

Let $H^*(X,\Q)$ be the rational cohomology algebra 
of $X$, endowed with the zero differential.  
The space $X$ is said to be {\em formal}\/ if there is a 
zig-zag of cdga morphisms connecting $A_{\PL}(X, \Q)$ 
to $H^*(X,\Q)$, with each such morphism inducing an 
isomorphism in cohomology. The space $X$ is merely 
{\em $k$-formal}\/  (for some $k\ge 1$) if each of these 
morphisms induces an isomorphism in degrees up to $k$, 
and a monomorphism in degree $k+1$.

Examples of formal spaces include rational cohomology 
tori, surfaces, compact connected Lie groups, as well 
as their classifying spaces.   On the other hand, the 
only nilmanifolds which are formal are tori. Formality 
is preserved under wedges and products of spaces, 
and connected sums of manifolds.  

The $1$-minimality property of a space $X$ 
depends only on its fundamental group, $G=\pi_1(X,x_0)$.  
Alternatively, a finitely generated group $G$ is $1$-formal 
if and only if its Malcev Lie algebra admits a quadratic 
presentation.   Examples of $1$-formal groups include free 
groups and free abelian groups of finite rank, surface groups, 
and groups with first Betti number equal to $0$ or $1$.  
The $1$-formality property is preserved under free products 
and direct products.  

\subsection{The tangent cone formula}
\label{subsec:formal tc}

The main connection between the formality property 
and the cohomology jump loci is provided by the 
following theorem from \cite{DPS-duke}.  
For more details and references, 
we refer to the recent survey \cite{PS-bmssmr}.   
 
\begin{theorem}[\cite{DPS-duke}]
\label{thm:tcone}
Let $X$ be a $1$-formal space.  For each $d>0$,
the exponential map $\exp\colon H^1(X, \C) \to H^1(X, \C^{\times})$ 
restricts to an isomorphism of analytic germs, 
$\exp\colon (\RR^1_d(X),0)  \isom(\VV^1_d(X),1)$. 
Thus, the following ``tangent cone formula" holds:
\[
\tau_1(\VV^1_d(X))=\TC_1(\VV^1_d(X))=\RR^1_d(X).
\]
\end{theorem}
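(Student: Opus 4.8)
The plan is to reduce the entire statement to the single assertion that $\exp$ restricts to an isomorphism of analytic germs $(\RR^1_d(X),0)\isom(\VV^1_d(X),1)$; granting this, the triple equality is then formal. Indeed, take $a\in\RR^1_d(X)$. Since $\RR^1_d(X)$ is a cone (Lemma \ref{lem:res ideal}), $\lambda a\in\RR^1_d(X)$ for every $\lambda\in\C$, so the germ isomorphism gives $\exp(\lambda a)\in\VV^1_d(X)$ for all $\lambda$ near $0$. Pulling back the defining equations of $\VV^1_d(X)$ along the entire map $\lambda\mapsto\exp(\lambda a)$ yields holomorphic functions of $\lambda$ vanishing on a neighborhood of $0$, hence vanishing identically by the identity theorem; thus $\exp(\lambda a)\in\VV^1_d(X)$ for all $\lambda\in\C$, that is, $a\in\tau_1(\VV^1_d(X))$. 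Together with the inclusions $\tau_1(\VV^1_d(X))\subseteq\TC_1(\VV^1_d(X))\subseteq\RR^1_d(X)$, which come from Lemma \ref{lem:ttcone} and Theorem \ref{thm:lib} (the tangent cone at $1$ only sees the germ, which lies in the identity component), this forces $\tau_1(\VV^1_d(X))=\TC_1(\VV^1_d(X))=\RR^1_d(X)$; Theorem \ref{thm:tau1} then shows in passing that $\RR^1_d(X)$ is a union of rationally defined linear subspaces.

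To build the germ isomorphism I would model rank $1$ local systems near the trivial one by flat connections. A character close to $1$ has the form $\exp(\omega)$ for a closed $1$-form $\omega$ on $X$, gauge equivalence corresponds to the cohomology class of $\omega$, and the twisted homology groups have the same dimensions as the cohomology of the deformed complex, $\dim H_i(X,\L_{\exp(\omega)})=\dim H^i\big(A_{\PL}(X,\C),\,d+\omega\wedge\big)$. Thus $\exp$ carries the germ at $0$ of the locus $\{\,\omega:\dim H^1(A_{\PL}(X,\C),d+\omega\wedge)\ge d\,\}$ onto the germ of $\VV^1_d(X)$ at $1$, and the problem becomes to identify this locus, near $0$, with $\RR^1_d(X)=\{\,a:\dim H^1(A,\cdot a)\ge d\,\}$, where $A=H^*(X,\C)$.

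This is where $1$-formality is indispensable. A $1$-formal space comes equipped with a zig-zag of cdga quasi-isomorphisms connecting $A_{\PL}(X,\C)$ to $(A,0)$ that is an isomorphism in degree $1$ and a monomorphism in degree $2$. The key---and hardest---step is to show that this zig-zag survives deformation of the differential by a closed $1$-form, so that $(A_{\PL}(X,\C),d+\omega\wedge)$ and the Aomoto complex $(A,\cdot\bar\omega)$ have the same $H^1$ for all small $\omega$. A plain quasi-isomorphism does \emph{not} automatically remain one after such a twist, because $d+\omega\wedge$ mixes cohomological degrees; mastering this is the whole content of the theorem, and the reason a merely quasi-isomorphic but non-formal minimal model would not suffice.

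To carry this out I would introduce a formal parameter, deforming by $t\omega$ over $\C[[t]]$ (or its Artinian quotients $\C[t]/(t^m)$). Each map in the zig-zag is $\C[[t]]$-linear, and its associated graded for the $t$-adic filtration is the undeformed quasi-isomorphism, so a standard complete-filtered comparison lemma promotes it to a quasi-isomorphism of the $t$-deformed complexes. Because $H^1$ of the deformed complex involves only $A^0\to A^1\to A^2$, the degree-$1$ isomorphism and degree-$2$ monomorphism supplied by $1$-formality are exactly what is needed to match dimensions through degree $1$. Finally I would specialize the parameter to genuine small values of $\omega$, using that the jump loci are cut out by determinantal, hence coherent, conditions, so that the rank count over $\C[[t]]$ transfers to an honest equality of germs. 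An equivalent and more structural route is to note that, in rank $1$, the germ of the character variety together with its cohomology-jump stratification is governed by a differential graded Lie algebra that formality renders formal; since a formal dgla has a quadratic deformation functor, the jump loci coincide with their own tangent cones, namely $\RR^1_d(X)$.
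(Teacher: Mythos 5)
The paper offers no proof of this statement---it is quoted verbatim from \cite{DPS-duke}---so the relevant comparison is with the proof given there. Your opening reduction of the triple equality to the germ isomorphism is correct and complete: the cone property of $\RR^1_d(X)$ plus the identity theorem gives $\RR^1_d(X)\subseteq\tau_1(\VV^1_d(X))$, and Lemma \ref{lem:ttcone} together with Theorem \ref{thm:lib} (with your correct remark that germs at $1$ only see the identity component) closes the chain of inclusions. That part stands on its own.

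The genuine gap is in your construction of the germ isomorphism, and it sits exactly where you declare the setup to be routine. The identity $\dim_{\C}H_i(X,\L_{\exp(\omega)})=\dim_{\C}H^i\big(A_{\PL}(X,\C),\,d+\omega\wedge\big)$, uniformly for $\omega$ near $0$, is not an available fact in the generality of this theorem: a $1$-formal space here is an arbitrary connected CW-complex with finite skeleta, so there is no de Rham complex, and the assertion that the $\omega$-twisted Sullivan algebra computes local-system (co)homology near the trivial character is a deep theorem in its own right---it is essentially the main content of the later work of Dimca--Papadima and of Budur--Wang on jump loci via algebraic models, proved years after \cite{DPS-duke}; even for smooth manifolds the germ-level statement needs a family/coherence argument, not just the pointwise twisted de Rham theorem. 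So you have inverted the difficulty: the zig-zag transfer that you call ``the whole content of the theorem'' is the comparatively standard step (a complete-filtration comparison of the kind you sketch), while the input you assume at the outset is the hard part. A second, smaller defect: quasi-isomorphisms over $\C[[t]]$ along each line $t\mapsto t\omega$ only control formal germs along lines; to get an equality of analytic germs one should run the argument over $\C[[x_1,\dots,x_n]]$ with the universal twist $d+\sum_j x_j e_j\wedge$ and then descend from formal to analytic germs by flatness. This is patchable, but as written it is hand-waved.

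For contrast, the cited proof in \cite{DPS-duke} avoids flat connections entirely: it describes the germ of $\VV^1_d(X)$ at $1$ through the $I$-adic completion of the Alexander invariant $B=G'/G''$ as a module over $\C[G_{\ab}]$, uses $1$-formality in the form ``the Malcev Lie algebra of $G=\pi_1(X)$ is quadratic'' to identify $\widehat{B}$ with the completion of the infinitesimal Alexander invariant of the holonomy Lie algebra, and recovers $\RR^1_d(X)$ as the support of the latter, so that the germ isomorphism becomes an equality of determinantal loci under $\exp$. Your deformation-theoretic outline can in principle be completed---that is how the theorem was later reproved and generalized---but as submitted its central claim is assumed rather than proved.
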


As a consequence, the irreducible components of 
$\RR^1_d(X)$ are all rationally defined subspaces, while 
the components of $\VV^1_d(X)$ passing through 
the origin are all rational subtori of the form $\exp(L)$, with $L$ 
running through the irreducible components of $\RR^1_d(X)$. 
The next corollary is immediate. 

\begin{corollary}
\label{cor:formal-ls}
Every $1$-formal space is locally $1$-straight.
\end{corollary}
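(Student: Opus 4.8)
The plan is to deduce Corollary \ref{cor:formal-ls} directly from Theorem \ref{thm:tcone} together with the definition of local $1$-straightness (Definition \ref{def:loc straight}). Recall that local $1$-straightness requires, for $i=1$, two things: condition \eqref{v1}, that every component of $\WW^1(X)$ through the origin is an algebraic subtorus, and condition \eqref{v2}, that $\TC_1(\WW^1(X))=\RR^1(X)$. Both of these concern the \emph{depth-$1$} varieties $\WW^1(X)=\WW^1_1(X)\cap\wG^0$ and $\RR^1(X)=\RR^1_1(X)$, so the first step is to pass from the depth-graded tangent cone formula of Theorem \ref{thm:tcone} (which is stated for each $d>0$) to the depth-$1$ statement we actually need.

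First I would invoke Theorem \ref{thm:tcone} with $d=1$, which asserts that the exponential map restricts to an isomorphism of analytic germs $\exp\colon(\RR^1_1(X),0)\isom(\VV^1_1(X),1)$ and yields $\TC_1(\VV^1_1(X))=\RR^1_1(X)$. Restricting to the identity component $\wG^0$ gives the corresponding statement for $\WW^1(X)=\WW^1_1(X)$ and $\RR^1(X)=\RR^1_1(X)$: namely $\TC_1(\WW^1(X))=\RR^1(X)$, which is precisely condition \eqref{v2}. The germ isomorphism is the key input, because it transports the local structure of $\RR^1(X)$ at $0$ to that of $\WW^1(X)$ at $1$.

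Next I would verify condition \eqref{v1}. By the consequence drawn immediately after Theorem \ref{thm:tcone}, the irreducible components of $\RR^1(X)$ are rationally defined linear subspaces, and the components of $\VV^1_1(X)$ through the origin are exactly the rational subtori $\exp(L)$ as $L$ ranges over those linear components. Intersecting with $\wG^0$, the components of $\WW^1(X)$ passing through $1$ are of the form $\exp(L\otimes\C)$ for $L$ a rational linear subspace, hence are algebraic subtori. This establishes \eqref{v1}.

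The two conditions \eqref{v1} and \eqref{v2} being verified for $i=1$, local $1$-straightness follows by definition, completing the proof. I do not expect a genuine obstacle here: the entire content is already packaged in Theorem \ref{thm:tcone} and the remark following it, and the only thing to be careful about is the bookkeeping between the depth-indexed varieties $\VV^1_d,\RR^1_d$ and the depth-$1$ unions $\WW^1,\RR^1$, together with the restriction from the full character group $H^1(X,\C^\times)$ to its identity component $\wG^0=H^1(X,\C^\times)^0$. Since local straightness only constrains the components of $\WW^1(X)$ \emph{through the origin}, no hypothesis on translated or $0$-dimensional components is needed, which is exactly why formality yields only \emph{local} straightness and not straightness in general.
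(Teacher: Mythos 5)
Your proposal is correct and follows exactly the paper's route: the paper deduces the corollary as an immediate consequence of Theorem \ref{thm:tcone} and the remark following it (components of $\RR^1_d(X)$ are rationally defined linear subspaces, and the components of $\VV^1_d(X)$ through the origin are the subtori $\exp(L)$), which is precisely the two-step verification of conditions \eqref{v1} and \eqref{v2} that you spell out. Your extra care with the depth-$1$ bookkeeping and the restriction to the identity component $\wG^0$ is harmless detail the paper leaves implicit.
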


In general, though, $1$-formal spaces need not be $1$-straight, 
as we shall see in Example \ref{ex:deleted B3}.  
Conversely, $1$-straight spaces need not be $1$-formal,  
as we shall see in Example \ref{ex:bundle}. 

\subsection{Formality and the Dwyer--Fried invariants}
\label{subsec:formal df}

The formality of a space has definite implications 
on the nature of its $\Omega$-invariants, and their 
relationship to the resonance varieties.

\begin{corollary}
\label{cor:df 1f}
If $X$ is $1$-formal, then 
$\Omega^1_r(X) \subseteq \sigma_r(\RR^1(X,\Q))^{\compl}$, 
for all $r\ge 1$. 
\end{corollary}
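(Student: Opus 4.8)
The plan is to deduce Corollary \ref{cor:df 1f} from the chain of results already in place, recognizing that it is a direct consequence of local $1$-straightness combined with the general upper bound. First I would invoke Corollary \ref{cor:formal-ls}, which tells us that every $1$-formal space is locally $1$-straight. This is the crucial hypothesis-matching step: it converts the formality assumption into the straightness property that the Dwyer--Fried machinery requires.

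With local $1$-straightness in hand, I would then apply Corollary \ref{cor:res bound} in the case $i=1$ (which is legitimate since a $1$-formal space has finite $1$-skeleton and we only need the degree-$1$ conclusion). That corollary gives precisely
\[
\Omega^1_r(X) \subseteq \Grass_r(H^1(X,\Q)) \setminus
\sigma_r(\RR^1(X,\Q)),
\]
for all $r\ge 1$. Rewriting the right-hand side using the complement notation $\sigma_r(\RR^1(X,\Q))^{\compl}$ yields exactly the asserted inclusion, and the proof is complete.

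I do not anticipate a genuine obstacle here, since the substantive work has been done upstream: the tangent cone formula of Theorem \ref{thm:tcone} underlies Corollary \ref{cor:formal-ls}, and the identification $\bigcup_{L\in \CC_1(X)} L = \RR^1(X,\Q)$ (from Corollary \ref{cor:rat res}) underlies Corollary \ref{cor:res bound}. The only point deserving a moment's care is that we are entitled to use Corollary \ref{cor:res bound} with $k=1$; this holds because $1$-formality guarantees local $1$-straightness, and the corollary's conclusion for $i=1$ requires only that $X$ be locally $1$-straight, which is exactly what Corollary \ref{cor:formal-ls} supplies. Thus the argument is essentially a two-line citation, and the main interest lies in assembling the correct results rather than in any new computation.
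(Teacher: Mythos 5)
Your proposal is correct and matches the paper's own proof exactly: the paper derives this corollary by combining Corollary \ref{cor:formal-ls} (every $1$-formal space is locally $1$-straight) with Corollary \ref{cor:res bound} applied in degree $1$. Your elaboration of why the hypotheses match is accurate, so nothing further is needed.
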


\begin{proof}
Follows from Corollaries \ref{cor:res bound} and \ref{cor:formal-ls}.
\end{proof}

\begin{corollary}
\label{cor:df tori}
Let $X$ be a $1$-formal space.  Suppose all positive-dimensional 
components of $\WW^1(X)$ pass through $1$.  Then:
\begin{enumerate}
\item \label{1f1}  $X$ is $1$-straight.
\item \label{1f2}   $\Omega^1_r(X) = \sigma_r(\RR^1(X,\Q))^{\compl}$, 
for all $r\ge 1$.
\end{enumerate}
\end{corollary}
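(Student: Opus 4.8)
The plan is to derive both assertions by assembling the straightness machinery already in place, noting that the two conclusions are logically sequential: part~\eqref{1f1} upgrades the local straightness furnished by formality to full $1$-straightness, and part~\eqref{1f2} is then an immediate application of Theorem~\ref{thm:straight omega}.

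For part~\eqref{1f1}, I would first invoke Corollary~\ref{cor:formal-ls}, which says that $1$-formality of $X$ already guarantees local $1$-straightness. By Definition~\ref{def:loc straight}, this means straightness conditions~\eqref{v1} and~\eqref{v2} hold for $i=1$: the components of $\WW^1(X)$ through the origin are algebraic subtori, and $\TC_1(\WW^1(X))=\RR^1(X)$. The only condition remaining for $1$-straightness (Definition~\ref{def:straight}) is~\eqref{v3}, namely that every component of $\WW^1(X)$ not passing through $1$ be $0$-dimensional. But this is exactly the contrapositive of the standing hypothesis that all positive-dimensional components of $\WW^1(X)$ pass through~$1$. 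Hence all three conditions hold for $i=1$, and $X$ is $1$-straight.

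With $X$ now known to be $1$-straight, part~\eqref{1f2} follows by applying Theorem~\ref{thm:straight omega} in the case $k=i=1$, which gives $\Omega^1_r(X)=\Grass_r(H^1(X,\Q))\setminus\sigma_r(\RR^1(X,\Q))$ for all $r\ge 1$, that is, the asserted equality $\Omega^1_r(X)=\sigma_r(\RR^1(X,\Q))^{\compl}$.

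The step requiring the most care is simply recognizing that the hypothesis ``all positive-dimensional components of $\WW^1(X)$ pass through $1$'' is precisely the negation of condition~\eqref{v3}; everything else is a direct citation. I do not anticipate any genuine computational obstacle, since the substantive work is carried by Theorem~\ref{thm:tcone} (which underlies Corollary~\ref{cor:formal-ls}) and by Theorem~\ref{thm:straight omega}.
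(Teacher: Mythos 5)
Your proof is correct and follows exactly the paper's own route: Corollary \ref{cor:formal-ls} together with the extra hypothesis yields $1$-straightness, and then Theorem \ref{thm:straight omega} with $k=1$ gives the stated equality. One small wording slip: in your closing paragraph, the hypothesis is the \emph{contrapositive} (hence an equivalent reformulation) of condition \eqref{v3}, not its ``negation''---your proof body states this correctly.
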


\begin{proof}
Part \eqref{1f1} follows from Corollary \ref{cor:formal-ls} and 
the additional hypothesis. 
Part \eqref{1f2} now follows from Theorem \ref{thm:straight omega} 
(with $k=1$). 
\end{proof}

The hypothesis on the components of $\WW^1(X)$ is 
really needed in this corollary.  Indeed, if $X$ is the 
presentation $2$-complex for the group $G$ from 
Examples \ref{ex:straight c} and \ref{ex:straight c again}, 
then $H^*(X,\Q)\cong H^*(T^2,\Q)$, and so $X$ is formal.  
Yet, as we know,  $X$ is not $1$-straight, and 
$\Omega^1_2(X) \ne \sigma_2(\RR^1(X,\Q))^{\compl}$.  
We shall see another instance of this phenomenon in 
Example \ref{ex:deleted B3}. 

\subsection{The case of infinite cyclic covers}
\label{subsec:df1 formal}

In the case when $r=1$, Theorem \ref{thm:df cv} 
has the following consequence. 

\begin{corollary}[\cite{PS-plms}]
\label{cor:exp 1}
Let $\nu\colon \pi_1(X)\surj \Z$ be an epimorphism, and let 
$\bar\nu\in H^1(X,\Z)\subset H^1(X,\C)$ be the corresponding 
cohomology class.   If the exponential map restricts to 
an isomorphism of analytic germs, $(\RR^i_1(X),0)  
\cong (\VV^i_1(X),1)$, for all $i\le k$, then
\[
\sum_{i\le k} b_{i} (X^{\nu}) <\infty \same 
\bar\nu\not\in  \RR^{k}(X).
\]
\end{corollary}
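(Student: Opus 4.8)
The plan is to reduce the statement to the $r=1$ case of Theorem~\ref{thm:df cv} and then extract the equivalence from the germ-isomorphism hypothesis, using crucially that the resonance varieties are cones. First I would unwind the definitions. By Definition~\ref{def:df}, the condition $\sum_{i\le k} b_i(X^{\nu})<\infty$ is exactly the assertion that $P_{\nu}\in \Omega^k_1(X)$, where $P_{\nu}=\Q\bar\nu$ is the line spanned by $\bar\nu$. Theorem~\ref{thm:df cv} (with $r=1$) then recasts this as the requirement that $\exp(P_{\nu}\otimes\C)\cap \WW^k(X)$ be finite. So the whole corollary reduces to proving
\[
\exp(\C\bar\nu)\cap\WW^k(X)\ \text{is finite}\ \same\ \bar\nu\notin\RR^k(X).
\]

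The next step is to understand the subtorus $T:=\exp(\C\bar\nu)$. Because $\nu$ is surjective, $\bar\nu$ is a primitive integral class, so $\C\bar\nu$ is a rational line and $T$ is a \emph{closed} $1$-dimensional algebraic subtorus of $H^1(X,\C^{\times})^0$, with $\lambda\mapsto\exp(\lambda\bar\nu)$ surjecting $\C$ onto $T$. Since $T$ is irreducible of dimension $1$ and $\WW^k(X)$ is Zariski closed, the intersection $T\cap\WW^k(X)$ obeys a dichotomy: it is either finite or all of $T$. Moreover $T\subseteq\WW^k(X)$ holds precisely when $\exp(\lambda\bar\nu)\in\WW^k(X)$ for every $\lambda\in\C$, which is exactly the condition $\bar\nu\in\tau_1(\WW^k(X))$. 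Hence the finiteness on the left of the displayed equivalence is the same as $\bar\nu\notin\tau_1(\WW^k(X))$, and the corollary comes down to the single equivalence $\bar\nu\in\tau_1(\WW^k(X))\same\bar\nu\in\RR^k(X)$.

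This last equivalence is the heart of the matter, and it is where the hypothesis enters. The germ isomorphisms $\exp\colon(\RR^i_1(X),0)\isom(\VV^i_1(X),1)$ for $i\le k$ assemble, after intersecting with the identity component (so that the germs of $\VV^i_1$ and $\WW^i_1$ at $1$ coincide) and taking the union over the finitely many indices $i\le k$, into a single germ isomorphism $\exp\colon(\RR^k(X),0)\isom(\WW^k(X),1)$: concretely there are neighborhoods $U\ni 0$ and $V\ni 1$ so that $\exp|_U\colon U\to V$ is a biholomorphism carrying $\RR^k(X)\cap U$ onto $\WW^k(X)\cap V$. Now if $\bar\nu\in\RR^k(X)$, then since $\RR^k(X)$ is homogeneous (Lemma~\ref{lem:res ideal}) the whole segment $\{\lambda\bar\nu:|\lambda|<\varepsilon\}$ lies in $\RR^k(X)\cap U$ for small $\varepsilon$, so its image lies in $\WW^k(X)$; this is an infinite subset of $T\cap\WW^k(X)$, forcing $T\subseteq\WW^k(X)$ by the dichotomy, i.e.\ $\bar\nu\in\tau_1(\WW^k(X))$. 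Conversely, if $\bar\nu\in\tau_1(\WW^k(X))$ then $\exp(\lambda\bar\nu)\in\WW^k(X)\cap V$ for small $\lambda$; applying the inverse biholomorphism and the injectivity of $\exp|_U$ gives $\lambda\bar\nu\in\RR^k(X)$ for small $\lambda\ne 0$, whence $\bar\nu\in\RR^k(X)$ again by homogeneity.

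I expect the main obstacle to be the careful bookkeeping in this final paragraph: passing from the per-degree germ isomorphisms to one germ isomorphism for the unions $\RR^k(X)$ and $\WW^k(X)$, and ensuring the purely analytic-germ equivalence interacts correctly with the \emph{global} cone structure of $\RR^k(X)$ on the one side and with the germ-theoretic nature of $\tau_1$ on the other. Everything else---the translation through Definition~\ref{def:df} and Theorem~\ref{thm:df cv}, and the finite-or-everything dichotomy for the $1$-dimensional torus $T$---is essentially formal once these two geometric facts are in place.
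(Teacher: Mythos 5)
Your proposal is correct and follows exactly the route the paper intends: Corollary~\ref{cor:exp 1} is presented there as the $r=1$ specialization of Theorem~\ref{thm:df cv}, with the germ-isomorphism hypothesis converting finiteness of $\exp(\C\bar\nu)\cap\WW^k(X)$ into the non-resonance condition $\bar\nu\notin\RR^k(X)$. The details the paper leaves implicit---the finite-or-all dichotomy for the $1$-dimensional subtorus, the identification of containment $T\subseteq\WW^k(X)$ with $\bar\nu\in\tau_1(\WW^k(X))$, and the interplay of the germ isomorphism with the cone structure of $\RR^k(X)$---are exactly the ones you supply, and you supply them correctly.
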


Using Theorem \ref{thm:tcone} together with Corollary \ref{cor:exp 1}, 
we obtain the following immediate consequence. 

\begin{corollary}
\label{cor:df 1formal}
Suppose $G$ is a $1$-formal group. Then
\[
\Omega^1_1(G)=  \overline{\RR}^1(G,\Q)^{\compl}.
\]
\end{corollary}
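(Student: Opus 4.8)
The plan is to reduce the statement to the infinite cyclic cover criterion of Corollary \ref{cor:exp 1}, which is exactly suited to the rank $r=1$ situation. Fix a classifying space $X = K(G,1)$, so that $\Omega^1_1(G) = \Omega^1_1(X)$ and $\RR^1(G) = \RR^1(X)$, and recall that $\Grass_1(H^1(X,\Q))$ is the projective space $\bP(H^1(X,\Q))$. A point of this Grassmannian is a line $P = \Q\cdot\bar\nu$ spanned by a primitive integral class $\bar\nu \in H^1(X,\Z)$, and under the parametrization of \S\ref{subsec:df} it corresponds to an epimorphism $\nu\colon G \surj \Z$ with $P = P_\nu = \im(\nu^*)$. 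Since $P$ determines $\bar\nu$ up to sign, and the covers attached to $\nu$ and $-\nu$ coincide, the finiteness of $b_1(X^\nu)$ depends only on the line $P$; moreover, as the cover $X^\nu$ is connected, $b_1(X^\nu) < \infty$ is equivalent to $\sum_{i\le 1} b_i(X^\nu) < \infty$. Thus $P \in \Omega^1_1(G)$ if and only if $\sum_{i\le 1} b_i(X^\nu) < \infty$.

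The first substantive step is to check that the hypothesis of Corollary \ref{cor:exp 1} holds for $k=1$, i.e., that the exponential map restricts to isomorphisms of analytic germs $(\RR^i_1(X),0) \cong (\VV^i_1(X),1)$ for $i=0$ and $i=1$. The degree-zero case is immediate from $\RR^0_1(X) = \{0\}$ and $\VV^0_1(X) = \{1\}$. The degree-one case is precisely the tangent cone formula of Theorem \ref{thm:tcone}, applied with $d=1$, which is available here because $G$ is $1$-formal. Granting the hypothesis, Corollary \ref{cor:exp 1} gives, for each epimorphism $\nu\colon G \surj \Z$,
\[
\sum_{i\le 1} b_i(X^\nu) < \infty \same \bar\nu \notin \RR^1(X).
\]

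It then remains to rewrite the right-hand membership condition as incidence with the projectivized rational resonance variety. I would use two facts recorded earlier: $\RR^1(X) = \RR^1(X,\C)$ is a homogeneous cone, so $\bar\nu \in \RR^1(X)$ is a property of the line $\Q\cdot\bar\nu$ and is independent of the chosen representative; and $\RR^1(G,\Q)$ is the set of rational points of $\RR^1(G,\C)$, so for the rational class $\bar\nu$ one has $\bar\nu \in \RR^1(G,\C) \iff \bar\nu \in \RR^1(G,\Q)$. Passing to projectivizations, $\bar\nu \in \RR^1(G,\Q)$ is equivalent to $[\bar\nu] \in \overline{\RR}^1(G,\Q)$. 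Combining with the displayed equivalence yields $P \in \Omega^1_1(G) \iff [\bar\nu] \notin \overline{\RR}^1(G,\Q)$, which is exactly the claimed equality $\Omega^1_1(G) = \overline{\RR}^1(G,\Q)^{\compl}$.

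I do not anticipate any real obstacle: once $1$-formality is used to certify the germ-isomorphism hypothesis of Corollary \ref{cor:exp 1} via Theorem \ref{thm:tcone}, the conclusion is immediate. The only place needing mild care is the final translation, where one must keep separate the affine-versus-projective and the $\C$-versus-$\Q$ distinctions; the homogeneity of $\RR^1$ and the field-extension compatibility of resonance varieties make each of these passages routine.
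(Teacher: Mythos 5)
Your proposal is correct and takes essentially the same route as the paper, which deduces the corollary directly from Theorem \ref{thm:tcone} combined with Corollary \ref{cor:exp 1}. The extra details you supply---the degree-zero germ check, the identification of lines in $\Grass_1(H^1(X,\Q))$ with epimorphisms $\nu\colon G\surj\Z$ up to sign, and the passage from the affine complex resonance variety to its projectivized rational points---are exactly the steps the paper leaves implicit in calling the consequence ``immediate.''
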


In other words, if $X^{\nu}\to X$ is a regular, infinite cyclic 
cover of a $1$-formal space, then $b_1(X^{\nu})$ is finite 
if and only if the corresponding cohomology class, 
$\bar\nu \in H^1(X,\Z)$, is non-resonant.  

\section{Toric complexes}
\label{sec:toric}

In this section, we illustrate our techniques on a class 
of CW-com\-plexes that are carved out of an $n$-torus 
in a manner prescribed by a simplicial complex on 
$n$ vertices. Such ``toric" complexes are minimal, 
formal, and straight---thus, ideal from our point of view.

\subsection{Toric complexes and right-angled Artin groups}
\label{subsec:toric raag}

Let $T^n$ be the $n$-torus, endowed with the standard 
cell decomposition, and with basepoint $*$ the unique 
$0$-cell. For a simplex $\sigma \in [n]$, let $T^{\sigma}\subset T^n$ 
be the cellular subcomplex $T^{\sigma}= \{ x \in T^n \mid x_i = 
* \text{ if } i\notin \sigma\}$.  

\begin{definition}
\label{def:toric}
Let $K$ be a simplicial complex on $n$ vertices.  
The associated {\em toric complex}, $T_K$, is the union 
of all $T^{\sigma}$, with $\sigma$ running through the 
simplices of $K$.
\end{definition}

The $k$-cells of $T_K$ are in one-to-one 
correspondence with the $(k-1)$-simplices of $K$. 
Since the toric complex is a subcomplex of $T^{n}$, 
all boundary maps in $C_{\bullet}(T_K,\Z)$ vanish; 
thus, $T_K$ is a minimal cell complex. Evidently, 
$H_k(T_K,\Z)$ is isomorphic to $C^{\rm simp}_{k-1}(K,\Z)$,  
the free abelian group on the $(k-1)$-simplices of $K$.

Denote by $\sV$ be the set of $0$-cells of $K$, and 
by $\sE$ the set of $1$-cells of $K$.  The fundamental group 
of the toric complex $T_K$ is  then  the {\em right-angled 
Artin group}\/ associated to the graph $\G=K^{(1)}$, 
\begin{equation}
\label{eq:raag}
G_{\G} = \langle v \in \sV \mid vw = wv \text{ if } 
\{v,w\} \in \sE \rangle. 
\end{equation}
Groups of this sort interpolate between $G_{\G}=\Z^n$ in case 
$\G$ is a complete graph, and $G_{\G}=F_n$ in 
case $\G$ is a discrete graph. 

The toric complex construction behaves well with respect
to simplicial joins:  $T_{K*K'}=T_{K}\times T_{K'}$.  
Consequently, $G_{\G*\G'}=G_{\G}\times G_{\G'}$. 

A classifying space for the group $G_K$ is the toric complex 
$T_{\Delta}$, where $\Delta=\Delta_{K}$ is the flag complex 
of $K$, i.e., the maximal simplicial complex with 
$1$-skeleton equal to the graph $\G=K^{(1)}$. 

Finally, it is known from the work of Notbohm and Ray \cite{NR05}
that all toric complexes are formal spaces. In particular, 
all right-angled Artin groups are $1$-formal, a fact also 
proved in \cite{PS-mathann}.

\subsection{Cohomology jump loci}
\label{subsec:toric cjl}

As noted above, $H_1(T_K,\Z)=\Z^n$, with generators 
indexed by the vertex set $\sV=[n]$.  Thus, we may 
identify $H^1(T_K,\C)$ with the vector space $\C^{\sV}=\C^n$, 
and $H^1(T_K,\C^{\times})$ with the algebraic 
torus $(\C^{\times})^{\sV}=(\C^{\times})^n$.  For each 
subset $\sW \subseteq \sV$, let $\C^{\sW}$ be 
the respective coordinate subspace, and let 
$(\C^{\times})^{\sW}=\exp(\C^{\sW})$ be the 
respective algebraic subtorus.  

\begin{theorem}[\cite{PS-adv}]
\label{thm:cjl tc}
With notation as above,
\begin{equation*}
\label{eq:cjl tc}
\RR^i_d(T_K)= \bigcup_{\sW} \, 
\C^{\sW} 
\quad \text{and}  \quad
\VV^i_d(T_K)= \bigcup_{\sW} \, 
(\C^{\times})^{\sW},
\end{equation*}
where, in both cases, the union is taken over 
all subsets $\sW\subset\sV$ for which 
$\sum_{\sigma\in K_{\sV\setminus \sW}}
\dim_{\C} \widetilde{H}_{i-1-\abs{\sigma}} 
(\lk_{K_\sW}(\sigma),\C) \ge d$.
\end{theorem}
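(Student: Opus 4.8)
The plan is to compute both families of jump loci directly from explicit (co)chain complexes, exploiting the monomial (multigraded) nature of the toric complex, and to reduce the homology on each coordinate stratum to the reduced homology of a link via a Hochster-type splitting. I would treat the resonance and characteristic cases in parallel, since they rest on the same combinatorial identity.

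First I would record the relevant algebra. Since $T_K\subset T^n$ is a cellular subcomplex whose $k$-cells correspond to the $(k-1)$-simplices of $K$, the cup product is inherited from $H^*(T^n,\C)=E=\bigwedge(e_1,\dots,e_n)$; concretely $A=H^*(T_K,\C)=E/J_K$, where $J_K$ is the monomial ideal generated by the square-free monomials $e_\sigma$ with $\sigma$ a minimal non-face of $K$. Thus $A$ has $\C$-basis $\{e_\sigma : \sigma\in K\}$. Dually, by minimality (Theorem \ref{thm:linaom}) the equivariant chain complex $C_\bullet(T_K^{\ab},\C)$ is the subcomplex of the Koszul complex $K(t_1-1,\dots,t_n-1)$ over $\Lambda=\C[\Z^n]$ spanned by the cells $e_\sigma$, $\sigma\in K$, with boundary $e_\sigma\mapsto\sum_{v\in\sigma}\pm(t_v-1)\,e_{\sigma\setminus v}$.

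Next I would exhibit the coordinate-stratum decomposition. Fix a character $\rho=(t_1,\dots,t_n)$ and let $\sW=\{v : t_v\ne 1\}$ be its support, so that $\rho$ lies in the open stratum of $(\C^{\times})^{\sW}$. Evaluating the boundary at $\rho$ removes only vertices $v\in\sW$; hence the complementary part $\sigma'=\sigma\cap(\sV\setminus\sW)$ of each face is preserved, and the evaluated complex splits as a direct sum over $\sigma'\in K_{\sV\setminus\sW}$. For each such $\sigma'$ the corresponding summand is the Koszul-weighted chain complex on the faces $\tau\subseteq\sW$ with $\tau\cup\sigma'\in K$, i.e.\ on $\lk_{K_\sW}(\sigma')$. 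Since every weight $t_v-1$ with $v\in\sW$ is nonzero, rescaling the basis identifies this summand, up to the shift by $\abs{\sigma'}$, with the reduced simplicial chain complex of $\lk_{K_\sW}(\sigma')$. Reading off homology in degree $i$, where a face $\tau$ of simplicial dimension $\abs{\tau}-1$ sits in homological degree $\abs{\tau}+\abs{\sigma'}$, yields
\[
H_i(T_K,\L_\rho)\;\cong\;\bigoplus_{\sigma'\in K_{\sV\setminus\sW}}\widetilde{H}_{\,i-1-\abs{\sigma'}}\bigl(\lk_{K_\sW}(\sigma'),\C\bigr).
\]
In particular the homology depends only on the support $\sW$, so each $\VV^i_d(T_K)$ is a union of coordinate subtori; imposing $\dim_\C H_i\ge d$ reproduces exactly the stated summation condition. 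The resonance formula follows from the identical bookkeeping applied to the Aomoto complex: by Lemma \ref{lem:two aom}, $(A,\cdot a)$ is the evaluation of the universal Aomoto complex, its differential—left multiplication by $a=\sum_v a_v e_v$—adds only vertices $v$ in the support $\sW=\{v:a_v\ne 0\}$, and the same splitting identifies the cohomology on $\C^{\sW}$ with $\bigoplus_{\sigma'}\widetilde{H}^{\,i-1-\abs{\sigma'}}(\lk_{K_\sW}(\sigma'),\C)$. As these reduced Betti numbers agree over $\C$, the two index sets coincide, giving $\RR^i_d(T_K)=\bigcup_\sW\C^{\sW}$ with the same condition.

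The main obstacle is the Hochster-type step: arranging the direct-sum splitting so that the $\sW$-directions carry the (acyclic, hence augmented) Koszul factor while the complementary directions merely index the summands, and, above all, pinning down the homological degree shift $i-1-\abs{\sigma'}$ together with the fact that the relevant complex is the \emph{reduced} chain complex of $\lk_{K_\sW}(\sigma')$ rather than of the full subcomplex $K_\sW$. A secondary point requiring care is checking that non-generic characters on a coordinate subtorus (where some $t_v=1$ for $v\in\sW$) do not enlarge the jump locus; this follows because the same analysis applies with $\sW$ replaced by the actual support, so that the union $\bigcup_\sW (\C^{\times})^{\sW}$ over the qualifying subsets is already closed and exhausts $\VV^i_d(T_K)$.
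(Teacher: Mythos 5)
This theorem is quoted in the paper from \cite{PS-adv} without proof, and your argument reconstructs essentially the proof given in that reference: identify the equivariant chain complex of $T_K^{\ab}$ with the $K$-indexed subcomplex of the Koszul complex (and the Aomoto complex with its analogue over the exterior Stanley--Reisner ring), evaluate at a character $\rho$ (resp.\ a class $a$), and split along the faces $\sigma \in K_{\sV\setminus\sW}$ into rescaled, shifted reduced (co)chain complexes of the links $\lk_{K_\sW}(\sigma)$; the splitting, the degree shift $i-1-\abs{\sigma}$, and the resulting dimension formula for $H_i(T_K,\L_\rho)$ and $H^i(A,\cdot a)$ are all correct. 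The one point to tighten is your closing remark: to get $(\C^{\times})^{\sW}\subseteq \VV^i_d(T_K)$ (and $\C^{\sW}\subseteq \RR^i_d(T_K)$) for \emph{every} qualifying $\sW$, what you actually need is that the jump loci themselves are Zariski closed (they are determinantal loci, cf.\ Lemma \ref{lem:res ideal}), so that containing the dense stratum $\{\rho : \supp \rho = \sW\}$ forces containing its closure, the full subtorus; the closedness of the union $\bigcup_{\sW} (\C^{\times})^{\sW}$, which is what you invoke, does not by itself rule out that a point of a qualifying subtorus with strictly smaller support fails the jump condition.
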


In the above, $K_\sW$ denotes the simplicial subcomplex 
induced by $K$ on $\sW$, and $\lk_{L}(\sigma)$ denotes 
the link of a simplex $\sigma$ in a subcomplex $L\subseteq K$.

In homological degree $1$, the resonance formula from 
Theorem \ref{thm:cjl tc} takes a simpler form, 
already noted in \cite{PS-mathann}.  Namely, 
$\RR^1(G_{\G}) = \bigcup_{\sW}  \C^{\sW}$, 
where the union is taken over all (maximal) subsets 
$\sW\subset\sV$ for which the induced graph
$\G_{\sW}$ is disconnected.  In particular, 
the codimension of the resonance variety 
$\RR^1(G_{\G})$ equals the connectivity 
of the graph $\G$. 

\begin{corollary}
\label{cor:straight toric}
All toric complexes $T_K$ are straight spaces.
\end{corollary}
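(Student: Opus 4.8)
The plan is to verify directly the three straightness conditions \eqref{v1}, \eqref{v2}, \eqref{v3} from Definition \ref{def:straight} in every degree $i$, relying entirely on the explicit descriptions of the jump loci furnished by Theorem \ref{thm:cjl tc}. The decisive structural feature of a toric complex is that both families of jump loci are assembled from the very same combinatorial data: the characteristic variety $\VV^i_d(T_K)$ is a union of coordinate subtori $(\C^{\times})^{\sW}$, while the resonance variety $\RR^i_d(T_K)$ is the union of the matching coordinate subspaces $\C^{\sW}$, the index set in both cases being the collection of subsets $\sW\subseteq \sV$ obeying the same homological inequality on links of $K$. This parallelism makes the verification almost mechanical.

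First I would record that $\WW^i(T_K)=\bigcup_{j\le i}\VV^j_1(T_K)$ is a finite union of coordinate subtori, each of which contains the identity $1$ (being the exponential image of a coordinate subspace, every such subtorus lies in the identity component $\wG^0$, so $\WW^j_1=\VV^j_1$ here). This at once settles two of the three conditions. Condition \eqref{v1} holds because each coordinate subtorus $(\C^{\times})^{\sW}$ is an algebraic subtorus. Condition \eqref{v3} holds vacuously: since every irreducible component of $\WW^i(T_K)$ passes through the origin, there are no off-origin components to constrain.

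The remaining task is condition \eqref{v2}, the tangent cone equality $\TC_1(\WW^i(T_K))=\RR^i(T_K)$. Here I would invoke two facts from \S\ref{sec:tcones vars}: that $\TC_1$ commutes with finite unions, and that for an algebraic subtorus the tangent cone coincides with the tangent space at $1$. The latter gives $\TC_1((\C^{\times})^{\sW})=\C^{\sW}$ for each coordinate subtorus, and applying the former termwise yields
\[
\TC_1(\WW^i(T_K)) = \bigcup_{\sW}\TC_1\big((\C^{\times})^{\sW}\big)
= \bigcup_{\sW}\C^{\sW} = \RR^i(T_K),
\]
where both unions run over the identical index set (taken over all $j\le i$) prescribed by Theorem \ref{thm:cjl tc}. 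Since \eqref{v1}, \eqref{v2}, \eqref{v3} hold for every $i$, the complex $T_K$ is $k$-straight for all $k$, hence straight.

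There is no genuine obstacle in this argument; all the real content is front-loaded into Theorem \ref{thm:cjl tc}. The one point requiring a moment's care is confirming that the combinatorial index sets defining $\WW^i$ and $\RR^i$ coincide degree by degree—which is exactly what the shared condition $\sum_{\sigma}\dim_{\C}\widetilde{H}_{i-1-\abs{\sigma}}(\lk_{K_\sW}(\sigma),\C)\ge d$ guarantees—so that the tangent cone operation carries the subtorus decomposition onto the subspace decomposition with neither spurious nor missing components.
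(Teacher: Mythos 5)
Your proposal is correct and follows essentially the same route as the paper: both arguments rest entirely on Theorem \ref{thm:cjl tc}, observing that the characteristic varieties are unions of coordinate subtori (so conditions \eqref{v1} and \eqref{v3} are immediate) and that the tangent cone at $1$, which commutes with finite unions and sends each subtorus $(\C^{\times})^{\sW}$ to $\C^{\sW}$, carries this decomposition exactly onto the matching coordinate-subspace decomposition of $\RR^i(T_K)$, giving \eqref{v2}. The paper states this more tersely; you have merely made explicit the bookkeeping (identical index sets, $\TC_1$ of a subtorus equals its tangent space) that the paper leaves implicit.
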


\begin{proof}
By the above theorem, each resonance variety 
$\RR^i(T_K)$ is the union of a coordinate subspace arrangement, 
and each characteristic variety $\VV^i(T_K)$ is the union of the 
corresponding arrangement of coordinate subtori.  Consequently, 
all components of $\VV^i(T_K)$ are algebraic subtori, 
and $\TC_1(\VV^i(T_K))= \RR^i(T_K)$. 
\end{proof}

\subsection{$\Omega$-invariants}
\label{subsec:toric omega}

In their landmark paper \cite{BB}, Bestvina and Brady studied 
the geometric finiteness properties of certain subgroups of 
right-angled Artin groups $G_{\G}$, arising as kernels of 
``diagonal" homomorphisms $G_{\G}\surj \Z$.  In \cite{PS-adv}, 
Papadima and Suciu computed the homology of such 
subgroups, and, more generally, the homology of 
regular $\Z$-covers of toric complexes.  In a related vein, 
Denham \cite{De} investigated the homology of covers 
of toric complexes $T_K$ corresponding to 
``coordinate" homomorphisms $\pi_1(T_K)\surj \Z^r$. 

The study of homological finiteness properties of regular, 
free abelian covers of toric complexes was pursued in \cite{PS-plms},   
where a general formula for the Dwyer--Fried sets of 
such complexes was given.  In our setting, this result 
may be restated as follows.

\begin{theorem}[\cite{PS-plms}]
\label{thm:df toric}
Let $T_K$ be a toric complex. Then 
\[
\Omega^k_r(T_K) = \sigma_r(\RR^k(T_K,\Q))^{\compl}, 
\]
for all $k, r\ge 1$. In particular, $\Omega^k_1(T_K) =  
\overline\RR^k(T_K,\Q)^{\compl}$.
\end{theorem}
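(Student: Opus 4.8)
The plan is to obtain this theorem as an immediate consequence of the straightness machinery developed above, since essentially all of the substantive work has already been carried out. First I would invoke Corollary \ref{cor:straight toric}, which guarantees that every toric complex $T_K$ is a straight space, and hence $k$-straight for each $k\ge 1$. This furnishes precisely the hypothesis needed to feed into the main equality of the previous section.

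Next I would apply Theorem \ref{thm:straight omega} with $X=T_K$. Since $T_K$ is $k$-straight for all $k$, that theorem yields, for all $k,r\ge 1$,
\[
\Omega^k_r(T_K)=\Grass_r(H^1(T_K,\Q))\setminus\sigma_r(\RR^k(T_K,\Q))=\sigma_r(\RR^k(T_K,\Q))^{\compl},
\]
which is exactly the asserted identity. In effect, the content of the theorem is repackaged from the general straight case: by Theorem \ref{thm:alt straight}, the characteristic variety $\WW^k(T_K)$ is a union of subtori $\exp(L\otimes\C)$ together with finitely many isolated points not passing through $1$, and these isolated points do not affect the intersection count governing $\Omega^k_r$.

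For the final assertion, I would specialize to $r=1$. Here $\Grass_1(H^1(T_K,\Q))$ is the projective space $\bP(H^1(T_K,\Q))$, and for any homogeneous variety $V$ a $1$-plane $P$ meets $V$ nontrivially exactly when $P\subseteq V$, since $V$ is a cone. Thus $\sigma_1(V)$ coincides with the projectivization $\overline{V}$; in particular $\sigma_1(\RR^k(T_K,\Q))=\overline{\RR}^k(T_K,\Q)$, and taking complements gives $\Omega^k_1(T_K)=\overline{\RR}^k(T_K,\Q)^{\compl}$.

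Because both inputs are already in hand, there is no genuine obstacle remaining in this particular proof; the one point requiring care is that straightness must hold for $T_K$ in every degree, which is why I would lean on Corollary \ref{cor:straight toric} rather than re-derive it. Had the straight equality of Theorem \ref{thm:straight omega} not been available, the hard part would instead have been controlling the translated (non-origin) components of $\WW^k(T_K)$; but Theorem \ref{thm:cjl tc} shows these varieties are unions of coordinate subtori through $1$, so condition \eqref{v3} is automatic and no such difficulty arises.
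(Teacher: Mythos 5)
Your proposal is correct and follows exactly the paper's own proof: cite Corollary \ref{cor:straight toric} for straightness of $T_K$ in every degree, then apply Theorem \ref{thm:straight omega}. The additional remarks on the $r=1$ specialization and on condition \eqref{v3} being automatic (via Theorem \ref{thm:cjl tc}, since all components of $\WW^k(T_K)$ are coordinate subtori through $1$) are accurate elaborations of the same argument.
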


\begin{proof}
Follows from Theorem \ref{thm:straight omega} and 
Corollary \ref{cor:straight toric}. 
\end{proof}

As a consequence, all the $\Omega$-invariants of a toric complex $T_K$ 
are Zariski open subsets of the Grassmannian $\Grass_r(\Q^n)$.

When combined with Theorem \ref{thm:cjl tc} and the discussion 
following it,  Theorem \ref{thm:df toric} allows 
us to compute very explicitly the Dwyer--Fried sets of toric 
complexes.  Let us provide one such computation. 

\begin{corollary}
\label{cor:df graph}
Let $\G$ be a finite simple graph, and let $\kappa$ be 
the connectivity of $\G$.  Then $\Omega^1_r(G_{\G})=\emptyset$,
for all $r\ge \kappa+1$. 
\end{corollary}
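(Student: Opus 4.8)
The plan is to combine the explicit Dwyer--Fried formula for toric complexes with a single transversality count in the Grassmannian. Taking as classifying space the toric complex $T_{\Delta}$, where $\Delta$ is the flag complex of $\G$, and writing $n$ for the number of vertices of $\G$ (so that $H^1(G_{\G},\Q)=\Q^n$), Theorem \ref{thm:df toric} applies verbatim and gives
\[
\Omega^1_r(G_{\G}) = \sigma_r(\RR^1(G_{\G},\Q))^{\compl}.
\]
Thus the corollary is equivalent to the claim that $\sigma_r(\RR^1(G_{\G},\Q)) = \Grass_r(\Q^n)$ whenever $r \ge \kappa+1$; in words, that once $r$ exceeds the connectivity, every rational $r$-plane must meet the resonance variety nontrivially.

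First I would locate a component of $\RR^1$ of the smallest possible codimension. By the description of $\RR^1(G_{\G})$ following Theorem \ref{thm:cjl tc}, this variety is the union of the coordinate subspaces $\C^{\sW}$ taken over the maximal $\sW\subset\sV$ for which the induced graph $\G_{\sW}$ is disconnected, and its codimension equals $\kappa$. Concretely, choosing a minimum vertex cut $S$ of $\G$, of size $\kappa$, and setting $\sW=\sV\setminus S$ yields a component $L:=\C^{\sW}$ of $\RR^1(G_{\G})$ of dimension $n-\kappa$; being a coordinate subspace, $L$ is rationally defined, so $L\subseteq\RR^1(G_{\G},\Q)$.

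It then remains to run the dimension count on this single linear component. Any $r$-plane $P\subset\Q^n$ satisfies $\dim(P\cap L)\ge r+(n-\kappa)-n = r-\kappa$, which is at least $1$ once $r\ge\kappa+1$; hence $P\cap L\ne\{0\}$, so $P$ meets $\RR^1(G_{\G},\Q)$. Equivalently, the Schubert codimension formula of \S\ref{subsec:incidence} assigns $\sigma_r(L)$ the codimension $\kappa-r+1\le 0$, which means $\sigma_r(L)$ fills the whole Grassmannian. Since $L\subseteq\RR^1(G_{\G},\Q)$ forces $\sigma_r(L)\subseteq\sigma_r(\RR^1(G_{\G},\Q))$, we obtain $\sigma_r(\RR^1(G_{\G},\Q))=\Grass_r(\Q^n)$, and therefore $\Omega^1_r(G_{\G})=\emptyset$, as desired. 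No genuine difficulty arises here; the only points demanding care are the passage from ``connectivity equals the codimension of $\RR^1$'' to the existence of an honest component of codimension exactly $\kappa$ (so that the count uses the largest, rather than an arbitrary, piece of $\RR^1$), together with reading the boundary case $r=\kappa+1$ of the Schubert formula correctly, where the nominal codimension drops to $0$ and $\sigma_r(L)$ must be interpreted as the entire Grassmannian.
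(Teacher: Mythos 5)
Your proof is correct and takes essentially the same route as the paper's: the paper likewise combines Theorem \ref{thm:df toric} with the fact that $\codim \RR^1(G_{\G})=\kappa$, concluding via the Schubert codimension formula $\codim \sigma_r(\RR^1(G_{\G})) = \kappa-r+1 \le 0$ for $r\ge \kappa+1$. Your only additions are to make the codimension fact explicit by exhibiting the component $\C^{\sV\setminus S}$ attached to a minimum vertex cut $S$, and to replace the citation of the Schubert codimension formula by the direct linear-algebra count $\dim(P\cap L)\ge r-\kappa\ge 1$, which is just an unwinding of the same argument.
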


\begin{proof}
Recall that $\codim \RR^1(G_{\G})=\kappa$. 
Thus, $\codim \sigma_r(\RR^1(G_{\G})) =\kappa-r+1$. 
The desired conclusion follows from Theorem \ref{thm:df toric}.
\end{proof}

In particular, if $\G$ is disconnected, then 
$\Omega^1_r(G_{\G})=\emptyset$, for all $r\ge 1$.

\begin{example}
\label{ex:tree}
Let $\G$ be a tree on $n\ge 3$ vertices. Label the 
non-terminal vertices as $v_1,\dots , v_s$, and the 
terminal vertices as $v_{s+1},\dots ,v_n$.  The cut 
sets of $\G$ are all singletons, consisting of the  
non-terminal vertices.  Thus, the resonance variety 
$\RR^1(G_\G,\Q)$ is the union of the coordinate 
hyperplanes $L_j=\{z\in \Q^n\mid z_j=0\}$, with $1\le j\le s$.  Hence,
\begin{equation}
\label{eq:omega tree}
\Omega^1_r(G_{\G})=
\begin{cases}
\QP^{n-1} \setminus \bigcup_{j=1}^{s} \overline{L}_j 
& \text{if $r=1$},\\
\emptyset & \text{if $r\ge 2$}.
\end{cases}
\end{equation}
\end{example}

\section{K\"{a}hler and quasi-K\"{a}hler manifolds}
\label{sec:kahler}

We now discuss the cohomology 
jumping loci and the Dwyer--Fried invariants of 
K\"{a}hler and quasi-K\"{a}hler manifolds. 

\subsection{Cohomology ring and formality}
\label{subsec:coho kahler}

Let $M$ be a compact, connected, complex manifold 
of complex dimension $m$.  Such a manifold is called 
a {\em K\"{a}hler manifold}\/ if it admits a Hermitian 
metric $h$ for which the imaginary part $\omega=\Im(h)$ 
is a closed $2$-form.   The class of K\"{a}hler manifolds, 
includes smooth, complex projective varieties, such 
as Riemann surfaces.  This  class is closed under 
finite direct products and finite covers. 

Hodge theory provides two sets of data on the cohomology 
ring of $M$. The first data, known as the Hodge decomposition 
on $H^i (M,\C)$, depend only on the complex structure on $M$.  
The second data, known as the Lefschetz isomorphism 
and the Lefschetz decomposition on $H^i (M,\R)$, 
depend only on the choice of a K\"{a}hler class 
$[\omega]\in  H^{1,1}(M,\C)$. 

These data impose strong conditions on the possible 
Betti numbers $b_i=b_i(M)$, beyond the 
symmetry property $b_i=b_{2m-i}$ imposed by 
Poincar\'{e} duality.   
For example, the odd Betti numbers $b_{2i+1}$ 
must be even, and increasing in the range 
$2i+1\le m$, while the even Betti numbers 
$b_{2i}$ must be increasing in the range $2i\le m$.  

Another constraint on the topology of compact 
K\"{a}hler manifolds was established by Deligne, Griffiths, 
Morgan, and Sullivan in \cite{DGMS}.   For such a 
manifold $M$, let $d$ be the exterior derivative, 
$J$ the complex structure, and $d^c=J^{-1}dJ$. Then the 
following holds: If $\eta$ is a form which is closed for 
both $d$ and $d^c$, and exact for either $d$ or $d^c$, 
then $\eta$ is exact  for $dd^c$.  As a consequence, 
all compact K\"{a}hler manifolds are formal. 

A manifold $X$ is said to be a {\em quasi-K\"{a}hler manifold}\/ 
if there is a compact K\"{a}hler manifold $\overline{X}$ and a 
normal-crossings divisor $D$ such that $X=\overline{X}\setminus D$. 
The class of quasi-K\"{a}hler manifolds includes 
smooth, irreducible, quasi-projective complex varieties, 
such as complements of plane algebraic curves. 

Each quasi-K\"{a}hler manifold $X$ inherits a mixed 
Hodge structure from its compactification $\overline{X}$.  
If $X$ is a smooth, quasi-projective variety 
with vanishing degree $1$ weight filtration on $H^1(X,\C)$, 
then $X$ is $1$-formal.  This happens, for instance, 
when $X$ admits a non-singular compactification 
$\overline{X}$ with $b_1(\overline{X})=0$, e.g.,  
when $X$ is the complement of a hypersurface 
in $\CP^{m}$.  In general, though, smooth, 
quasi-projective varieties need not be $1$-formal.  
For a detailed treatment of the subject, we refer 
to Morgan \cite{Mo}. 

\subsection{Characteristic varieties}
\label{subsec:cv kahler}

Foundational results on the structure of the cohomology 
support loci for local systems on smooth projective varieties, 
and more generally, on compact K\"{a}hler manifolds were 
obtained by  Beauville, Green--Lazarsfeld, Simpson, and 
Campana.   A further, wide-ranging generalization was 
obtained by Arapura in \cite{Ar}.  
 
\begin{theorem}[\cite{Ar}]
\label{thm:arapura}
Let $X=\overline{X}\setminus D$, where $\overline{X}$ 
is a compact K\"{a}hler manifold and $D$ is a normal-crossings 
divisor. If either $D=\emptyset$ or $b_1(\overline{X})=0$, 
then each characteristic variety $\VV^i_d(X)$ 
is a finite union of unitary translates of algebraic 
subtori of $H^1(X,\C^{\times})$. 
\end{theorem}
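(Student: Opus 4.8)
The plan is to reduce the statement to Hodge-theoretic control on twisted cohomology and then invoke a Castelnuovo--de Franchis type structure theorem, together with Simpson's Galois-conjugation argument, to pin down the translated components. First I would fix the identification $H^1(X,\C^{\times})=\Hom(\pi_1(X),\C^{\times})$ and treat the two hypotheses separately. When $D=\emptyset$, the space $X=\overline{X}$ is compact K\"{a}hler and carries a pure Hodge structure on $H^1$; when $b_1(\overline{X})=0$, Deligne's mixed Hodge theory applied to $X\inj\overline{X}$ yields a splitting $H^1(X,\C)=H^{1,0}\oplus\overline{H^{1,0}}$, so that in both cases the character torus is governed by the space of holomorphic $1$-forms. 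The key analytic input is that for a \emph{unitary} character $\rho$ the twisted cohomology $H^*(X,\L_\rho)$ carries a Hodge decomposition, coming from harmonic theory for flat unitary bundles, so that the jumping conditions defining $\VV^i_d(X)$ can be read off from the Dolbeault complex.

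Next I would analyze the infinitesimal structure of $\VV^i_d(X)$ via the Green--Lazarsfeld derivative complex: near a unitary point $\rho$, the tangent space to the jump locus is cut out by the cup-product maps $H^{p}(X,\L_\rho)\to H^{p+1}(X,\L_\rho)$ given by wedging with the holomorphic and antiholomorphic $1$-forms. This exhibits $\VV^i_d(X)$ locally as a union of linear pieces in the unitary directions, so that after exponentiating the germ at $\rho$ is that of a translated subtorus. For the components passing through the trivial character $1$ this recovers the inclusion of Theorem \ref{thm:lib}, and the Hodge-theoretic computation upgrades it to an equality with $\RR^i_d(X)$, forcing those components to be honest subtori.

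The globalization step carries the real content. Here I would run the classical Castelnuovo--de Franchis argument: a two-dimensional isotropic subspace of $H^0(X,\Omega^1_X)$ for the wedge product produces a holomorphic map $f\colon X\to C$ onto a curve---or, in the presence of the divisor $D$ or of multiplicities, onto an orbifold curve---of positive genus. One then shows that every positive-dimensional component of $\VV^i_d(X)$ is pulled back along such an admissible map, i.e.\ has the form $\rho\cdot f^{*}H^1(C,\C^{\times})$ for a suitable character $\rho$; the finiteness of the collection of such maps accounts for the finiteness of the decomposition.

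The hard part, and the main obstacle, is to prove that the translating character $\rho$ is \emph{unitary}---in fact of finite order. For this I would appeal to Simpson's theorem that the cohomology support loci are defined over $\overline{\Q}$ and are permuted by the action of $\mathrm{Gal}(\overline{\Q}/\Q)$ on torsion-translated subtori: a component $\rho T$ and all of its Galois conjugates $\rho^{\sigma}T$ must again be components, and since $\VV^i_d(X)$ has only finitely many components the character $\rho$ has only finitely many conjugates, hence is a root of unity. Combined with the structure theorem, this gives that each $\VV^i_d(X)$ is a finite union of unitary (indeed torsion) translates of subtori. The case $b_1(\overline{X})=0$ requires the extra bookkeeping of propagating these arguments across the normal-crossings divisor $D$, via Deligne's mixed Hodge theory and the theory of admissible maps to orbifold curves---precisely the generality achieved in Arapura's \cite{Ar}.
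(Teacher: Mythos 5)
First, a framing remark: the paper does not prove Theorem \ref{thm:arapura} at all --- it is quoted from Arapura \cite{Ar} as a black box. So your proposal can only be measured against the argument in the literature, and there it has two genuine gaps.

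The first gap is in your globalization step. The Hodge-theoretic local analysis you begin with (harmonic theory for flat unitary bundles, the Green--Lazarsfeld derivative complex at a unitary character) is indeed the right starting point, but the Castelnuovo--de Franchis mechanism you then invoke only governs the \emph{degree-one, depth-one} locus $\VV^1_1(X)$. Your claim that ``every positive-dimensional component of $\VV^i_d(X)$ is pulled back along an admissible map to an orbifold curve'' is false for $i\ge 2$: by the product formula (Proposition \ref{prop:cv pw}), for Riemann surfaces $S_g, S_h$ with $g,h\ge 2$ one has $\WW^2(S_g\times S_h)=H^1(S_g\times S_h,\C^{\times})^0$, the full character torus. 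This is a subtorus, consistent with the theorem, but it cannot be a translate of $f^{*}H^1(C,\C^{\times})$ for any holomorphic map $f$ to a curve, since all cup products of classes pulled back from a curve land in the line $f^*H^2(C,\C)$, whereas products of degree-one classes on $S_g\times S_h$ span a space of dimension much larger than one. Arapura's proof of the all-degrees statement does not go through curves at all: it identifies the jump loci in Dolbeault-type coordinates (unitary character, holomorphic $1$-form), shows they are invariant under scaling the form, and combines this with algebraicity of the loci to force the translated-subtorus structure.

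The second gap is the arithmetic step you flag as the ``hard part.'' The inference ``finitely many Galois conjugates $\Rightarrow$ root of unity'' is invalid: the character of $\Z$ sending the generator to $2$ is fixed by $\mathrm{Gal}(\overline{\Q}/\Q)$ yet has infinite order. Kronecker's theorem needs all conjugates to lie on the unit circle, which is exactly the unitarity you are trying to establish, so the argument is circular; Simpson's actual proof supplements the Galois action with a transcendence-type statement comparing the $\Q$-structures on the Betti and de Rham moduli. More fundamentally, Simpson's theorem is proved for smooth \emph{projective} varieties, while Theorem \ref{thm:arapura} covers compact K\"{a}hler manifolds that need not be projective, and quasi-K\"{a}hler $X$ with $b_1(\overline{X})=0$; in that generality no such arithmetic argument was available (the torsion property for compact K\"{a}hler manifolds was established only much later, by B.~Wang). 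This is precisely why the theorem asserts \emph{unitary} rather than torsion translates: the unitarity falls out of the Hodge-theoretic description itself --- the rank-one harmonic theory canonically splits a character into its unitary part and a holomorphic $1$-form --- not out of Galois theory.
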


In other words, for each $i\ge 0$ and $d>0$, the 
characteristic variety $\VV^i_d(X)$ admits a 
decomposition into irreducible components of the form
$W= \rho \cdot T$, with
\begin{itemize}
\item 
direction subtorus $T=\dire( W)$, a connected algebraic 
subgroup of the character torus $\wG^0$, where $G=\pi_1(X)$;
\item  
translation factor $\rho \colon G \to S^1$, a unitary 
character of $G$. 
\end{itemize}

In degree $1$ and depth $1$, the condition that 
$b_1(\overline{X})=0$ if $D\ne \emptyset$ may be lifted.  
Furthermore, each positive-dimensional component 
of $\VV^1_1(X)$ is of the form $\rho \cdot T$, 
with $\rho$ a {\em torsion}\/ character. 

In the  quasi-projective setting, more can be said. 
The next theorem summarizes several recent results 
in this direction: the first two parts are from \cite{DPS-imrn}, 
the third part is from \cite{Bu} and \cite{ACM}, 
while the last part is from \cite{Di07} and \cite{ACM}.

\begin{theorem}[\cite{Di07, DPS-imrn, Bu, ACM}]
\label{thm:qp cv}
Let $X$ be a smooth, quasi-projective variety.  Then:

\begin{enumerate}
\item \label{a1}
If $W$ and $W'$ are two distinct components of 
$\VV^1(X)$, then either $\dire (W) = \dire (W')$, or 
$T_1 \dire (W) \cap T_1 \dire (W')= \{ 0\}$. 

\item \label{a2}
For each pair of distinct components, $W$ and $W'$, the 
intersection $W\cap W'$ is a finite set of torsion characters. 

\item \label{a3}  
The isolated points in $\VV^1(X)$ are also torsion 
characters.

\item \label{a4}  
If $W=\rho T$, with $\dim T=1$ and $\rho\ne 1$, 
then $T$ is not a component of $\VV^1(X)$.
\end{enumerate}
\end{theorem}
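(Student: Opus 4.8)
The plan is to deduce all four statements from Arapura's structure theorem (Theorem~\ref{thm:arapura}) and the degree-one refinement recorded just below it, which together say that every positive-dimensional component $W$ of $\VV^1(X)$ is a torsion translate $W=\rho\,T$ of its direction subtorus $T=\dire(W)$. The geometric engine behind the finer statements is the dictionary between such components and orbifold pencils: each positive-dimensional $W$ is cut out by an admissible map $f_W\colon X\to C_W$ onto a curve of negative Euler characteristic carrying an orbifold structure, the direction $T$ being governed by $H^1$ of the base, so that $T_1\dire(W)=\im\big(f_W^*\colon H^1(C_W,\C)\to H^1(X,\C)\big)$, while the translation factor $\rho$ is governed by the multiple-fiber data of $C_W$.

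For part~\eqref{a1} I would compare the pencils attached to two distinct components $W$ and $W'$. If they arise from the same pencil, then $\dire(W)=\dire(W')$. Otherwise the pencils are genuinely distinct, and the (orbifold) Castelnuovo--de Franchis theorem forces their pullback maps to have linearly independent images: a common nonzero class in $\im f_W^*\cap\im f_{W'}^*$ would exhibit the two pencils as equivalent. Since these images are exactly the tangent spaces $T_1\dire(W)$ and $T_1\dire(W')$, this yields $T_1\dire(W)\cap T_1\dire(W')=\{0\}$.

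For part~\eqref{a2}, granting \eqref{a1}, there are two cases. If $\dire(W)\ne\dire(W')$, the direction tori meet transversally, hence in a finite set, so the translates $W=\rho T$ and $W'=\rho'T'$ meet in a finite set as well; if instead $\dire(W)=\dire(W')$ but $W\ne W'$, the two parallel cosets of the same subtorus are disjoint. The substantive point is torsion: by Simpson's theorem the special points of each component, and in particular every point of a finite transverse intersection, are unitary, while $\VV^1(X)$ is defined over $\overline{\Q}$ and stable under $\mathrm{Gal}(\overline{\Q}/\Q)$. Hence the Galois orbit of such a point again lies in $\VV^1(X)$ and remains unitary; a character with algebraic coordinates all of whose conjugates are unitary is torsion, by Kronecker's theorem. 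The same Kronecker-plus-Galois argument settles part~\eqref{a3}: an isolated point of $\VV^1(X)$ is unitary and its Galois conjugates are again isolated points, hence unitary, so the point is torsion.

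Part~\eqref{a4} is where I expect the real work. I would argue by contradiction, assuming $W=\rho T$ with $\dim T=1$ and $\rho\ne1$ torsion, while $T$ is also a component. Both components then share the direction $T$, so by the pencil dictionary they are produced by one and the same orbifold fibration $f\colon X\to C$, and the question reduces to the twisted cohomology of the hyperbolic orbifold base: $T$ being a component records a jump of $H^1(C,-)$ at the trivial character, while $\rho T$ records a jump at the nontrivial torsion character $\rho|_{C}$. The crux is to show these two jumps cannot coexist for a single base $C$, which I would do by an explicit Euler-characteristic and Riemann--Roch count on orbifold curves weighing the contributions of the multiple fibers. This orbifold bookkeeping is precisely the content of the classification in \cite{Di07,ACM}, and it is the step least likely to follow formally from Arapura's theorem alone.
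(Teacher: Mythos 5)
The paper itself offers no proof of Theorem \ref{thm:qp cv}: it is quoted verbatim from the literature, with parts (1)--(2) attributed to \cite{DPS-imrn}, part (3) to \cite{Bu, ACM}, and part (4) to \cite{Di07, ACM}. So your proposal must be judged on its own merits, and it contains one genuine mathematical gap. The torsion step in parts (2) and (3) rests on the claim that ``a character with algebraic coordinates all of whose conjugates are unitary is torsion, by Kronecker's theorem.'' That is not Kronecker's theorem: it applies to algebraic \emph{integers}. The algebraic number $(3+4i)/5$ has all its conjugates on the unit circle, yet it is not a root of unity; its minimal polynomial $x^2-\tfrac{6}{5}x+1$ is not monic over $\Z$, so Kronecker does not apply. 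Nothing in your argument supplies integrality of the coordinates of the intersection points or of the isolated points --- the characteristic variety is defined over $\Q$, but that does not make the coordinates of its zero-dimensional strata algebraic integers. So unitarity plus Galois stability, by itself, does not yield torsion.

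The gap is repairable for part (2) using material already recorded in the paper: the refinement stated right after Theorem \ref{thm:arapura} says that every positive-dimensional component of $\VV^1_1(X)$ is a translate $\rho\cdot T$ by a \emph{torsion} character $\rho$. Granting this, $\rho T\cap \rho' T'$ lies in the algebraic subgroup $\langle\rho\rangle T\cap\langle\rho'\rangle T'$, whose identity component is $(T\cap T')^0$; when the intersection is finite this subgroup is a finite group, so all of its points are torsion --- no Galois theory needed. For part (3), however, there is no elementary patch: Arapura gives unitarity of isolated points, and Galois conjugation preserves the set of isolated points, but passing from unitary-with-unitary-conjugates to torsion is exactly where the counterexample above blocks the argument; the torsion statement is the substantive content of \cite{Bu} and \cite{ACM} (via polynomial periodicity of Hodge numbers, respectively orbifold pencils), not a formal consequence of Theorem \ref{thm:arapura}. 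Your part (1) sketch (Arapura's pencil dictionary plus the isotropic-subspace/Castelnuovo--de Franchis argument) is indeed the route taken in \cite{DPS-imrn}, and part (4) you openly defer to the literature; for the record, the cleanest argument there is that a $1$-dimensional component through $1$ would correspond under Arapura's correspondence to a curve with $b_1=1$ and negative (orbifold) Euler characteristic, and no such curve exists --- orbifold points on a $b_1=1$ base produce only \emph{translated} components, never the direction torus itself.
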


\subsection{Resonance varieties}
\label{subsec:res kahler}

In the presence of $1$-formality, the quasi-K\"{a}hler 
condition also imposes stringent conditions 
on the degree~$1$ resonance varieties. 

\begin{theorem}[\cite{DPS-duke}]
\label{thm:res quasi-kahler} 
Let $X$ be a $1$-formal, quasi-K\"{a}hler manifold, and let 
$\{ L_{\alpha}\}$ be the collection of positive-dimensional, 
irreducible components of $\RR^1_1(X)$.  Then:
\begin{enumerate}
\item \label{rqk1} Each $L_{\alpha}$ is a linear 
subspace of $H^1(X, \C)$ of dimension at least 
$2\varepsilon(\alpha)+2$, 
for some $\varepsilon(\alpha)\in \{0,1\}$.
\item \label{rqk2} The restriction of the cup-product 
map $H^1(X,\C)\wedge H^1(X,\C) \to H^2(X,\C)$ 
to $L_{\alpha} \wedge L_{\alpha}$ has rank $\varepsilon(\alpha)$.
\item \label{rqk3} If $\alpha \ne \beta $, then 
$L_{\alpha} \cap L_{\beta}=\{0\}$.
\item  \label{rqk4} 
$\RR^1_d(X)=\{0\} \cup \bigcup_{\alpha :  
d\le \dim L_{\alpha} -\varepsilon(\alpha)} 
L_{\alpha}$.  
\end{enumerate}
\end{theorem}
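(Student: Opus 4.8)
The plan is to use $1$-formality to transfer the problem from the resonance variety to the characteristic variety, apply there the structure theory of Arapura (Theorem \ref{thm:arapura}) and its quasi-projective refinements (Theorem \ref{thm:qp cv}), and then read the invariants $\varepsilon(\alpha)$ off the pencils underlying the components. Since $X$ is $1$-formal, Theorem \ref{thm:tcone} gives, for each $d$, an isomorphism of analytic germs $\exp\colon (\RR^1_d(X),0)\isom(\VV^1_d(X),1)$. Hence the positive-dimensional components $L_\alpha$ of $\RR^1_1(X)$ are exactly the tangent spaces $L_\alpha=T_1 T_\alpha$ to the positive-dimensional components $T_\alpha$ of $\VV^1_1(X)$ through~$1$. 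By Theorem \ref{thm:arapura} every component of $\VV^1_1(X)$ is a torsion translate $\rho\,T$ of a subtorus, and such a translate contains $1$ only when $\rho\in T$, i.e.\ when $\rho\,T=T$; so each $T_\alpha$ is an honest subtorus and each $L_\alpha$ is a rationally defined linear subspace. This records the linearity in part \eqref{rqk1}.

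First I would identify the geometric source of each $L_\alpha$. The structure theory behind Theorem \ref{thm:arapura} (Beauville, Green--Lazarsfeld, Simpson, Arapura) shows that every positive-dimensional component of $\VV^1_1(X)$ is pulled back along an orbifold pencil $f_\alpha\colon X\to C_\alpha$ onto a smooth curve, with $L_\alpha=f_\alpha^*H^1(C_\alpha,\C)$. The cup product $H^1(X,\C)\wedge H^1(X,\C)\to H^2(X,\C)$ then restricts on $L_\alpha\wedge L_\alpha$ to a map factoring through $f_\alpha^*\colon H^2(C_\alpha,\C)\to H^2(X,\C)$. I would split according to the compactness of $C_\alpha$: if $C_\alpha$ is compact then $H^2(C_\alpha,\C)=\C$, the factored map has rank~$1$, and I set $\varepsilon(\alpha)=1$; since the base then has genus at least~$2$, this forces $\dim L_\alpha\ge 4$. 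If $C_\alpha$ is non-compact then $H^2(C_\alpha,\C)=0$, the map vanishes, so $\varepsilon(\alpha)=0$, and the classes pulled back from $C_\alpha$ span a subspace of dimension at least~$2$. In both cases $\dim L_\alpha\ge 2\varepsilon(\alpha)+2$ (part \eqref{rqk1}) and the rank equals $\varepsilon(\alpha)$ (part \eqref{rqk2}).

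For the disjointness in part \eqref{rqk3} I would invoke Theorem \ref{thm:qp cv}\eqref{a1}: two distinct components $W_\alpha\ne W_\beta$ of $\VV^1(X)$ either share a direction subtorus or satisfy $T_1\dire(W_\alpha)\cap T_1\dire(W_\beta)=\{0\}$. As $L_\alpha=T_1\dire(W_\alpha)$, distinct subspaces $L_\alpha\ne L_\beta$ cannot arise from a common direction, so $L_\alpha\cap L_\beta=\{0\}$. For the depth stratification in part \eqref{rqk4} I would compute, for each $\alpha$, the generic value of the Aomoto--Betti number $\dim_\C H^1(H^*(X,\C),\cdot a)$ along $L_\alpha$. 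Restricted to $L_\alpha=f_\alpha^*H^1(C_\alpha,\C)$ the Aomoto complex is pulled back from that of the base curve $C_\alpha$, so this generic value equals the top resonance depth of $C_\alpha$, which is read off from Example \ref{ex:res surf} in the compact case and from the analogous free-group computation in the non-compact case. Combining these generic depths with the disjointness from part \eqref{rqk3}, which guarantees that a generic nonzero point of one component lies on no other, then yields the depth formula \eqref{rqk4}.

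The hard part will be the geometric identification used in the second and third paragraphs: promoting the purely torus-theoretic output of Theorem \ref{thm:arapura} to an actual orbifold pencil $f_\alpha\colon X\to C_\alpha$ realizing $L_\alpha=f_\alpha^*H^1(C_\alpha,\C)$, and handling the multiple fibers in the orbifold structure so that the dimension bound, the cup-product rank, and the base-curve resonance depth all come out correctly. This is the isotropic-subspace content of the theorem, where the quasi-K\"{a}hler hypothesis and Hodge theory do the real work; once the pencil description is secured, parts \eqref{rqk1}--\eqref{rqk4} follow from the routine verifications above.
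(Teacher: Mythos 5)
The paper itself contains no proof of Theorem \ref{thm:res quasi-kahler}: it is quoted from \cite{DPS-duke}, so your proposal can only be measured against the argument in that reference. In outline you do follow the same route ($1$-formality and Theorem \ref{thm:tcone} to identify the $L_\alpha$ with tangent spaces at $1$ of subtori of $\VV^1_1(X)$, then Arapura's pencil description of those subtori), but the steps you dismiss as ``routine verifications'' are precisely where $1$-formality and Hodge theory must do the work, and one of them is false as you state it. In part \eqref{rqk2} you claim that a compact base $C_\alpha$ forces rank $1$, because the cup product on $L_\alpha\wedge L_\alpha$ factors through $H^2(C_\alpha,\C)\cong\C$. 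Factoring through a line only gives rank $\le 1$; rank exactly $1$ requires $f_\alpha^*\colon H^2(C_\alpha,\C)\to H^2(X,\C)$ to be nonzero, and that fails for admissible pencils on quasi-K\"{a}hler manifolds in general. The paper's own Example \ref{ex:bundle} is a counterexample: for the $\C^{\times}$-bundle $X$ over $S_g$ ($g\ge 2$) with Euler number $-1$, the projection $f\colon X\to S_g$ is an admissible pencil onto a compact curve of genus $g\ge 2$, yet the Gysin sequence shows $f^*$ annihilates $H^2(S_g,\C)$, so $f^*H^1(S_g,\C)=H^1(X,\C)$ is a $2g$-dimensional subspace on which the cup product vanishes identically (and it is exactly the tangent space at $1$ of $\VV^1_1(X)=H^1(X,\C^{\times})$). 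The only hypothesis separating this example from the theorem is $1$-formality (Proposition \ref{prop:tc non1f}), which your argument for part \eqref{rqk2} never invokes; in \cite{DPS-duke} this step is obtained by playing the tangent cone formula at higher depth $d$ against Arapura's computation of the generic twisted Betti number ($2g-2$) along a compact-base component.

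The same example breaks your argument for part \eqref{rqk4}: the Aomoto complex of $X$ at $a=f_\alpha^*(a')$ is not ``pulled back from $C_\alpha$''; pullback only gives an injection $H^1(H^*(C_\alpha,\C),\cdot a')\hookrightarrow H^1(H^*(X,\C),\cdot a)$, hence a lower bound on depth, and the upper bound (no resonance beyond what the pencil forces) is again a formality statement, proved in \cite{DPS-duke} via $\RR^1_d(X)=\TC_1(\VV^1_d(X))$ for all $d$ together with the generic Betti count along each subtorus. For part \eqref{rqk3}, Theorem \ref{thm:qp cv}\eqref{a1} is stated only for smooth quasi-projective varieties, whereas the theorem concerns arbitrary $1$-formal quasi-K\"{a}hler manifolds, so disjointness in that generality needs the Hodge-theoretic argument of \cite{DPS-duke} rather than the citation you give. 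Finally, had you carried out your own depth computation, you would have found the generic depth along $L_\alpha$ to be $\dim L_\alpha-1-\varepsilon(\alpha)$ (e.g., $2g-2$ for $S_g$), which yields the condition $d<\dim L_\alpha-\varepsilon(\alpha)$ as in \cite{DPS-duke}, not the inequality ``$d\le \dim L_\alpha-\varepsilon(\alpha)$'' printed in the statement; the transcription here is off by one, as one checks against Example \ref{ex:res surf}, where $\RR^1_{2g-1}(S_g)=\{0\}$.
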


\begin{remark}
\label{rem:qpres}
Suppose $X$ is a smooth quasi-projective variety 
with $W_1(H^1(X,\C))=0$.  
Then, as mentioned in \S\ref{subsec:coho kahler}, $X$ is  
$1$-formal.  In this situation, each subspace $L_{\alpha}$ 
is isotropic, i.e., $\epsilon(\alpha)=0$. 
\end{remark}

\begin{remark}
\label{rem:kres}
Suppose $M$ is a compact K\"{a}hler manifold.   
Then of course $M$ is formal, and Theorem \ref{thm:res quasi-kahler} 
again applies.  In this situation, each subspace $L_{\alpha}$ 
has dimension $2g(\alpha)$, for some $g(\alpha)\ge 2$, and the 
restriction of the cup-product map to $L_{\alpha} \wedge L_{\alpha}$ 
has rank $\epsilon(\alpha)=1$. 
\end{remark}

It is now a straightforward exercise to enumerate 
the possibilities for the first resonance variety of a 
K\"{a}hler manifold $M$, at least for low values of $n=b_1(M)$:
\begin{itemize}
\item If $n=0$ or $2$, then $\RR^1(M)=\{0\}$.
\item If $n=4$, then $\RR^1(M)=\{0\}$ or $\C^4$.
\item If $n=6$, then $\RR^1(M)=\{0\}$, $\C^4$,  or 
$\C^6$.
\item If $n=8$, then $\RR^1(M)=\{0\}$, $\C^4$, 
$\C^6$, $\C^8$, or $\RR^1(M)$ consists of two transverse, 
$4$-dimensional subspaces.
\end{itemize}

Using the computations from Examples \ref{ex:res torus} 
and \ref{ex:prod surf}, it is readily seen 
that all these possibilities can be realized by manifolds 
of the form $M=S_g$ or $M=S_g\times S_h$, with $S_g$ 
and $S_h$ Riemann surfaces of suitable genera $g,h\ge 0$. 

\subsection{Straightness} 
\label{subsec:straight kahler}
We now discuss in detail the straightness properties of 
K\"{a}hler and quasi-K\"{a}hler manifolds, and how these  
properties relate to $1$-formality.

\begin{prop}
\label{prop:straight kahler}
Let $X$ be a $1$-formal, quasi-K\"{a}hler manifold 
(for instance, a compact K\"{a}hler manifold).  Then:
\begin{enumerate}
\item \label{sk1}
$X$ is locally $1$-straight.
\item \label{sk2}
$X$ is $1$-straight if and only if $\WW^1(X)$ contains no 
positive-dimensional translated subtori.
\end{enumerate} 
\end{prop}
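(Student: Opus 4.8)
The plan is to derive both parts from results already on the table, with the structure theory of quasi-K\"ahler jump loci doing the real work only in part \eqref{sk2}. For part \eqref{sk1} I would argue in a single line: $X$ is $1$-formal by hypothesis, so Corollary \ref{cor:formal-ls} applies verbatim and yields that $X$ is locally $1$-straight. It is worth emphasizing that the quasi-K\"ahler hypothesis is not needed here; the only input is the tangent cone formula of Theorem \ref{thm:tcone}, which already supplies straightness conditions \eqref{v1} and \eqref{v2} in degree $i=1$.

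For part \eqref{sk2}, the first step is to isolate what actually remains to be checked. By part \eqref{sk1}, conditions \eqref{v1} and \eqref{v2} hold in degree $1$, so by Definition \ref{def:straight} the space $X$ is $1$-straight if and only if condition \eqref{v3} holds for $i=1$; that is, if and only if every irreducible component of $\WW^1(X)$ not passing through $1$ is $0$-dimensional. Thus the whole content of \eqref{sk2} is to identify a failure of \eqref{v3} with the existence of a positive-dimensional translated subtorus.

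To carry out this identification I would invoke Arapura's Theorem \ref{thm:arapura} together with the discussion following it (which, in degree and depth $1$, removes the auxiliary hypothesis on $b_1(\overline{X})$): every positive-dimensional component of $\VV^1(X)$ is a torsion translate $\rho\, T$ of a direction subtorus $T=\dire(W)$, which by definition is connected and hence lies in $\wG^0$. Passing from $\VV^1(X)$ to $\WW^1(X)=\VV^1(X)\cap \wG^0$ is then harmless: since $T\subseteq \wG^0$, the coset $\rho\, T$ either lies entirely in $\wG^0$ (when $\rho\in\wG^0$) or misses it altogether, so intersecting with the identity component creates no new components and changes no dimensions. Consequently every positive-dimensional component of $\WW^1(X)$ is a (possibly translated) subtorus; those through $1$ are honest subtori by \eqref{v1}, while those off the origin are exactly the positive-dimensional translated subtori. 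Condition \eqref{v3} in degree $1$ therefore fails precisely when such a translated subtorus is present, which is the asserted equivalence.

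The crux is the appeal to Theorem \ref{thm:arapura}: it is what rules out exotic positive-dimensional components and forces ``off the origin and positive-dimensional'' to be synonymous with ``translated subtorus.'' Everything else—the reduction to \eqref{v3} and the bookkeeping for the intersection with $\wG^0$—is routine, the only mild point being the observation that the direction subtori are connected and hence contained in $\wG^0$.
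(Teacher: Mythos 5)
Your proof is correct and follows the same route as the paper: part \eqref{sk1} via Corollary \ref{cor:formal-ls} from $1$-formality alone, and part \eqref{sk2} via Arapura's Theorem \ref{thm:arapura} (with the degree-one refinement noted after it). You have merely made explicit the reduction to condition \eqref{v3} and the bookkeeping for intersecting with $\wG^0$, which the paper leaves implicit in its one-line proof.
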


\begin{proof}
Part \eqref{sk1} follows from Corollary \ref{cor:formal-ls}, 
using only the $1$-formal\-ity assumption, while part \eqref{sk2}
follows from Theorem \ref{thm:arapura}.
\end{proof}

In general, quasi-K\"{a}hler manifolds may fail to be locally 
$1$-straight, as illustrated in Example \ref{ex:cx heisenberg} 
below.  Perhaps more surprisingly, compact K\"{a}hler manifolds 
may fail to be $1$-straight; for a concrete example, we refer 
to \cite{Su}.

\begin{example}
\label{ex:cx heisenberg}
Let $X$ be the complex Heisenberg manifold, 
i.e., the total space of the $\C^{\times}$-bundle over 
$\C^{\times}\times \C^{\times}$ with Euler number~$1$. 
Then $X$ is a smooth, quasi-projective variety with $\VV^1(X)=\{1\}$, 
yet $\RR^1(X)=\C^2$.  Thus, the tangent cone formula 
fails in this instance, and so $X$ is neither $1$-formal, nor 
locally $1$-straight.
\end{example}

Let $(Y,0)$ be a quasi-homogeneous isolated surface singularity. 
Then $X=Y \setminus \{0\}$ is a smooth, quasi-projective 
variety which supports a ``good" $\C^{\times}$-action, 
with orbit space a smooth projective curve.   Moreover, 
$X$ deform-retracts onto the singularity link, which is 
a closed, smooth, orientable $3$-manifold.  

\begin{prop}
\label{prop:tc non1f}
Suppose the curve $X/\C^{\times}$ has genus $g>1$.  
Then $X$ is straight, yet $X$ is not $1$-formal. 
\end{prop}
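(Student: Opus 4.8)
The plan is to establish the two assertions of Proposition \ref{prop:tc non1f}---that $X=Y\setminus\{0\}$ is straight but not $1$-formal---by separate arguments, leveraging the geometry of the good $\C^{\times}$-action. First I would exploit the fact that $X$ deform-retracts onto the singularity link $N$, a closed, smooth, orientable $3$-manifold, so all cohomology jump loci of $X$ agree with those of $N$. The good $\C^{\times}$-action exhibits $X$ as the total space of a Seifert-type $\C^{\times}$-bundle over the smooth projective curve $C=X/\C^{\times}$ of genus $g>1$; I would use the Gysin sequence (or the Leray--Hirsch / wang sequence for the associated circle bundle on $N$) to compute $H^*(X,\C)$ in terms of $H^*(C,\C)$. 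The upshot I expect is that $b_1(X)=2g$ and that the cup-product structure is governed by the symplectic form on $H^1(C,\C)$, so that $H^1(X,\C)$ behaves cohomologically like a genus-$g$ surface.

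For straightness, the key is to identify $\WW^1(X)$ and $\RR^1(X)$ and verify straightness conditions \eqref{v1}, \eqref{v2}, \eqref{v3}. Since the $\C^{\times}$-orbit map $X\to C$ induces an epimorphism $\pi_1(X)\surj \pi_1(C)$ with the fiber class central, I would argue that every rank-$1$ local system contributing to $H_1$ factors through $\pi_1(C)$; because $C$ is a curve of genus $g>1$, Example \ref{ex:res surf} and Lemma \ref{lem:straight} give $\WW^1(C)=H^1(C,\C^\times)^0$, and pulling back along $X\to C$ I expect to obtain $\WW^1(X)=H^1(X,\C^\times)^0$ as well (the central fiber direction carries no extra jumping in degree $1$). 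Granting this, Lemma \ref{lem:straight} immediately yields that $X$ is straight, which settles the first assertion without any delicate translated-component analysis.

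For the failure of $1$-formality, the plan is to show the tangent cone formula of Theorem \ref{thm:tcone} breaks down, by producing a discrepancy between $\TC_1(\VV^1_d(X))$ and $\RR^1_d(X)$ at some depth $d$, or more directly by a Massey-product / weight-filtration obstruction. The natural route is to compare the depth filtrations: for the genus-$g$ surface $S_g$ itself, formula \eqref{eq:res surf} shows $\RR^1_d(S_g)$ jumps from all of $\C^{2g}$ down to $\{0\}$ only at $d=2g-1$, whereas the corresponding characteristic filtration on $X$---being that of a nontrivial $\C^\times$-bundle rather than a product---will have a different jump profile in the deeper strata $\VV^1_d(X)$. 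I would pin down this mismatch by computing $\dim H_1(X,\L_\rho)$ for generic $\rho$ via the Gysin sequence, showing the Euler-number $1$ twist shifts the depth at which the jump occurs, so that $\exp$ cannot carry $(\RR^1_d(X),0)$ isomorphically onto $(\VV^1_d(X),1)$ for every $d$. By Theorem \ref{thm:tcone} this failure of the tangent cone formula in some depth precludes $1$-formality.

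The main obstacle I anticipate is the precise bookkeeping in the second part: verifying that the Euler number $1$ of the $\C^\times$-bundle genuinely perturbs the deeper characteristic strata $\VV^1_d(X)$ away from their surface values, rather than merely reproducing them. This requires a careful twisted-homology computation---tracking the connecting map in the Gysin sequence as a function of the local system $\L_\rho$---and it is here that the nontriviality of the bundle (genus $g>1$ forcing a nonzero Euler class obstruction to a product structure) must be made to do real work. Establishing $\WW^1(X)=H^1(X,\C^\times)^0$ in the first part should by contrast be comparatively routine once the central extension structure of $\pi_1(X)$ is set up.
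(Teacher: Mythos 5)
Your overall architecture is reasonable, and it is genuinely different from the paper's proof, which is purely a citation: formula (14) of \cite{DPS-mz} gives $\VV^1(X)=H^1(X,\C^{\times})$, whence straightness via Lemma \ref{lem:straight}, and Proposition 7.1 of \cite{DPS-mz} gives the failure of $1$-formality. Your part 1 (Gysin sequence for the circle bundle, then Lemma \ref{lem:straight}) is essentially a direct proof of the cited formula and is fine in outline. The gap is in part 2, and it is fatal as stated: your claim that ``the cup-product structure is governed by the symplectic form on $H^1(C,\C)$, so that $H^1(X,\C)$ behaves cohomologically like a genus-$g$ surface'' is false. Precisely because the Euler number is nonzero, the Gysin sequence shows that $p^*\colon H^2(C,\C)\to H^2(X,\C)$ is the zero map while $p^*\colon H^1(C,\C)\to H^1(X,\C)$ is an isomorphism; hence for $a,b\in H^1(X,\C)$ one has $a\cup b=p^*(\bar a\cup\bar b)=0$. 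The cup product vanishes identically on $H^1(X,\C)$, so $\RR^1_d(X)=H^1(X,\C)=\C^{2g}$ for every $d\le 2g-1$ --- not the surface profile of \eqref{eq:res surf}.

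Moreover, you have the mechanism backwards: the Euler twist does not perturb the characteristic strata at all. For $\rho=p^*\bar\rho$ with $\bar\rho\ne 1$, the terms $H_0(C,\L_{\bar\rho})$ and $H_2(C,\L_{\bar\rho})$ flanking the connecting map (cap with the Euler class) in the Gysin sequence vanish, so $\dim H_1(X,\L_\rho)=\dim H_1(C,\L_{\bar\rho})=2g-2$, exactly the surface value. Consequently, if you executed your plan under your stated assumptions (resonance surface-like, characteristic strata shifted by the twist), you would find the two depth profiles agreeing at every $d$ and obtain no contradiction whatsoever. The true mismatch is the opposite one: it is the \emph{resonance} side that deviates from the surface, giving $\RR^1_{2g-1}(X)=\C^{2g}$, while $\VV^1_{2g-1}(X)=\{1\}$ since the generic twisted Betti number $2g-2$ is less than $2g-1$. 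These germs at $0$ and $1$ cannot be isomorphic, so Theorem \ref{thm:tcone} rules out $1$-formality. With this correction your strategy does go through, modulo one secondary point: for a general quasi-homogeneous singularity the Seifert fibration may have multiple fibers, so your reduction to an honest circle bundle over $C$ with $\pi_1(X)\surj\pi_1(C)$ is a simplification of the general case covered by the proposition.
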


\begin{proof}
According to formula (14) from \cite{DPS-mz}, 
we have  $\VV^1(X)=H^1(X,\C^{\times})$. Hence, 
by Lemma \ref{lem:straight}, $X$ is straight.  
The fact that $X$ is not $1$-formal follows from 
\cite[Proposition 7.1]{DPS-mz}.
\end{proof}

We illustrate this result with a concrete family of examples. 

\begin{example}
\label{ex:bundle}
Let $X$ be the total space of the $\C^{\times}$-bundle 
over the Riemann surface of genus $g$, with Euler 
number $-1$. Then $X$ is homotopy equivalent to 
the Brieskorn manifold $\Sigma(2, 2g+1, 2(2g+1))$.  
Assume now $g>1$; then clearly $X$ fits into the 
setup of Proposition \ref{prop:tc non1f}.  Hence, 
$X$ is straight, but not $1$-formal. 
\end{example}

\subsection{Dwyer--Fried invariants} 
\label{subsec:df kahler}

We conclude this section with a discussion of the 
$\Omega$-invariants of (quasi-) K\"{a}hler manifolds, 
and the extent to which these invariants are determined 
by the resonance varieties.

\begin{theorem}
\label{thm:df kahler}
Let $X$ be a $1$-formal, quasi-K\"{a}hler manifold 
(for instance, a compact K\"{a}hler manifold). Then:
\begin{enumerate}
\item \label{qk1}
$\Omega^1_1(X) =\overline{\RR}^1(X,\Q)^{\compl}$ 
and 
$\Omega^1_r(X) \subseteq \sigma_r(\RR^1(X,\Q))^{\compl}$, 
for $r\ge 2$. 
\item \label{qk2}
If $\WW^1(X)$ contains no positive-dimensional 
translated subtori, then 
$\Omega^1_r(X) = \sigma_r(\RR^1(X,\Q))^{\compl}$, 
for all $r\ge 1$. 
\end{enumerate} 
\end{theorem}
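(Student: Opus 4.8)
The plan is to deduce both parts from the machinery already assembled, since a $1$-formal quasi-K\"{a}hler manifold falls squarely within the scope of our earlier results; the only real work is translating the hypothesis of part \eqref{qk2} into a straightness condition via Arapura's structure theorem.

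First I would dispose of part \eqref{qk1}. Both assertions live in homological degree $1$, hence depend only on $G=\pi_1(X)$, which is $1$-formal because $X$ is. The equality $\Omega^1_1(X)=\overline{\RR}^1(X,\Q)^{\compl}$ is then exactly Corollary \ref{cor:df 1formal} applied to $G$; here I would remark that for $r=1$ the special Schubert variety $\sigma_1(\RR^1(X,\Q))$ is precisely the projectivized resonance variety $\overline{\RR}^1(X,\Q)$, so the two formulations coincide. The inclusion $\Omega^1_r(X)\subseteq \sigma_r(\RR^1(X,\Q))^{\compl}$ for $r\ge 2$ (in fact for all $r\ge 1$) is immediate from Corollary \ref{cor:df 1f}, valid for any $1$-formal space.

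For part \eqref{qk2} the strategy is to upgrade local $1$-straightness to honest $1$-straightness, and then invoke the equality of Theorem \ref{thm:straight omega}. By Proposition \ref{prop:straight kahler}\eqref{sk1}, $X$ is already locally $1$-straight, so conditions \eqref{v1} and \eqref{v2} hold in degree $1$; what remains is condition \eqref{v3}. The key point, which I view as the only genuine content beyond bookkeeping, is Arapura's theorem (Theorem \ref{thm:arapura}) together with its degree-$1$, depth-$1$ refinement: every irreducible component of $\WW^1(X)$ is a unitary translate $\rho\cdot T$ of an algebraic subtorus $T$. Consequently, if $\WW^1(X)$ contains no positive-dimensional translated subtori, then every positive-dimensional component must be untranslated, i.e.\ a subtorus through the origin $1$; equivalently, all components of $\WW^1(X)$ not passing through $1$ are $0$-dimensional, which is condition \eqref{v3}. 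This is precisely Proposition \ref{prop:straight kahler}\eqref{sk2}, and it certifies that $X$ is $1$-straight.

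Having secured $1$-straightness, I would finish by applying Theorem \ref{thm:straight omega} with $k=1$, obtaining $\Omega^1_r(X)=\Grass_r(H^1(X,\Q))\setminus\sigma_r(\RR^1(X,\Q))=\sigma_r(\RR^1(X,\Q))^{\compl}$ for all $r\ge 1$, as claimed. Equivalently, part \eqref{qk2} may be read off directly from Corollary \ref{cor:df tori}\eqref{1f2}, whose hypothesis---that all positive-dimensional components of $\WW^1(X)$ pass through $1$---is furnished by the same application of Theorem \ref{thm:arapura}. The structural input from Arapura is what guarantees that the failure of $1$-straightness can only originate from \emph{translated} positive-dimensional tori, so that excluding these forces \eqref{v3}; everything else is a formal consequence of the tangent cone formula for $1$-formal spaces and the characteristic-variety description of the $\Omega$-invariants.
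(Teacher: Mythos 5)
Your proposal is correct and follows essentially the same route as the paper: part \eqref{qk1} via Corollaries \ref{cor:df 1formal} and \ref{cor:df 1f}, and part \eqref{qk2} by using Theorem \ref{thm:arapura} to convert the no-translated-subtori hypothesis into the statement that all positive-dimensional components of $\WW^1(X)$ pass through $1$, then concluding via Corollary \ref{cor:df tori} (whose internal content --- $1$-straightness plus Theorem \ref{thm:straight omega} --- you simply unpack explicitly). The only difference is expository: you spell out the straightness bookkeeping that the paper compresses into citations.
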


\begin{proof}
Part \eqref{qk1}.  Using only the $1$-formality assumption, 
the two statements follow from Corollaries \ref{cor:df 1formal} 
and \ref{cor:df 1f}, respectively.

Part \eqref{qk2}.  In view of Theorem \ref{thm:arapura}, 
the hypothesis is equivalent to $\WW^1(X)$ containing 
no positive-dimensional component not passing through $1$. 
The conclusion follows from Corollary \ref{cor:df tori}.
\end{proof}

In general, though, the inclusion 
$\Omega^1_r(X) \subseteq \sigma_r(\RR^1(X,\Q))^{\compl}$
can be strict, provided $2\le r\le b_1(X)$. 
The next theorem identifies a fairly broad class of 
smooth, quasi-projective varieties for which this is the case. 
An explicit example will be given at the end of 
Section \ref{sec:arrs}.

\begin{theorem}
\label{thm:tt}
Let $X$ be a $1$-formal, smooth, quasi-projective variety. 
Suppose 
\begin{enumerate}
\item \label{h1} $\WW^1(X)$ has a $1$-dimensional component 
not passing through $1$;
\item  \label{h2} $\RR^1(X)$ has no codimension-$1$ components.
\end{enumerate}
Then $\Omega^1_{2}(X)$ is 
strictly contained in $\sigma_{2}(\RR^1(X,\Q))^{\compl}$. 
\end{theorem}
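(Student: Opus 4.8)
The plan is to prove strictness by exhibiting a single rational $2$-plane $P\subset H^1(X,\Q)$ that lies in the right-hand side but not in $\Omega^1_2(X)$. Since $X$ is $1$-formal, Corollary \ref{cor:df 1f} already gives the inclusion $\Omega^1_2(X)\subseteq\sigma_2(\RR^1(X,\Q))^{\compl}$, so it suffices to produce one $P\in\sigma_2(\RR^1(X,\Q))^{\compl}\setminus\Omega^1_2(X)$. By Theorem \ref{thm:df cv}, membership in $\Omega^1_2(X)$ is the finiteness of $\exp(P\otimes\C)\cap\WW^1(X)$, and $P\in\sigma_2(\RR^1(X,\Q))^{\compl}$ means exactly $P\cap\RR^1(X,\Q)=\{0\}$. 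Thus the target $P$ must simultaneously meet the resonance variety only at the origin and swallow an entire positive-dimensional piece of the characteristic variety.

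First I would record the structure of the two loci. By hypothesis \eqref{h1} together with Theorem \ref{thm:arapura} and its depth-one refinement, $\WW^1(X)$ has a $1$-dimensional component $W=\rho T$ with $T=\dire(W)$ a $1$-dimensional subtorus and $\rho$ a torsion character; as every subtorus of $(\C^\times)^n$ is defined over $\Q$, I may write $T=\exp(\ell\otimes\C)$ for a rational line $\ell=T_1(T)$, and $\rho=\exp(2\pi\ii w)$ for some $w\in H^1(X,\Q)$. On the resonance side, $1$-formality and the quasi-K\"ahler hypothesis let me invoke Theorem \ref{thm:res quasi-kahler}: the positive-dimensional components $L_\alpha$ of $\RR^1(X)$ are rational linear subspaces with $L_\alpha\cap L_\beta=\{0\}$ for $\alpha\ne\beta$, each of dimension $\ge 2$ and, by hypothesis \eqref{h2}, of codimension $\ge 2$.

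The two geometric inputs I would then combine are transversality of the direction line to resonance and containment of the translated torus. Since $1\notin W$ we have $\rho\notin T$, hence $w\notin\ell+H^1(X,\Z)$, so $P:=\spn_\Q(\ell,w+m_0)$ is genuinely $2$-dimensional for every $m_0\in H^1(X,\Z)$. By local $1$-straightness each $L_\alpha$ is the tangent space of a subtorus component $\exp(L_\alpha\otimes\C)$ of $\WW^1(X)$ through $1$, distinct from $W$; applying Theorem \ref{thm:qp cv}\eqref{a1} to $W$ and this component forces $\ell\cap L_\alpha=\{0\}$ for all $\alpha$, since a nonzero intersection would yield $T=\exp(L_\alpha\otimes\C)$, impossible as $\dim T=1<\dim L_\alpha$. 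For containment, $\ell\subseteq P$ gives $T\subseteq\exp(P\otimes\C)$, while $w+m_0\in P$ and $\exp(2\pi\ii(w+m_0))=\rho$ (as $\exp(2\pi\ii m_0)=1$) give $\rho\in\exp(P\otimes\C)$; hence $W=\rho T\subseteq\exp(P\otimes\C)$ and the intersection with $\WW^1(X)$ is infinite, so $P\notin\Omega^1_2(X)$, independently of the choice of $m_0$.

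The remaining—and main—obstacle is to arrange that this same $P$ avoids every $L_\alpha$ off the origin, and here I would exploit the freedom in $m_0$, which changes $P$ while preserving $W\subseteq\exp(P\otimes\C)$. Passing to the quotient $q\colon H^1(X,\Q)\to V:=H^1(X,\Q)/\ell$, the relation $\ell\cap L_\alpha=\{0\}$ makes $q|_{L_\alpha}$ injective, so each $q(L_\alpha)$ has codimension $\ge 1$ in $V$ by hypothesis \eqref{h2}; a direct check then shows $P\cap L_\alpha=\{0\}$ exactly when $q(w)+q(m_0)\notin q(L_\alpha)$. Since $q(H^1(X,\Z))$ is a full-rank lattice in $V$ and a finite union of proper affine subspaces cannot cover such a lattice, I can choose $m_0$ with $q(w+m_0)\notin\bigcup_\alpha q(L_\alpha)$, giving $P\cap\RR^1(X,\Q)=\{0\}$ and completing the construction. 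The crux is precisely this genericity step: hypothesis \eqref{h2} is what keeps the projected subspaces $q(L_\alpha)$ proper after dividing by the fixed line $\ell$, and the argument must be carried out so that adjusting $w$ by an integral class gains transversality to resonance without disturbing the containment of the translated torus.
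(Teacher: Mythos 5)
Your proof is correct and is essentially the paper's own argument: the same transversality claim (the direction line $\ell$ of the translated component meets $\RR^1(X)$ only at the origin, forced by Theorem \ref{thm:qp cv}), and the same construction of a rational $2$-plane spanned by $\ell$ and an integral translate of the rational logarithm of the torsion character $\rho$, which keeps $\rho T$ inside $\exp(P\otimes\C)$ while avoiding resonance. The only deviations are cosmetic: you deduce transversality from Theorem \ref{thm:qp cv}\eqref{a1} together with the dimension bound $\dim L_\alpha\ge 2$, where the paper also invokes part \eqref{a4}, and your genericity step (a coset of a full-rank lattice in $H^1(X,\Q)/\ell$ cannot be covered by finitely many proper subspaces) replaces the paper's equivalent density argument for $\pi(q+\Z^n)$ in $\RP^{n-1}$.
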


\begin{proof}
Set $n=b_1(X)$, and identify $H=H^1(X,\Q)$ with $\Q^n$.
By assumption  \eqref{h1}, the characteristic variety 
$\WW^1(X)\subset (\C^{\times})^n$ has a 
component of the form $W=\rho\cdot T$, with 
\begin{itemize}
\item $T=\exp(\ell \otimes \C)$, where $\ell$ is a $1$-dimensional  
subspace in $H$;
\item $\rho = \exp(2\pi i q)$, where $q$ is a vector in $H\setminus \ell$.
\end{itemize}

We claim that the line $\ell$ is not contained in 
the resonance variety $\RR^1(X,\Q)$. For, if it were, the 
$1$-dimensional algebraic torus $T$ would be contained 
in $\WW^1(X)$.  But we know from 
Theorem \ref{thm:qp cv}\eqref{a4} that 
$T$ is not a component of $\WW^1(X)$.  Hence, 
there would exist a component $W'$ 
with $T\subsetneqq W'$.  Therefore, 
\begin{align*}
&\dire(W)=T\ne W'=\dire (W'),\ \: \text{and}\\ 
&T_1(\dire(W))\cap T_1(\dire (W'))=\ell\otimes \C\ne \{0\}.
\end{align*}
This contradicts Theorem \ref{thm:qp cv}\eqref{a1}, thereby 
establishing the claim. 

In view of the above, and of assumption \eqref{h2}, the 
resonance variety $\RR^1(X,\R)$ has codimension 
at least $2$ in $H_{\R}=H^1(X,\R)$. Therefore, the set 
\[
Z=\{ P \in \Grass_2(H_{\R}) \mid \ell \subset P 
\text{ and } P \cap \RR^1(X,\R) \ne \{0\} \}
\] 
is a proper subvariety of $\Grass_2(H_{\R})$.  
Hence, there is a non-zero vector $r_0\in \R^{n}$, 
and an open cone $U$ containing $r_0$, such that, 
for all $r\in U$, the plane $P$ spanned by $r$ and $\ell$ 
intersects $\RR^1(X,\R)$ only at $0$.

Let $\pi\colon \R^n\setminus \{0\} \to \RP^{n-1}$ be the 
projection map. Clearly, $\pi(\Z^n\setminus \{0\})$ is a 
dense subset of $\RP^{n-1}$.  Thus, $\pi(q+\Z^n)$ is 
also dense, and so intersects $\pi(U)$.  
Hence, there is a lattice point $\lambda\in \Z^n$ 
such that $\pi(q+\lambda)$ belongs to $\pi(U)$. 
The rational vector  $q_0:=q+\lambda$ then belongs to $U$.

Let $P_0$ be the $2$-dimensional subspace of $H$ spanned 
by $\ell$ and $q_0$.   By construction, $P_0\cap \RR^1(X,\Q)=\{0\}$, 
and so $P_0\in \sigma_{2}(\RR^1(X,\Q))^{\compl}$.  
On the other hand, the algebraic torus 
$T_0=\exp(P_0\otimes \C)$ contains both 
$\exp(\ell \otimes \C)=T$ and $\exp(2\pi i q_0)=\rho$; 
therefore, $T_0\supset \rho T$. Consequently, 
$\dim (T_0\cap \WW^1(X))>0$, showing that 
$P_0\notin \Omega^1_{2}(X)$. 
\end{proof}

\section{Hyperplane arrangements}
\label{sec:arrs}

We conclude with a class of spaces exhibiting a strong  
interplay between the resonance varieties and the finiteness 
properties of free abelian covers.  
These spaces, obtained by deleting a finite number 
of hyperplanes from a complex affine space, are  
minimal, formal, and locally straight, but not always straight. 

\subsection{Cohomology jump loci} 
\label{subsec:cv arr}

Let $\A$ be an arrangement of hyperplanes in $\C^{\ell}$. 
To start with, we will assume that all hyperplanes 
in $\A$ pass through the origin; non-central arrangements 
can be handled much the same way, using standard 
coning and deconing constructions. 

Let $X=X(\A)$ be the complement of the union 
of the hyperplanes comprising $\A$.  Then $X$ can be 
viewed as the complement of a normal-crossing divisor 
in a suitably blown-up $\CP^{\ell}$.   In particular, 
$X$ has the homotopy type of an $\ell$-dimensional 
CW-complex.  Moreover, this CW-complex can be chosen 
to be minimal (Dimca--Papadima, Randell). 

Let $G=G(\A)$ be the fundamental group of the 
complement.  Its abelianization, $G_{\ab}$, is the free 
abelian group of rank $n=\abs{\A}$.  Thus, we may identify 
the character group $\wG$ with the complex algebraic 
torus  $(\C^{\times})^n$.   Let $\VV^i_d(\A)=\VV^i_d(X(\A))$ 
be the characteristic varieties of the arrangement.  
By Arapura's work \cite{Ar}, these varieties consist 
of subtori of  $(\C^{\times})^n$, possibly translated 
by unitary characters, together with a finite number 
of isolated unitary characters.   

The cohomology ring $A=H^*(X(\A),\Z)$ was computed 
by Brieskorn in the early 1970s, building on pioneering 
work of Arnol'd on the cohomology ring of the braid arrangement.  
It follows from Brieskorn's work that the space $X$ is formal; 
in particular, the fundamental group of $X$ is $1$-formal. 
In 1980, Orlik and Solomon gave a simple description 
of the ring $A$, solely in terms of the intersection 
lattice $L(\A)$, i.e., the poset of all intersections 
of $\A$, ordered by reverse inclusion.  

The resonance varieties $\RR^i_d(\A)=\RR^i_d(X(\A))$ 
were first defined and studied by Falk in \cite{Fa97}.   
Identifying $H^1(X,\C)$ with $\C^n$, we may view the 
resonance varieties of $\A$ as homogeneous 
subvarieties of $\C^n$. It is known from work 
of Yuzvinsky that these varieties actually lie in the 
hyperplane $\{x\in \C^n \mid \sum_{i=1}^{n} x_i=0\}$. 
Moreover, it follows from \cite{EPY} that resonance 
``propagates" for the Orlik-Solomon algebra.  More 
precisely, if $i\le k \le \ell$, then 
$\RR^{i}_1(\A) \subseteq \RR^{k}_1(\A)$;  
in particular, $\RR^{k}(\A)=\RR^k_1(\A)$.  

\subsection{Straightness} 
\label{subsec:arr straight}
As above, let $X=X(\A)$ be an arrangement complement.   
Using work of Esnault, Schechtman, and 
Viehweg \cite{ESV}, one can show that the exponential 
map, $\exp\colon H^1(X,\C) \to H^1(X,\C^{\times})$, 
induces an isomorphism of analytic germs from  
$(\RR^i_d(X),0)$ to $(\VV^i_d(X),1)$, 
for all $i\ge 0$ and $d>0$. We then have 
\begin{equation}
\label{eq:arr tcone}
\TC_1(\VV^i_d(\A))=\RR^i_d(\A), \quad\text{for all $d>0$}. 
\end{equation}
In particular, all the resonance 
varieties $\RR^i_d(\A)$ are finite unions of rationally 
defined linear subspaces. 

The tangent cone formula \eqref{eq:arr tcone} was 
first proved in degree $i=1$ (using different methods) 
by Cohen and Suciu \cite{CS99} and Libgober \cite{Li02}, 
and was generalized to the higher-degree jump loci by 
Cohen and Orlik \cite{CO00}. 

\begin{prop}
\label{prop:arr straight}
Let $\A$ be a hyperplane arrangement in 
$\C^{\ell}$, with complement $X$. 
\begin{enumerate}
\item \label{p1} 
$X$ is locally straight.
\item \label{p2} 
$X$ is $k$-straight if and only if $\VV^k(X)$ contains 
no positive-di\-mensional translated tori.  
\end{enumerate}
\end{prop}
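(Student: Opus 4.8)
The plan is to build both parts on the two structural facts assembled in \S\ref{subsec:cv arr}--\S\ref{subsec:arr straight}: Arapura's theorem (Theorem \ref{thm:arapura}), which presents every $\VV^i_d(\A)$ as a finite union of unitary translates $\rho T$ of algebraic subtori, and the tangent cone formula \eqref{eq:arr tcone}, $\TC_1(\VV^i_d(\A))=\RR^i_d(\A)$. Since $G_{\ab}=H_1(X,\Z)$ is free abelian, the character group $\wG$ is connected, so $\WW^i(X)=\VV^i(X)$ and I may phrase the straightness conditions of Definitions \ref{def:loc straight} and \ref{def:straight} directly in terms of $\VV$.

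First I would establish Part \eqref{p1} by checking conditions \eqref{v1} and \eqref{v2} for every $i$. For \eqref{v1}, write a component of $\VV^i(X)$ as $\rho T$; if it contains the origin $1$ then $\rho^{-1}\in T$, hence $\rho\in T$ and $\rho T=T$ is an honest algebraic subtorus. For \eqref{v2}, I would use that $\TC_1$ commutes with finite unions together with \eqref{eq:arr tcone}: since $\VV^i(X)=\bigcup_{j\le i}\VV^j_1(X)$,
\[
\TC_1(\VV^i(X))=\bigcup_{j\le i}\TC_1(\VV^j_1(X))=\bigcup_{j\le i}\RR^j_1(X)=\RR^i(X).
\]
As these hold for all $i$, the complement $X$ is locally straight.

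For Part \eqref{p2}, Part \eqref{p1} shows conditions \eqref{v1} and \eqref{v2} already hold for all $i\le k$, so by Definition \ref{def:straight} the space $X$ is $k$-straight precisely when condition \eqref{v3} holds for every $i\le k$. Unwinding \eqref{v3} with Arapura: every component of $\VV^i(X)$ is a translate $\rho T$, and a positive-dimensional one violates \eqref{v3} exactly when it misses the origin, i.e.\ when it is a genuinely translated positive-dimensional subtorus. Thus $X$ is $k$-straight if and only if none of $\VV^0(X),\dots,\VV^k(X)$ has a positive-dimensional translated-torus component, which is the assertion that $\VV^k(X)$ contains no positive-dimensional translated tori.

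The only delicate point is this last identification, where I must convert the condition ``for every $i\le k$'' into a statement about $\VV^k(X)$ alone. The easy direction is immediate, since taking $i=k$ already forbids positive-dimensional translated components of $\VV^k(X)$; the reverse direction requires ruling out a positive-dimensional translated component of some lower $\VV^i(X)$ that is properly absorbed into a subtorus component of $\VV^k(X)$. Here I expect to lean on the nesting $\VV^0(X)\subseteq\cdots\subseteq\VV^k(X)$ and on the finiteness and transversality built into Arapura's decomposition (Theorem \ref{thm:arapura} and Theorem \ref{thm:qp cv}) to force such a component to reappear as a component of $\VV^k(X)$. Everything else is a routine unwinding of the definitions of (local) $k$-straightness.
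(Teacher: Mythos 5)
Your proof is correct and takes essentially the same route as the paper: part \eqref{p1} combines Arapura's Theorem \ref{thm:arapura} (yielding condition \eqref{v1}) with the tangent cone formula \eqref{eq:arr tcone} (yielding condition \eqref{v2}, via the fact that $\TC_1$ commutes with finite unions), and part \eqref{p2} reduces $k$-straightness to condition \eqref{v3}, which Arapura's decomposition converts into a statement about positive-dimensional translated components. The ``delicate point'' you flag---that a positive-dimensional translated component of some $\VV^i(X)$ with $i<k$ might be strictly absorbed into a subtorus component of $\VV^k(X)$---is a genuine subtlety, but the paper's own proof is silent on it too (it simply says the conclusion ``follows again from Theorem \ref{thm:arapura}''), so your treatment is if anything more scrupulous; be aware only that Theorem \ref{thm:qp cv} concerns $\VV^1(X)$ alone, so it cannot by itself control components in higher degrees.
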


\begin{proof}
For part \eqref{p1}, we must verify the two properties 
from Definition \ref{def:loc straight}. Property \eqref{v1} 
follows at once from Arapura's theorem \ref{thm:arapura}, 
while property \eqref{v2} follows from the tangent cone 
formula \eqref{eq:arr tcone}.

For part \eqref{p2}, we must verify the additional property 
\eqref{v3} from Definition \ref{def:straight}.   The conclusion 
follows again from Theorem \ref{thm:arapura}.
\end{proof}

As first noted in \cite{S01}, there do exist 
arrangements $\A$ for which $\VV^1(\A)$ contains 
positive-dimensional translated components.  
By Proposition \ref{prop:arr straight}, such arrangements 
are not $1$-straight. We will come back to this point 
in Example \ref{ex:deleted B3}. 

\subsection{Dwyer--Fried invariants} 
\label{subsec:df arr}

Let us define the Dwyer--Fried invariants of an arrangement 
$\A$ as $\Omega^{i}_{r}(\A)=\Omega^{i}_{r}(X(\A))$. 
Suppose $\A$ consists of $n$ hyperplanes in $\C^{\ell}$.  
For each $1 \le r\le n$, there is then a filtration 
\begin{equation}
\label{eq:arr df filt}
\Grass_r(\Q^n) = \Omega^0_r(\A) \supseteq \Omega^1_r(\A)  
\supseteq \Omega^2_r(\A)  \supseteq \cdots \supseteq \Omega^{\ell}_r(\A).
\end{equation}

The next result establishes a comparison between 
the terms of this filtration and the incidence varieties to 
the resonance varieties of $\A$.

\begin{theorem}
\label{thm:df arr}
Let $\A$ be an arrangement of $n$ hyperplanes, 
and fix an integer $k\ge 1$. Then, 
\begin{enumerate}
\item \label{ar1} 
$\Omega^k_r(\A) \subseteq 
\Grass_r(\Q^n) \setminus \sigma_r(\RR^k(\A,\Q))$, 
for all $r\ge 1$. 

\item \label{ar2} 
$\Omega^1_1(\A)=\QP^{n-1}\setminus \overline{\RR}^1(\A,\Q)$.  
 
\item \label{ar3} 
If $\VV^k(\A)$ contains no positive-dimensional 
translated tori, then 
$\Omega^k_r(\A) =\Grass_r(\Q^n) \setminus \sigma_r(\RR^k(\A,\Q))$,
for all $r\ge 1$. 
\end{enumerate}
\end{theorem}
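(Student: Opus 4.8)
The plan is to deduce all three parts from structural facts about arrangement complements already recorded in \S\ref{subsec:cv arr}--\S\ref{subsec:arr straight}: their local straightness, their $1$-formality, and the straightness criterion of Proposition~\ref{prop:arr straight}. Throughout I write $X = X(\A)$ and $G = G(\A)$, and I use that $H_1(X,\Z) = \Z^n$ is torsion-free, so that the character group $H^1(X,\C^{\times})$ is connected; in particular, $\VV^i(\A) = \WW^i(\A)$ for every $i$.

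For part \eqref{ar1}, I would invoke Proposition~\ref{prop:arr straight}\eqref{p1}, by which $X$ is locally straight, hence locally $k$-straight for our fixed $k$; the asserted inclusion is then an immediate application of Corollary~\ref{cor:res bound} in degree $i = k$. For part \eqref{ar2}, I would use the $1$-formality of $X$, recalled in \S\ref{subsec:cv arr} via Brieskorn's computation of the Orlik--Solomon algebra. Since the set $\Omega^1_1$ depends only on the fundamental group, Corollary~\ref{cor:df 1formal} applied to the $1$-formal group $G$ gives $\Omega^1_1(\A) = \overline{\RR}^1(\A,\Q)^{\compl}$; under the identification $\Grass_1(\Q^n) = \QP^{n-1}$, this is exactly the stated equality.

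For part \eqref{ar3}, the key observation is that, by the connectedness noted above, the hypothesis that $\VV^k(\A)$ contains no positive-dimensional translated tori is precisely the condition in Proposition~\ref{prop:arr straight}\eqref{p2} ensuring that $X$ is $k$-straight. The equality then follows at once from Theorem~\ref{thm:straight omega} with $i = k$.

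I do not expect any genuine obstacle: the substantive work has already been done upstream, in establishing local straightness through Arapura's theorem and the tangent cone formula \eqref{eq:arr tcone}, and in proving the straightness criterion of Proposition~\ref{prop:arr straight}. The only points requiring care are bookkeeping ones---reconciling the $\VV$/$\WW$ distinction (handled by torsion-freeness of $H_1$) and matching the projectivized statement of part \eqref{ar2} with the Grassmannian statements of parts \eqref{ar1} and \eqref{ar3}.
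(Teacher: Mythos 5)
Your proposal is correct and follows exactly the paper's own proof: part \eqref{ar1} from Proposition~\ref{prop:arr straight}\eqref{p1} and Corollary~\ref{cor:res bound}, part \eqref{ar2} from $1$-formality and Corollary~\ref{cor:df 1formal}, and part \eqref{ar3} from Proposition~\ref{prop:arr straight}\eqref{p2} and Theorem~\ref{thm:straight omega}. Your additional remark that torsion-freeness of $H_1(X,\Z)$ makes the character group connected, so that $\VV^i(\A)=\WW^i(\A)$, is a worthwhile bookkeeping point that the paper leaves implicit, but it does not change the argument.
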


\begin{proof}
Part \eqref{ar1} follows from Proposition \ref{prop:arr straight}\eqref{p1}  
and Corollary \ref{cor:res bound}.  

Part \eqref{ar2} follows from the $1$-formality of the 
complement of $\A$ and Corollary \ref{cor:df 1formal}.

Part \eqref{ar3} follows from 
Proposition \ref{prop:arr straight}\eqref{p2}  and 
Theorem \ref{thm:straight omega}.
\end{proof}

In other words, each Dwyer--Fried invariant $\Omega^k_r(\A)$ 
is included in the complement of a union of special Schubert 
varieties of the form $\sigma_r(L)$, where $L$  runs through 
the components of $\RR^k(\A,\Q)$, with the inclusion being an 
equality when $\VV^k(\A)=\bigcup_{L} \exp(L\otimes \C)$. 

\begin{remark}
\label{rem:falk}
In \cite{Fa04}, Falk gives a decomposition of 
the resonance variety $\RR^1(\A)$ into combinatorial 
pieces and shows that, projectively, each of these
pieces is the ruled variety corresponding to an 
intersection of special Schubert varieties in special 
position in the Grassmannian of lines in projective space. 
It would be interesting to see if Falk's description 
sheds additional light on the Dwyer--Fried invariants 
$\Omega^1_r(\A)$.
\end{remark}

\subsection{Resonance varieties of line arrangements}
\label{subsec:line arr}

For the rest of this section, we will concentrate on the 
resonance varieties $\RR^1(\A)=\RR^1(G(\A))$ and 
their relation to the Dwyer--Fried invariants 
$\Omega^1_r(\A)=\Omega^1_r(G(\A))$.   We 
start with a brief review of the former. 

By the Lefschetz-type theorem of Hamm and L\^{e}, 
taking a generic two-dimensional section does not 
change the group of the arrangement. 
Thus, we may assume $\A=\{\ell_1,\dots ,\ell_n\}$ is an 
affine line arrangement in $\C^2$, for which no two lines are 
parallel.  

The variety $\RR^1(\A)$  is a union of linear subspaces 
in $\C^n$. Each subspace has dimension at least $2$, 
and each pair of subspaces meets transversely at $0$.
The simplest components of $\RR^1(\A)$ are the 
{\em local}\/ components:  to an intersection point 
$v_J=\bigcap_{j\in J} \ell_j$ of multiplicity $\abs{J} \ge 3$, 
there corresponds a subspace $L_J$ of dimension $\abs{J}-1$, 
given by equations of the form $\sum_{j\in J} x_j = 0$, and 
$x_i=0$ if $i\notin J$.  The remaining components correspond 
to certain ``neighborly partitions" of sub-arrange\-ments of $\A$.  

If $\abs{\A}\le 5$, then all components of $\RR^1(\A)$ are local.  
For $\abs{\A}\ge 6$, though, the resonance variety $\RR^1(\A)$ 
may have non-local components. 

\begin{example}
\label{ex:braid}
Let $\A$ be the braid arrangement, with defining polynomial 
$Q(\A)=z_0z_1z_2(z_0-z_1)(z_0-z_2)(z_1-z_2)$. 
Take a generic plane section, and label the 
corresponding lines as $6,2,4,3,5,1$. 
The variety $\RR_{1}(\A)\subset \C^6$ has $4$ 
local components, corresponding to the triple 
points $124, 135, 236, 456$, 
and one non-local  component, 
$L_{\Pi}=\{ x_1+x_2+x_3=x_1- x_6=x_2-x_5=x_3-x_4=0 \}$, 
corresponding to the neighborly partition $\Pi=(16| 25 | 34)$.
\end{example}

For an arbitrary arrangement $\A$, work of 
Falk, Pereira, and Yuzvinsky \cite{FY, PeY, Yu} 
shows that any non-local component in $\RR^{1}(\A)$ 
has dimension either $2$ or $3$. 

\subsection{$\Omega$-invariants of line arrangements}
\label{subsec:df line arr}

We are in a position now where we can compute explicitly the   
Dwyer--Fried invariants of some line arrangements.  We start 
with a simple example.

\begin{example}
\label{ex:near pencil}  
Let $\A$ be the arrangement with defining polynomial 
$Q(\A)=z_0z_1z_2(z_1-z_2)$.  The variety 
$\RR^{1}(\A)\subset \C^4$ has a single component, 
namely, the $2$-plane $L=\{x\mid x_1+x_2+x_3=x_4=0\}$, 
while $\VV^1(\A)=\exp(L)$. Using Theorem \ref{thm:df arr}, 
we obtain
\[
\Omega^1_r(\A)=
\begin{cases}
\QP^3 \setminus \overline{L} & \text{if $r=1$},\\
\Grass_2(\Q^4) \setminus \sigma_2(L) & \text{if $r=2$},\\
\emptyset & \text{if $r\ge 3$}.
\end{cases}
\]
Here, the Grassmannian $\Grass_2(\Q^4)$ is the 
hypersurface in $\bP(\bigwedge^2 \Q^4)$ with equation 
$p_{12}p_{34}-p_{13}p_{24}+p_{23}p_{14}=0$, 
while the Schubert variety $\sigma_2(L)$ is the $3$-fold in 
$\Grass_2(\Q^4)$ cut out by the hyperplane 
$p_{12}-p_{13}+p_{23}=0$. 
\end{example}

\begin{prop}
\label{prop:df nr}
Let $\A$ be an arrangement of $n$ lines in $\C^{2}$, 
and let $m$ be the maximum multiplicity of its 
intersection points.
\begin{enumerate}
\item \label{z1}
If $m=2$, then $\Omega^1_r(\A) =  \Grass_r(\Q^n)$, 
for all $r\ge 1$.
\item \label{z2}
If $m\ge 3$, then  
$\Omega^1_r(\A) =  \emptyset$, for all $r \ge n - m + 2$.
\end{enumerate}
\end{prop}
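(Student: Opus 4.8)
The plan is to read off both statements from the description of the irreducible components of $\RR^1(\A)$ recalled in \S\ref{subsec:line arr}, combined with the Schubert codimension formula of \S\ref{subsec:incidence} and the comparison theorem \ref{thm:df arr}.

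For part \eqref{z2} I would fix a point $v_J=\bigcap_{j\in J}\ell_j$ of maximal multiplicity $\abs{J}=m\ge 3$. It gives a \emph{local} component $L_J\subseteq\RR^1(\A)$, defined over $\Q$ by $\sum_{j\in J}x_j=0$ and $x_i=0$ for $i\notin J$, with $\dim L_J=m-1$ and hence $\codim L_J=n-m+1$. The decisive observation is linear-algebraic and field-independent: once $r+\dim L_J>n$, every $r$-plane $P$ satisfies $\dim(P\cap L_J)\ge r+(m-1)-n>0$, so $\sigma_r(L_J)=\Grass_r(\Q^n)$. This occurs exactly when $r\ge n-m+2$. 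Since $L_J\subseteq\RR^1(\A,\Q)$ yields $\sigma_r(L_J)\subseteq\sigma_r(\RR^1(\A,\Q))$, the complement $\Grass_r(\Q^n)\setminus\sigma_r(\RR^1(\A,\Q))$ is empty for such $r$, and the unconditional inclusion of Theorem \ref{thm:df arr}\eqref{ar1} forces $\Omega^1_r(\A)=\emptyset$. No straightness hypothesis is needed here.

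For part \eqref{z1}, the hypothesis $m=2$ rules out multiple points, hence all local components, and with only double points there are no neighborly partitions, hence no non-local components either; thus $\RR^1(\A)=\{0\}$ and $\sigma_r(\RR^1(\A,\Q))=\emptyset$. Now Theorem \ref{thm:df arr}\eqref{ar1} gives only the trivial inclusion $\Omega^1_r(\A)\subseteq\Grass_r(\Q^n)$, so the real content is the reverse inclusion. To obtain it I would invoke the classical theorem of Hattori: the complement of $n$ lines in general position is homotopy equivalent to the $2$-skeleton of $T^n$, that is, to the toric complex $T_{\Gamma}$ associated to the complete graph $\Gamma$ on $n$ vertices. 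Since no induced subgraph of a complete graph is disconnected, Theorem \ref{thm:cjl tc} gives $\RR^1(T_\Gamma)=\{0\}$, and by Corollary \ref{cor:straight toric} the space $T_\Gamma$ is straight. Hence, by homotopy invariance of the $\Omega$-sets together with Theorem \ref{thm:df toric}, $\Omega^1_r(\A)=\Omega^1_r(T_\Gamma)=\sigma_r(\{0\})^{\compl}=\Grass_r(\Q^n)$.

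The only real obstacle is in part \eqref{z1}: once $\RR^1(\A)=\{0\}$ the resonance upper bound becomes vacuous, so the equality hinges on showing that $\VV^1(\A)$ carries no positive-dimensional \emph{translated} component (such components do occur for arrangements, as the deleted $\operatorname{B}_3$ example demonstrates). Reducing to the straight toric complex $T_{\Gamma}$ handles this cleanly; alternatively one could argue directly from Arapura's theorem \ref{thm:arapura} and Theorem \ref{thm:qp cv} that a generic arrangement supports no pencil or multinet, so that $\WW^1(\A)=\{1\}$, after which Theorem \ref{thm:df cv} gives the claim immediately. Part \eqref{z2}, by contrast, reduces to a one-line codimension count.
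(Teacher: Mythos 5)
Your proof is correct. Part \eqref{z2} is the paper's own argument: the multiplicity-$m$ point gives a local component $L$ of $\RR^1(\A,\Q)$ of dimension $m-1$, the Schubert variety $\sigma_r(L)$ is all of $\Grass_r(\Q^n)$ precisely when $r\ge n-m+2$ (your dimension count is just the codimension formula $\codim\sigma_r(L)=n-m+2-r$ of \S\ref{subsec:incidence} made explicit), and the unconditional inclusion of Theorem \ref{thm:df arr}\eqref{ar1} finishes; no straightness is needed, as you say. In part \eqref{z1} you take a genuinely different route. The paper quotes Zariski's theorem --- a line arrangement with only double points has abelian fundamental group, so $G(\A)\cong\Z^n$ --- and then appeals to Example \ref{ex:torus}, using that $\Omega^1_r$ depends only on the fundamental group. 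You instead quote Hattori's theorem, which gives the stronger statement that $X(\A)$ is homotopy equivalent to the $2$-skeleton $T_\Gamma$ of $T^n$, and then run the paper's toric-complex machinery: $\RR^1(T_\Gamma)=\{0\}$, $T_\Gamma$ is straight (Corollary \ref{cor:straight toric}), and Theorem \ref{thm:df toric} gives $\Omega^1_r=\Grass_r(\Q^n)$. Both arguments correctly dispose of the translated-component worry that makes the resonance upper bound alone insufficient for equality: the paper because the free abelian covers of a torus (equivalently, $\WW^1(\Z^n)=\{1\}$) are completely understood, you because straight spaces satisfy the equality of Theorem \ref{thm:straight omega}. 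Your route uses heavier input (a homotopy equivalence where an isomorphism of fundamental groups suffices), but it has the merit of exhibiting generic arrangement complements explicitly as members of the paper's class of straight spaces. Your alternative sketch --- deducing $\WW^1(\A)=\{1\}$ from the absence of pencils and multinets --- is the one loose step: it leans on the Falk--Pereira--Yuzvinsky description of positive-dimensional components, which the paper mentions but does not develop far enough to quote; since you offer it only as an alternative, it does not affect the correctness of the proof.
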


\begin{proof}
If $\A$ has only double points, then $G(\A)=\Z^n$,   
by a well-known theorem of Zariski. Assertion \eqref{z1} 
follows from Example \ref{ex:torus}.  

Now suppose $\A$ has an intersection point of multiplicity $m\ge 3$.  
Then $\RR^1(\A)$ has a (local) component $L$ of dimension 
$m-1$.  The corresponding Schubert variety, $\sigma_r(L)$, 
has codimension $n-m+2-r$.  Assertion \eqref{z2}  follows 
from Theorem \ref{thm:df arr}.
\end{proof}

\begin{prop}
\label{prop:df 12}
Let $\A$ be an arrangement of $n$ lines in $\C^{2}$. 
Suppose $\A$ has $1$ or $2$ lines which contain all 
the intersection points of multiplicity $3$ and higher.  Then 
$\Omega^1_r(\A) =  \sigma_r(\RR^1(\A,\Q))^{\compl}$, 
for all $1\le r\le n$.
\end{prop}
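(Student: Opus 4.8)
The plan is to isolate, as the sole nontrivial point, the claim that under the stated hypothesis the characteristic variety $\VV^1(\A)$ contains no positive-dimensional translated subtorus. Granting this, Theorem~\ref{thm:df arr}\eqref{ar3} (equivalently, Proposition~\ref{prop:arr straight}\eqref{p2} together with Theorem~\ref{thm:straight omega}) immediately yields the asserted equality $\Omega^1_r(\A)=\sigma_r(\RR^1(\A,\Q))^{\compl}$ for every $r$. Thus I would not need to analyze $\RR^1(\A)$ at all; the entire content is to rule out translated components of $\VV^1(\A)$.

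To that end, I would first recall the geometric description of the positive-dimensional components of $\VV^1(\A)$. By Theorem~\ref{thm:arapura}, each such component has the form $W=\rho\, T$ with $T=\dire(W)$ a subtorus and $\rho$ a torsion character, and it arises from an orbifold pencil $f\colon X(\A)\to\Sigma$ onto a curve $\Sigma$; the indeterminacy (base) points of the associated rational map lie on lines belonging to several fibers, and hence are intersection points of $\A$ of multiplicity at least $3$. The component $W$ contains $1$ exactly when $\Sigma$ carries no torsion in its orbifold first homology, whereas a \emph{translated} component ($\rho\neq1$) forces $\Sigma$ to be rational and to support a genuine multiple-fiber (orbifold) structure. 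This is the situation I must show cannot occur.

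The decisive step is then combinatorial. Assuming a translated component $W=\rho\,T$ existed, its base points would be multiple points of $\A$, hence by hypothesis all of them would lie on the one or two distinguished lines $\ell_0$ (and $\ell_1$). I would feed this into the incidence constraints of the pencil: a member of the pencil meets each of $\ell_0,\ell_1$ in boundedly many points, so only boundedly many base points can sit on $\ell_0\cup\ell_1$, while any two lines from distinct fibers are forced to meet at a base point, hence on $\ell_0\cup\ell_1$. Pushing the resulting collinearity relations---much as in the multinet bookkeeping recalled in \S\ref{subsec:line arr}, and invoking the structural dichotomy of Theorem~\ref{thm:qp cv}\eqref{a1} and~\eqref{a4}---I expect to conclude that the fibers degenerate to pencils of concurrent lines through a single multiple point, so that the only positive-dimensional component produced is the local subtorus through~$1$. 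This contradicts $\rho\neq1$ and excludes the translated component.

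The main obstacle, I anticipate, is precisely this last passage in the genuinely translated case. For components through the origin the count is governed by the classical multinet axioms; but a translated component corresponds to a pencil with a nontrivial orbifold weighting, and I must verify that confining all base points to one or two lines destroys exactly the multiple-fiber data responsible for the torsion translation---rather than merely reducing the number of base points. Turning this ``degeneration forces triviality of the orbifold structure'' heuristic into a statement valid uniformly in the number of lines and the degree of the pencil is where the real work will lie; the reduction in the first paragraph and the structural input of Theorems~\ref{thm:arapura} and~\ref{thm:qp cv} are comparatively routine.
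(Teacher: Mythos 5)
Your reduction is exactly the one the paper uses: the only nontrivial point is that, under the hypothesis, $\VV^1(\A)$ has no positive-dimensional translated components, after which Theorem \ref{thm:df arr}\eqref{ar3} (i.e., Proposition \ref{prop:arr straight}\eqref{p2} plus Theorem \ref{thm:straight omega}) gives the equality for all $r$. The difference is in how that key fact is obtained. The paper does not prove it; it cites the theorem of Nazir and Raza \cite{NR09}, which establishes precisely that for arrangements whose points of multiplicity $\ge 3$ lie on one or two lines, the first characteristic variety has no translated components. You, by contrast, attempt to derive this from scratch via orbifold pencils and multinet-style incidence counting, and you explicitly concede that the decisive step---showing that confining all base points to $\ell_0\cup\ell_1$ kills the multiple-fiber (orbifold) data responsible for a torsion translation---remains to be done. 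As it stands, this is a genuine gap: the statement you defer is not a routine verification but is essentially the entire content of the Nazir--Raza theorem.

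Beyond being incomplete, the combinatorial sketch leans on a premise that is not available in the translated case. The rule ``any two lines from distinct fibers meet at a base point'' is part of the multinet axiomatics governing components of $\VV^1(\A)$ through the origin, where every line of the relevant sub-arrangement sits inside some fiber of the pencil. For a translated component $W=\rho\,T$ with $\rho\ne 1$, the associated orbifold pencil need not have all its special fibers supported on arrangement lines (in the deleted $\operatorname{B}_3$ example of \S\ref{sec:arrs}, the relevant pencil has fibers involving non-linear curves and non-reduced multiple fibers), so the collinearity bookkeeping you propose does not directly constrain the arrangement the way it does for multinets. Repairing this would require either reworking the argument with the full orbifold-pencil structure theory (in the spirit of \cite{ACM}) or simply invoking \cite{NR09}, which is what the paper does.
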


\begin{proof}
As shown by Nazir and Raza in \cite{NR09},  the 
characteristic variety $\VV^1(\A)$ of such an 
arrangement has no translated components.  
The conclusion follows from Theorem \ref{thm:df arr}.
\end{proof}

In general, though, the ``resonance upper bound" for 
the Dwyer--Fried invariants of arrangements is not attained. 
We illustrate this claim with the smallest possible example.

\begin{example}
\label{ex:deleted B3}
Let $\A$ be the deleted $\operatorname{B}_3$ 
arrangement, with defining polynomial 
$Q(\A)=z_0z_1(z_0^2-z_1^2)(z_0^2-z_2^2)(z_1^2-z_2^2)$. 
The jump loci of $\A$ were computed in \cite{S02}.  Briefly, 
the resonance variety $\RR^1(\A)\subset \C^8$ contains $7$ 
local components, corresponding to $6$ triple points and one 
quadruple point, and $5$ non-local components, corresponding 
to braid sub-arrangements.  In particular, $\codim \RR^1(\A)=5$.  

In addition to the $12$ subtori arising from the subspaces  
in $\RR^1(\A)$, the characteristic variety $\VV^1(\A)\subset 
(\C^{\times})^8$ also contains a component of the form 
$\rho\cdot T$, where $T$ is a $1$-dimensional algebraic 
subtorus, and $\rho$ is a root of unity of order $2$.  

Let $X=X(\A)$ be complement of the arrangement. 
Then $X$ is formal, yet not $1$-straight.  
Moreover, the hypothesis of Theorem \ref{thm:tt} are satisfied 
for $X$.  We conclude that $\Omega^1_2(\A)$ is strictly 
contained in $\sigma_2(\RR^1(\A))^{\compl}$. 
\end{example}

\section{Acknowledgements}
\label{sec:ack}

An incipient version of this work was presented at the 
Mathematical Society of Japan Seasonal Institute 
on {\em Arrangements of Hyperplanes}, held at 
Hokkaido University in August 2009. I wish to 
thank the organizers for the opportunity 
to participate in such an interesting meeting, 
and for their warm hospitality.

A fuller version of this work was presented  at the 
Centro di Ricerca Matematica Ennio De Giorgi in Pisa, in 
May--June 2010.  I  wish to thank the organizers of the 
Intensive Research Period on {\em Configuration Spaces: 
Geometry, Combinatorics and Topology}\/ for their friendly 
hospitality, and for providing an inspiring mathematical environment.

Finally, I am grateful to Stefan Papadima for many 
illuminating discussions on the topics presented here, and to 
Graham Denham for help with the proof of Theorem \ref{thm:tt}.



\begin{thebibliography}{99}

\newcommand{\arxiv}[1]
{\texttt{\href{http://arxiv.org/abs/#1}{arXiv:#1}}}

\newcommand{\arx}[1]
{\texttt{\href{http://arxiv.org/abs/#1}{arXiv:}}
\texttt{\href{http://arxiv.org/abs/#1}{#1}}}

\renewcommand{\MR}[1]
{\href{http://www.ams.org/mathscinet-getitem?mr=#1}{MR#1}}

\bibitem{Ar} D.~Arapura, 
{\em Geometry of cohomology support loci for local systems.  
\textup{I}.}, J. Algebraic Geom. \textbf{6} (1997), no.~3, 563--597.  
\MR{1487227}

\bibitem{ACM} E.~Artal Bartolo, J.~I.~Cogolludo-Agust\'{\i}n, D.~Matei, 
{\em Characteristic varieties of quasi-projective manifolds 
and orbifolds}, \arxiv{1005.4761}.

\bibitem{BB} M.~Bestvina, N.~Brady, 
{\em Morse theory and finiteness properties of groups}, 
Invent. Math. \textbf{129} (1997), no.~3, 445--470. 
\MR{1465330}

\bibitem{Bu}  N.~Budur, 
{\em Unitary local systems, multiplier ideals, and polynomial 
periodicity of Hodge numbers}, Adv. Math. \textbf{221} (2009), 
no.~1, 217--250.
\MR{2509325}

\bibitem{CO00}  D.~Cohen, P.~Orlik,  
Arrangements and local systems, Math. Res. Lett. 
\textbf{7} (2000), no.~2-3, 299--316.
\MR{1764324}

\bibitem{CS99} D.~Cohen, A.~Suciu, 
{\em Characteristic varieties of arrangements},
Math. Proc. Cambridge Phil. Soc. \textbf{127} (1999), 
no.~1, 33--53. 
\MR{1692519}

\bibitem{DGMS}  P.~Deligne, P.~Griffiths, J.~Morgan, D.~Sullivan,
{\em Real homotopy theory of {K}\"{a}hler manifolds},
Invent. Math. \textbf{29} (1975), no.~3, 245--274.
\MR{0382702}

\bibitem{De} G.~Denham, 
{\em Homology of subgroups of right-angled Artin groups}, 
\arx{math/0612748}.

\bibitem{Di07} A.~Dimca, 
{\em On the irreducible components of characteristic varieties}, 
An. \c{S}tiin\c{t}. Univ. ``Ovidius" Constan\c{t}a Ser. Mat. 
\textbf{15} (2007), no.~1, 67--74. 
\MR{2396745}  

\bibitem{DPS-imrn} A.~Dimca, S.~Papadima, A.~Suciu,
{\em Alexander polynomials: {E}ssential variables and 
multiplicities}, Int. Math. Res. Notices \textbf{2008}, 
no.~3, Art. ID rnm119, 36 pp. 
\MR{2416998} 

\bibitem{DPS-duke} A.~Dimca, S.~Papadima, A.~Suciu,
{\em Topology and geometry of cohomology jump loci}, 
Duke Math. J. \textbf{148} (2009), no.~3, 405--457.
\MR{2527322} 

\bibitem{DPS-mz} A.~Dimca, S.~Papadima, A.~Suciu,
{\em Quasi-{K}\"{a}hler groups, $3$-manifold groups, 
and formality}, Math. Zeit. \textbf{268} (2011), 
no.~1-2, 169--186. 
 \MR{2805428}
 
\bibitem{DF} W.~G.~Dwyer, D.~Fried,
{\em Homology of free abelian covers. \textup{I}}, Bull. 
London Math. Soc. \textbf{19} (1987), no.~4, 350--352.
\MR{0887774}

\bibitem{Ei}  D.~Eisenbud,
{\em Commutative algebra with a view towards algebraic geometry},
Grad. Texts in Math., vol.~150, Springer-Verlag, New~York, 1995.
\MR{1322960}

\bibitem{EPY} D.~Eisenbud, S.~Popescu, S.~Yuzvinsky, 
{\em Hyperplane arrangement cohomology and monomials in 
the exterior algebra}, Trans. Amer. Math. Soc. \textbf{55} (2003), 
no.~11, 4365--4383. 
\MR{1986506}

\bibitem{ESV} H.~Esnault, V.~Schechtman, E.~Viehweg,
{\em Cohomology of local systems of the complement of 
hyperplanes}, Invent. Math. \textbf{109} (1992), no.~3, 
557--561. \MR{1176205}

\bibitem{Fa97} M.~Falk,
{\em Arrangements and cohomology},
Ann. Combin. \textbf{1} (1997), no.~2, 135--157.  
\MR{1629681}

\bibitem{Fa04} M.~Falk, 
{\em The line geometry of resonance varieties}, 
\arxiv{math/0405210}.

\bibitem{FY} M.~Falk, S.~Yuzvinsky,
{\em Multinets, resonance varieties, and pencils of plane curves},
Compositio Math. \textbf{143} (2007), no.~4, 1069--1088.
\MR{2339840} 

\bibitem{Har} J.~Harris, 
{\em Algebraic geometry}, Grad. Texts in Math, 
vol.~133, Springer-Verlag, New York, 1992. 
\MR{1182558} 

\bibitem{Li02}  A.~Libgober,
{\em First order deformations for rank one local systems 
with a non-vanishing cohomology}, Topology Appl. 
\textbf{118} (2002), no.~1-2, 159--168. 
\MR{1877722}

\bibitem{Mo}  J.~W.~Morgan,
{\em The algebraic topology of smooth algebraic varieties},
Inst. Hautes \'{E}tudes Sci. Publ. Math. \textbf{48} (1978), 137--204.
\MR{0516917}  

\bibitem{NR09} S.~Nazir, Z.~Raza, 
{\em Admissible local systems for a class of line arrangements}, 
Proc. Amer. Math. Soc. \textbf{137} (2009), no.~4, 1307--1313. 
\MR{2465653}

\bibitem{NR05} D.~Notbohm, N.~Ray,
{\em On Davis-Januszkiewicz homotopy types. I. 
formality and rationalisation},
Algebr. Geom. Topol. \textbf{5} (2005), 31--51. 
\MR{2135544}  

\bibitem{PS-mathann} S.~Papadima, A.~Suciu, 
{\em Algebraic invariants for right-angled {A}rtin groups}, 
Math. Annalen, \textbf{334} (2006), no.~3, 533--555. 
\MR{2207874}

\bibitem{PS-adv} S.~Papadima, A.~Suciu,
{\em Toric complexes and {A}rtin kernels}, Adv. Math. 
\textbf{220} (2009), no.~2, 441--477.
\MR{2466422}

\bibitem{PS-bmssmr} S.~Papadima, A.~Suciu,
{\em Geometric and algebraic aspects of $1$-formality}, 
Bull. Math. Soc. Sci. Math. Roumanie \textbf{52} (2009), 
no.~3,  355--375. 
\MR{2554494}

\bibitem{PS-tams} S.~Papadima, A.~Suciu,
{\em The spectral sequence of an equivariant chain 
complex and homology with local coefficients}, 
Trans. Amer. Math. Soc. \textbf{362} (2010), 
no.~5, 2685--2721.
\MR{2584616}

\bibitem{PS-plms} S.~Papadima, A.~Suciu,
{\em Bieri--{N}eumann--{S}trebel--{R}enz invariants and 
homology jumping loci}, Proc.~London Math.~Soc. 
\textbf{100} (2010), no.~3, 795--834.
\MR{2640291}

\bibitem{PeY} J.~Pereira, S.~Yuzvinsky, 
{\em Completely reducible hypersurfaces in a pencil}, 
Adv.  Math.  \textbf{219} (2008), no.~2, 672--688.
\MR{2435653}  

\bibitem{S01} A.~Suciu,
{\em Fundamental groups of line arrangements: 
Enumerative aspects}, in: Advances in algebraic geometry 
motivated by physics (Lowell, MA, 2000), pp. 43--79, Contemp. 
Math., vol.~276, Amer. Math. Soc., Providence, RI, 2001.
\MR{1837109}

\bibitem{S02} A.~Suciu,
{\em Translated tori in the characteristic varieties of
complex hyperplane arrangements}, Topology Appl. 
\textbf{118} (2002), no.~1-2, 209--223.  
\MR{1877726}

\bibitem{S09} A.~Suciu,
{\em Fundamental groups, Alexander invariants, and 
cohomology jumping loci},  in: Topology of 
algebraic varieties and singularities, pp. 179--223, 
Contemp. Math., vol.~538, Amer. Math. Soc., 
Providence, RI, 2011. 
\MR{2777821}

\bibitem{Su} A.~Suciu,
{\em Characteristic varieties and Betti numbers of free 
abelian covers}, preprint, 2011.

\bibitem{Su77}  D.~Sullivan,
{\em Infinitesimal computations in topology},
Inst. Hautes \'{E}tudes Sci. Publ. Math.
\textbf{47} (1977), 269--331.
\MR{0646078}  

\bibitem{Yu} S.~Yuzvinsky, 
{\em A new bound on the number of special fibers in a 
pencil of curves},  Proc. Amer. Math. Soc. \textbf{137} (2009), 
no.~5, 1641--1648. \MR{2470822} 


\end{thebibliography}
\end{document}